\pgfplotsset{compat=1.14}
\theoremstyle{theorem}
\newtheorem{theorem}{Theorem}[section]
\theoremstyle{definition}
\newtheorem{definition}[theorem]{Definition}
\theoremstyle{definition}
\newtheorem{example}[theorem]{Example}
\theoremstyle{remark}
\newtheorem{remark}[theorem]{Remark}
\theoremstyle{theorem}
\newtheorem{proposition}[theorem]{Proposition}
\theoremstyle{theorem}
\newtheorem{lemma}[theorem]{Lemma}
\theoremstyle{theorem}
\newtheorem{corollary}[theorem]{Corollary}
\newcounter{exocmpt}[section]
\newcounter{partcmpt}[exocmpt]
\newcounter{questioncmpt}[exocmpt]
\newcounter{subquestioncmpt}[questioncmpt]
\theoremstyle{definition}
\newcommand{\cp}{\mathbb{C}\mathbb{P}}
\newcommand{\cmplx}{\mathbb{C}}
\newcommand{\real}{\mathbb{R}}
\newcommand{\ante}{^{-1}}
\newcommand{\transp}{{}^{t}}
\newcommand{\orth}{^{\bot}}
\newcommand{\etoile}{^{\ast}}
\newcommand{\class}{\mathcal{C}}
\newcommand{\ellipse}{\mathcal{E}}
\newcommand{\inftyline}{\overline{\cmplx}_{\infty}}
\newcommand{\cat}{\text{Cat}}
\begin{document}

\title{Complex caustics of the elliptic billiard}
\author{}
\date\today
\maketitle

\noindent Corentin FIEROBE, École Normale Supérieure de Lyon, Unité de Mathématiques Pures et Appliquées, UMR CNRS 5669, 46, allée d’Italie, 69364 Lyon Cedex 07, France ; \textit{e-mail : corentin.fierobe@ens-lyon.fr}

\tableofcontents
\newpage

\section{Introduction}

It is a well known fact that on a real ellipse, a billiard trajectory remains tangent to the same confocal conic (ellipse or hyperbola) after successive bounces. This is true in particular for periodic orbits. See for exemple \cite{tabachnikovgeometry}, Chapter $4$.

In this present paper, we study the caustics of a complex reflection law, which will be defined later (in Section \ref{sec_complex_reflection_law}). For now, one should just know that it is a natural generalization of the real law and that it was used to prove important results. More precisely, let $q$ be the non-degenerate complex quadratic form $q(x,y)=x^2+y^2$ on $\cmplx^2$. Then to each complex line $L$ passing through a point $A$, one can associate its orthogonal line (for $q$) $L'$ passing through $A$. Then in the case when $L\neq L'$ (non-isotropic case) we can define the reflection of lines passing through $A$ as the usual way. In the case when $L=L'$, the reflection of lines is defined as a limit of lines reflected in non-isotropic cases. We give more details in Section \ref{sec_complex_reflection_law}.

It was introduced by Glutsyuk in order to study Ivrii's conjecture on periodic orbits together with its analogues for pseudo-billiards and complex billiards (\textit{cf} \cite{glut1, glut2, glut3}). Another use of it was made by Romaskevich in \cite{romaskevich2014}, to prove that the set of incenters of triangular orbits in an ellipse is also an ellipse. 

The idea to approach a real problem by studying its extension to complex domain is not new. For example a proof of Poncelet theorem was given in \cite{poncGH} by generalizing it to complex caustics in $\cp^2$. Note that Poncelet's theorem was also studied on other fields, see for example \cite{JC17} and \cite{HK2014}. Since it is a key point in this article, let us recall it.

\begin{theorem}[Poncelet, \cite{poncGH}]
Let $C$ and $D$ be two conics of $\cp^2$ intersecting transversally. If there is a $n$-sided polygon inscribed in $C$ and circumscribed about $D$, then for each point $p$ of $C$ there is such a polygon having $p$ as a vertex.
\end{theorem}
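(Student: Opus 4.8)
The plan is to recast the Poncelet process as the iteration of a single automorphism of an auxiliary elliptic curve, and then to exploit the fact that this automorphism is a translation: closure of one polygon becomes a group-theoretic condition that is independent of the starting point.

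First I would introduce the incidence curve
\[
E = \{(p,\ell) : p \in C,\ \ell \text{ a line tangent to } D,\ p \in \ell\},
\]
together with its two projections $\pi_C(p,\ell)=p$ onto $C$ and $\pi_D(p,\ell)=\ell$ onto the dual conic $\check{D}$. Each projection is a degree-two map onto a smooth rational curve, since $C\cong\cp^1$ and $\check D\cong\cp^1$: over a generic $p\in C$ there are exactly two tangents to $D$, and over a generic tangent $\ell$ there are exactly two points of $\ell\cap C$. The ramification of $\pi_C$ occurs precisely where the two tangents from $p$ coincide, i.e.\ where $p\in D$; by transversality and Bézout this is the set $C\cap D$ of four simple points. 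Next I would compute the genus of $E$. Transversality of $C$ and $D$ guarantees that $E$ is smooth, and Riemann--Hurwitz applied to the degree-two cover $\pi_C : E\to C\cong\cp^1$ with four simple branch points gives $2g(E)-2 = 2(-2)+4 = 0$, so $g(E)=1$ and $E$ is an elliptic curve.

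On $E$ there are two natural involutions, namely the deck transformations $\tau$ of $\pi_C$ (fix $p$, swap the two tangents through it) and $\sigma$ of $\pi_D$ (fix $\ell$, swap the two points of $\ell\cap C$). Both are nontrivial holomorphic involutions possessing fixed points: $\sigma$ fixes the pairs lying over the common tangents to $C$ and $D$, and $\tau$ fixes those lying over $C\cap D$. Here lies the heart of the argument. After choosing an origin and the induced group law on $E\cong\cmplx/\Lambda$, any holomorphic automorphism has the form $x\mapsto \alpha x+\beta$ with $\alpha\Lambda=\Lambda$; an involution forces $\alpha^2=1$, hence $\alpha=\pm1$, and the existence of a fixed point rules out $\alpha=1$, a nonzero translation being fixed-point free. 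Therefore $\sigma(x)=-x+c_1$ and $\tau(x)=-x+c_2$ for constants $c_1,c_2$, and the Poncelet map $T=\tau\circ\sigma$ satisfies $T(x)=x+(c_2-c_1)$, i.e.\ $T$ is translation by a fixed element $a\in E$.

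Tracking the construction, $T$ sends a pair (vertex, outgoing edge) to the next such pair along a polygon inscribed in $C$ and circumscribed about $D$; the polygon closes after $n$ steps precisely when $T^n$ fixes the starting pair, i.e.\ when $na=0$ in the group law. Since this condition is manifestly independent of the starting point, the existence of a single closed $n$-gon yields $na=0$, hence $T^n=\mathrm{id}$ on all of $E$, and so an $n$-gon closes up starting from every $p\in C$. I expect the main obstacle to be the geometric bookkeeping that establishes $E$ as a smooth genus-one curve and correctly identifies the ramification loci of the two projections — this is exactly where the transversality hypothesis is essential — rather than the final translation argument, which is formal once the elliptic-curve structure and the two fixed-point-bearing involutions are in place.
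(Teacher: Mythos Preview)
Your argument is correct and is precisely the Griffiths--Harris elliptic-curve proof. Note, however, that the paper does not supply its own proof of Poncelet's theorem: the statement appears in the introduction with a citation to \cite{poncGH}, and the theorem is used only as a black box (in the proof of Theorem~\ref{main_theorem_orbits}, third item). So there is no in-paper proof to compare against; what you have written is essentially the argument of the cited reference.

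One small point worth tightening: you assert that $\sigma$ has fixed points because $C$ and $D$ have common tangents. Transversality of $C\cap D$ does not by itself force the dual conics $\check C,\check D$ to meet transversally, but you only need \emph{one} common tangent for $\sigma$ to have a fixed point, and this follows from B\'ezout applied to the (necessarily smooth, necessarily distinct) dual conics. Everything else --- smoothness of $E$, the Riemann--Hurwitz count, the classification of involutions with fixed points on an elliptic curve, and the translation conclusion for $T=\tau\circ\sigma$ --- is standard and correctly invoked.
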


Billards with other reflection laws were already studied, see \cite{khesin} for an introduction to this topic, and \cite{ADR2019} for a precise study in the Minkovski settings. Furthermore, new discoveries about the elliptic billard are made even recently, see for example Reznik's github page \cite{reznik_github}, and \cite{AST,BT} for mathematical proofs of some of its experimental results.

We denote by $\class_{\lambda}$ the (complex) conic given in the affine chart $(x,y)$ by equation 
$$\mathcal{C}_{\lambda}:\,\frac{x^2}{a^2+\lambda}+\frac{y^2}{b^2+\lambda}=1$$
where $x,y\in\cmplx^2$ and $\lambda\in\cmplx\setminus\{-a^2, -b^2\}$ ; let us also define $\ellipse = \class_0$ that is
$$\ellipse:\,\frac{x^2}{a^2}+\frac{y^2}{b^2}=1.$$
We first prove the following theorem (see Fig. \ref{fig_main_theorem_caustics}):

\begin{theorem}[Complex caustics]
\label{main_theorem_caustics}
Let $T=(M_0,\ldots,M_n)$ be a $(n+1)-$uplet of points of $\ellipse$ such that any two consecutive points are dictinct. The following statements are equivalent:
\begin{itemize}
	\item there is a $\lambda$, such that $a^2+\lambda,b^2+\lambda\neq0$ and for each $j\in\{1,\ldots,n-1\}$ the lines $M_{j-1}M_j$ and $M_jM_{j+1}$ realize the two tangent lines to $\class_{\lambda}$ going through $M_j$;
	\item $T$ is a non-degenerate non-isotropic piece of complex billiard trajectory whose sides do not contain the real nor the complex foci.
\end{itemize}
If this $\lambda$ exists, then it is unique.
\end{theorem}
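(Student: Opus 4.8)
The plan is to attach to every admissible line a single scalar — its \emph{caustic parameter} — and to show that the complex reflection law is exactly the involution preserving this scalar. The starting point is that the tangential (dual) equation of $\class_{\lambda}$ is \emph{linear} in $\lambda$: writing $[u:v:w]$ for the line $ux+vy=w$, such a line is tangent to $\class_{\lambda}$ if and only if $(a^2+\lambda)u^2+(b^2+\lambda)v^2=w^2$, that is
$$Q_0(\ell)+\lambda\,Q_\infty(\ell)=0,\qquad Q_0:=a^2u^2+b^2v^2-w^2,\quad Q_\infty:=u^2+v^2.$$
Whenever $Q_\infty(\ell)\neq0$ — precisely when $\ell$ is neither isotropic nor passes through a real or complex focus — this determines a unique value
$$J(\ell):=-\frac{Q_0(\ell)}{Q_\infty(\ell)},$$
and $\ell$ is tangent to $\class_{\lambda}$ iff $\lambda=J(\ell)$. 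This already yields the uniqueness clause: along an admissible $T$ the candidate $\lambda$ must equal $J(M_0M_1)$, while $a^2+\lambda,b^2+\lambda\neq0$ is exactly the requirement that $\class_{\lambda}$ be a genuine conic.

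Both implications then reduce to one local statement, which I regard as the heart of the argument. \textbf{Reflection Lemma.} Fix $M\in\ellipse$ off the focal lines. On the pencil of lines through $M$ (isomorphic to $\proj^1$), the complex reflection law at $M$ coincides with the \emph{deck involution} $\sigma$ of $J$, i.e. the involution exchanging the two tangents drawn from $M$ to a common $\class_{\lambda}$. Granting the lemma, the theorem follows quickly. If $T$ is a billiard trajectory, then at each vertex the outgoing side is the reflection of the incoming one, so $\sigma(M_{j-1}M_j)=M_jM_{j+1}$; since $\sigma$ preserves $J$ by construction, $J$ is constant along $T$, whence all sides are tangent to a single $\class_{\lambda}$ and the two sides at $M_j$ are its two tangents from $M_j$. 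Conversely, if such a $\lambda$ exists, the two sides at each $M_j$ form a $\sigma$-pair by hypothesis, hence a reflection pair; the excluded configurations (degenerate, isotropic, focal) are exactly those on which $J$ or $\sigma$ fails to be defined.

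To prove the lemma I would restrict $Q_0,Q_\infty$ to the pencil through $M=(x_0,y_0)$, parametrising it by the slope $t$ so that $Q_\infty=1+t^2$ and $Q_0=(a^2-x_0^2)-2x_0y_0\,t+(b^2-y_0^2)t^2$. Then $J=[Q_0:Q_\infty]\colon\proj^1\to\proj^1$ is a degree-two map, and its deck involution $\sigma$ is the unique involution fixing the two ramification points. A direct computation of the Wronskian $Q_0'Q_\infty-Q_0Q_\infty'$ shows that these ramification points are the roots of
$$x_0y_0\,t^2+(b^2-a^2+x_0^2-y_0^2)\,t-x_0y_0=0.$$
Using $M\in\ellipse$ one checks that one root is the tangent direction to $\ellipse$ at $M$, and, since the product of the roots is $-1$, the second root is the $q$-orthogonal direction, i.e. the normal line at $M$. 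The complex reflection law, on the other hand, fixes by definition exactly the tangent and the normal at $M$. As an involution of $\proj^1$ is determined by its pair of fixed points, the two involutions agree.

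The main obstacle I anticipate is precisely this fixed-point identification together with a careful treatment of the isotropic and degenerate strata: one must confirm that the reflection law truly has only the tangent and the normal as fixed lines — including through the limiting definition used in the isotropic case — and that the excluded foci are exactly the base locus $Q_\infty=0$, so that the dictionary between ``reflection'' and ``same $J$'' holds throughout the admissible range and nowhere breaks down (in particular that $Q_0,Q_\infty$ share no root on the pencil for admissible $M$, keeping $J$ genuinely of degree two).
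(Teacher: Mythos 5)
Your core idea is sound and genuinely different from both proofs in the paper, while being secretly close to each of them: your $J(\ell)$ is exactly the coordinate on the pencil of dual conics used in Proposition \ref{prop:dual_line_conics}, and for a line $\ell$ through $M\in\ellipse$ directed by $v$ one has $J(\ell)=-(ab)^2P(M,v)$, the invariant of Section \ref{sec_invariant}. Where the paper either invokes Zariski density of the real locus (Corollary \ref{cor:zariski_billard}) or a matrix computation plus case analysis (Proposition \ref{prop_invariant}, Lemmas \ref{lemma_carac_billiard} and \ref{lemma_carac_billiard_2}), you identify the billiard reflection with the deck involution of the degree-two map $J$ on each pencil of lines by matching fixed points. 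Your Wronskian, the product of its roots being $-1$, and the identification of one root as the tangent direction are all correct, and a nontrivial M\"obius involution is indeed determined by its two fixed points; done carefully, this would be a legitimate third proof.

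There are, however, two genuine gaps. The first is your admissibility dictionary, which is wrong as stated and propagates through the argument: $Q_\infty(\ell)=0$ characterizes \emph{isotropic} lines only; it does not detect foci. A line through a real focus satisfies $w^2=c^2u^2$ with $c^2=a^2-b^2$, hence $Q_0=b^2Q_\infty$ and $J(\ell)=-b^2$; similarly $J(\ell)=-a^2$ for lines through complex foci. So focal lines are those where $J$ takes a \emph{forbidden value}, not those where $J$ is \emph{undefined}, and your closing claim that ``the excluded foci are exactly the base locus $Q_\infty=0$'' is false. The corrected dictionary is precisely what your proof needs: it makes $a^2+\lambda,b^2+\lambda\neq0$ equivalent to the sides avoiding foci (uniqueness clause and the converse implication), and it settles your degree-two worry, since $Q_0$ and $Q_\infty$ share a root on the pencil through $M$ if and only if an isotropic line through $M$ contains a focus, if and only if that line is one of the four common isotropic tangents, if and only if $T_M\ellipse$ is itself isotropic --- which is excluded by the billiard definition, and excluded under the first statement of the theorem because a common isotropic tangent meets $\ellipse$ at a single point and so cannot be a side joining two distinct vertices. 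For the same reason, the hypothesis ``$M$ off the focal lines'' in your Reflection Lemma is the wrong condition: the genuinely bad points (the isotropic tangency points \eqref{equ_tang_points}) are off the focal lines, while the points you exclude, such as $(\pm a,0)$ and $(0,\pm b)$, are harmless and occur as vertices of actual orbits.

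The second gap is that you never treat infinite vertices. The conic $\ellipse\subset\cp^2$ contains the two points $(a:\pm ib:0)$ of $\inftyline$, which the theorem allows as vertices and which really occur: the orbit $T_1^4$ of Section \ref{sec_4_orbits} passes through them. Your slope parametrization of the pencil and the tangent/normal fixed-point identification presuppose $M=(x_0,y_0)$ finite. At an infinite vertex the two lines fixed by the reflection are $T_M\ellipse$ and $\inftyline$ itself (the latter isotropic), so the bookkeeping of fibers and fixed points changes and a separate computation is required; this is exactly what the second case of Proposition \ref{prop_invariant} and Lemma \ref{lemma_carac_billiard_2} are for in the paper. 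As written, your argument proves the theorem only for trajectories all of whose vertices are finite.
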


\begin{figure}[!h]
\centering
\begin{tikzpicture}[line cap=round,line join=round,>=triangle 45,x=1cm,y=1cm]
\clip(-5,-2.1) rectangle (5,2.1);
\draw [rotate around={0:(0,0)},line width=1pt] (0,0) ellipse (2.8284271247461907cm and 2cm);
\draw [rotate around={0:(0,0)},line width=1pt] (0,0) ellipse (2.57838050536101cm and 1.627281791954208cm);
\draw [line width=1pt] (-2.822840787497323,-0.12563814795175854)-- (-1.869312067799702,1.500945101124356);
\draw [line width=1pt] (-1.869312067799702,1.500945101124356)-- (1.393051102240205,1.740604582687773);
\draw [line width=1pt] (1.393051102240205,1.740604582687773)-- (2.818498874725576,0.1674277354124261);
\draw [line width=1pt] (2.818498874725576,0.1674277354124261)-- (1.928469151211218,-1.463045920814977);
\begin{scriptsize}
\draw [fill=black] (-2,0) circle (2.5pt);
\draw[color=black] (-1.9906447176999233,0.4527745617943184) node {$F_1$};
\draw [fill=black] (2,0) circle (2.5pt);
\draw[color=black] (2.030966426407765,0.4773715718500229) node {$F_2$};
\draw[color=black] (-2.1013312629505934,1.7) node {$M_1$};
\draw[color=black] (1.4898322051822654,1.9) node {$M_2$};
\draw[color=black] (3.2,0.2) node {$M_3$};
\draw[color=black] (-3.2,-0.1) node {$M_0$};
\draw[color=black] (2.030966426407765,-1.7) node {$M_4$};
\draw[color=black] (-1,-1.3) node {$\class_{\lambda}$};
\draw[color=black] (0,-1.85) node {$\ellipse$};
\end{scriptsize}
\end{tikzpicture}
\caption{The confocal caustic $\class_{\lambda}$ inscribed in a piece of billiard trajectory.}
\label{fig_main_theorem_caustics}
\end{figure}
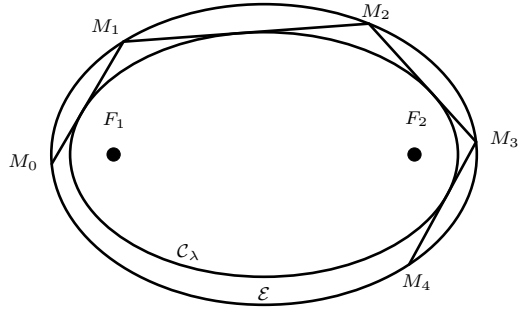

The definition of non-degenerate and non-isotropic complex billiard orbits will be defined later, in Subsection \ref{subsec_def_billiard}. For now, the reader can think about them as "good" orbits. Thus Theorem \ref{main_theorem_caustics} generalizes the theorem of the existence of caustics in the elliptic billiard to its complexification.

For periodic orbits, we improve Theorem \ref{main_theorem_caustics} as follows:

\begin{theorem}
\label{main_theorem_orbits}
Fix $n\geq 3$. There exist $N$ confocal conics $\gamma^n_1,\ldots,\gamma^n_N$ satisfying the following properties:
\begin{enumerate}
	\item any $n-$sided polygon inscribed in $\ellipse$, circumscribed about a $\gamma^n_j$ and having its consecutive vertices distinct, is a periodic billiard orbit (which is non-degenerate non-isotropic and non-flat);
	\item any non-degenerate non-isotropic periodic billiard orbit which doesn't pass through a (complex or real) focus of $\ellipse$ is circumscribed about one of the $\gamma^n_j$;
	\item for each point $p$ of $\ellipse$ and each $j$, one can find an $n-$sided polygon inscribed in $\ellipse$ and circumscribed about a $\gamma^n_j$, having $p$ as a vertex.
\end{enumerate}
Furthermore, $N\leq n^2/4$ if $n$ is even, and $N\leq (n^2-1)/4$ if $n$ is odd. For $n=3$ one has $N=2$. For $n=4$, $N=3$ in the case when $a\neq\sqrt{2}b$ and $N=2$ otherwise. 
\end{theorem}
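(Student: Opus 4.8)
The plan is to translate the statement into a closure problem for Poncelet polygons and then to count the admissible caustics. By Theorem \ref{main_theorem_caustics}, a non-degenerate non-isotropic periodic orbit of period $n$ whose sides avoid the (real and complex) foci is exactly an $n$-gon inscribed in $\ellipse$, circumscribed about a \emph{unique} confocal conic $\class_{\lambda}$, with distinct consecutive vertices; conversely, such a polygon is a billiard orbit. I would therefore define the candidate caustics $\gamma^n_1,\ldots,\gamma^n_N$ to be precisely those $\class_{\lambda}$ (with $a^2+\lambda,b^2+\lambda\neq 0$) for which a closed $n$-gon inscribed in $\ellipse$ and circumscribed about $\class_{\lambda}$ exists, after discarding the degenerate, isotropic, flat or focal configurations. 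With this definition property (2) is immediate from Theorem \ref{main_theorem_caustics}, since every admissible periodic orbit has its caustic in this list; property (3) follows from Poncelet's theorem applied to $C=\ellipse$ and $D=\gamma^n_j$, because once one closed $n$-gon exists for this pair, every point of $\ellipse$ is the vertex of such a polygon; and property (1) reduces to checking that these Poncelet polygons obey the reflection law (the converse direction of Theorem \ref{main_theorem_caustics}, applied at each vertex) and, for the retained values of $\lambda$, are non-degenerate, non-isotropic and non-flat. The excluded values of $\lambda$ are simply not counted in $N$.

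To obtain finiteness together with the explicit list, I would linearise the Poncelet dynamics on the Griffiths--Harris elliptic curve $\Gamma_\lambda$, namely the double cover of $\ellipse\cong\proj^1$ branched along the four transversal intersection points $\ellipse\cap\class_{\lambda}$ (distinct for $\lambda\notin\{0,-a^2,-b^2\}$). On $\Gamma_\lambda$ each step of the polygon is a translation by a point $t(\lambda)$, and the $n$-gon closes if and only if $n\,t(\lambda)=0$, i.e. $t(\lambda)\in\Gamma_\lambda[n]$. Equivalently, and more amenable to counting, I would use Cayley's criterion: writing $\sqrt{\det(x\,\ellipse+\class_{\lambda})}=\sum_{k\ge 0}c_k(\lambda)\,x^k$ with the matrices of $\ellipse$ and $\class_{\lambda}$ in the confocal pencil, closure of an $n$-gon is the vanishing of an explicit Hankel determinant in the $c_k(\lambda)$ of size $\lceil n/2\rceil-1$, a single polynomial equation $P_n(\lambda)=0$. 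Since the $c_k(\lambda)$ are algebraic functions of $\lambda$ through the pencil, $P_n$ is a nonzero polynomial, so its zero set is finite; this yields the finite list and finiteness of $N$.

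The hard part will be the sharp count, i.e. bounding the number of admissible roots of $P_n$. I would estimate $\deg P_n$ by controlling how the Taylor coefficients $c_k(\lambda)$ depend on $\lambda$ (rational functions with poles prescribed by the confocal pencil) together with the Hankel structure, cross-checking against the elliptic-curve heuristic: the group $\Gamma_\lambda[n]$ has $n^2$ points, and the orientation-reversing involution $t\mapsto -t$ together with the residual symmetry of the confocal configuration identifies torsion points in orbits of size (generically) four, which accounts for the factor $\tfrac14$ and gives $N\le n^2/4$ for $n$ even and $N\le(n^2-1)/4$ for $n$ odd once the $2$-torsion (degenerate or flat polygons, and, for $n$ odd, torsion met only at $0$) is removed. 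For the small cases I would compute $P_n$ directly: for $n=3$ the criterion is the single quadratic $c_2(\lambda)=0$, both of whose roots are admissible, so $N=2$; for $n=4$ it is the cubic $c_3(\lambda)=0$, whose three roots are distinct generically but of which two coincide precisely when $a=\sqrt{2}\,b$, so $N=3$ or $N=2$. The two delicate points I anticipate are (i) identifying exactly which torsion configurations must be discarded as isotropic, flat, degenerate or focal, so that the upper bound is not overcounted, and (ii) making the factor-$\tfrac14$ reduction rigorous uniformly in $n$ while recognising the coincidence locus $a=\sqrt2\,b$ as the collision of two admissible caustics.
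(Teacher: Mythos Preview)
Your overall architecture is the same as the paper's: reduce to Poncelet closure via Theorem \ref{main_theorem_caustics}, invoke Cayley's criterion to get a polynomial $P_n(\lambda)$ whose roots are the candidate caustic parameters, bound its degree, and compute directly for $n=3,4$. Properties (1)--(3) are handled exactly as you say.

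There are, however, two genuine gaps. First, you do not actually carry out the degree bound. The paper does this concretely: writing $g(t)=\sqrt{(1+\tfrac{a^2+\lambda}{a^2}t)(1+\tfrac{b^2+\lambda}{b^2}t)(1+t)}=\sum B_k(\lambda)t^k$, one checks from the explicit formula $B_k(\lambda)=\sum_{u+v+w=k}c_uc_vc_wa^{-2u}b^{-2v}(a^2+\lambda)^u(b^2+\lambda)^v$ that $\deg_\lambda B_k\le k$, and then the Hankel determinant $\det(B_{i+j})_{1\le i,j\le m}$ has degree at most $\sum_j(\sigma(j)+j)=m(m+1)=(n^2-1)/4$ for $n=2m+1$ odd (similarly $m^2=n^2/4$ for $n=2m$ even). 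Your elliptic-curve heuristic does \emph{not} substitute for this: the $n^2$ torsion points of $\Gamma_\lambda[n]$ live on a single curve for a fixed $\lambda$, whereas you need to count how many \emph{values of $\lambda$} make $t(\lambda)$ an $n$-torsion point of the varying curve $\Gamma_\lambda$. These are different counting problems, and the factor-$\tfrac14$ story, while suggestive, is not a proof.

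Second, your explanation of the $n=4$ case is not quite right. The cubic $\mathcal{B}^4(\lambda)$ has three roots $\lambda_1=-\tfrac{a^2b^2}{a^2-b^2}$, $\lambda_2=-\tfrac{a^2b^2}{a^2+b^2}$, $\lambda_3=\tfrac{a^2b^2}{a^2-b^2}$, always distinct when $a\neq b$. What happens at $a=\sqrt{2}\,b$ is not a collision of two roots but that $\lambda_1$ hits the forbidden value $-a^2$, so the corresponding $\class_{\lambda_1}$ degenerates and must be discarded; hence $N$ drops from $3$ to $2$.
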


\begin{remark}
Theorem \ref{main_theorem_orbits} gives a meaning to another result which can be found in \cite{DragRad2019}. In this paper, at pages 17-18, the authors use a similar method to ours coming from elliptic functions theory to give a classification of the real caustics of the elliptic billard. Similarly to us, the caustics are related to the roots of a specific polynomial, and in their case some on these roots do not correspond to any real caustics. For example in the case of triangular orbits, one can find two roots: one of them corresponds to the usual smaller confocal ellipse, the other one defines a confocal ellipse which is bigger than the billard table and cannot be a caustic for real triangular orbits by convexity. Our result allows to understand this other root also as another caustic, but for the reflection law extended to the complex domain.
\end{remark}

A rather interesting application of Theorem \ref{main_theorem_orbits} is the classification of 3-periodic degenerate orbits (defined as the limits of non-degenerate orbits) - which is related to a result in \cite{romaskevich2014}:

\begin{corollary}
There are exacty $8$ degenerate triangular orbits, given by an isotropic tangency point $\alpha$ of $\ellipse$ and the point $\beta$ of $\ellipse$ such that the line $\alpha\beta$ is tangent to a conic $\gamma^3_j$ and non-isotropic.
\end{corollary}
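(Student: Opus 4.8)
The plan is to obtain each degenerate triangular orbit as a limit of the Poncelet family of genuine triangular orbits circumscribed about one of the two caustics, and to locate exactly where that family can degenerate. By Theorem \ref{main_theorem_orbits} (which gives $N=2$ for $n=3$) together with its part (3), for each $j\in\{1,2\}$ the non-degenerate triangular orbits circumscribed about $\gamma^3_j$ form a one-parameter family, parametrised by the choice of a vertex $p\in\ellipse$. I would study the limit of such a triangle as the orbit ceases to be non-degenerate, that is, as two consecutive vertices collide or one side becomes isotropic.

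The key geometric input is that the common tangent lines of two distinct confocal conics are precisely the four isotropic tangent lines, those through the four (real or complex) foci; this holds because confocal conics share their foci and hence their isotropic tangents. First I would use this to show that the only possible degeneration of the family is the collision of two consecutive vertices at a point $\alpha\in\ellipse$ whose tangent to $\ellipse$ is also tangent to $\gamma^3_j$: when $M_{k-1},M_k\to\alpha$ the side $M_{k-1}M_k$ tends to the tangent of $\ellipse$ at $\alpha$, which must remain tangent to the caustic in the limit and is therefore one of the four common isotropic tangents. Hence $\alpha$ is one of the isotropic tangency points of $\ellipse$, and the collapsing side becomes the isotropic tangent there. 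The same remark shows a side cannot become isotropic without its endpoints colliding, since an isotropic line tangent to $\gamma^3_j$ is an isotropic tangent of $\ellipse$ and so meets $\ellipse$ only at $\alpha$.

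Next I would identify the surviving third vertex. From $\alpha$ there are exactly two tangents to $\gamma^3_j$: one is the isotropic tangent of $\ellipse$ at $\alpha$ (tangent to every confocal conic), and the other is non-isotropic, because $\alpha$ lies on $\ellipse$ and hence is not a focus, so it cannot lie on two isotropic tangents. This second, non-isotropic tangent meets $\ellipse$ again at a unique point $\beta$, and the limiting orbit is exactly the collapsed triangle whose sides are the isotropic tangent at $\alpha$ and the doubled segment $\alpha\beta$ --- the configuration in the statement. Conversely, part (3) of Theorem \ref{main_theorem_orbits} ensures that letting the free vertex tend to $\alpha$ yields genuine non-degenerate orbits converging to this configuration, so each such $(\alpha,\beta)$ is indeed a degenerate orbit.

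It then remains to count. There are exactly four isotropic tangency points $\alpha$ (two for each of the two isotropic directions $(1,\pm i)$) and two caustics $\gamma^3_1,\gamma^3_2$, and each pair $(\alpha,\gamma^3_j)$ produces one degenerate orbit by the construction above. Distinct $\alpha$ give distinct orbits, while for a fixed $\alpha$ the two caustics give different non-isotropic tangents and hence different $\beta$; the $4\times 2=8$ orbits are therefore pairwise distinct, giving exactly $8$. The step I expect to be the main obstacle is the limiting analysis of the second paragraph: rigorously controlling the degeneration of the Poncelet family and excluding any further degenerate limit (such as all three vertices colliding), which is exactly where the characterisation of the common tangents of confocal conics is essential.
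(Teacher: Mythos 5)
Your strategy differs from the paper's in one essential respect: the paper does not perform any limit analysis itself, but instead invokes Proposition \ref{lemma_olga} (\cite{romaskevich2014}, Lemma 3.4), which already asserts that every degenerate triangular orbit has an isotropic side $A$ tangent to $\ellipse$ and two coinciding non-isotropic sides $B$. Combined with Corollary \ref{cor_deux_caustiques} (a degenerate orbit, being a limit of non-degenerate ones, is circumscribed about some $\gamma^3_j$), the count is then immediate: $4$ choices for $A$ (the common isotropic tangents) times $2$ choices of caustic determining $B$. You instead propose to re-derive that structural statement by degenerating the Poncelet families. Several of your steps are sound: the common tangents of two distinct confocal conics are exactly the four isotropic tangent lines (this is implicit in the paper's proof of Proposition \ref{prop:dual_line_conics}, where the dual conics form a pencil meeting only at the four base points), so a collision point must be an isotropic tangency point, and a side cannot turn isotropic without its endpoints colliding. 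Your converse direction via part (3) of Theorem \ref{main_theorem_orbits} is even a genuine addition: it shows each of the $8$ configurations is actually attained as a limit, a point the paper leaves implicit.

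However, there is a genuine gap, and it sits exactly where you flagged it. Your enumeration of failure modes ("two consecutive vertices collide or one side becomes isotropic") is incomplete in two respects. First, by the paper's definition, a limit triple can fail to be a non-degenerate orbit because some tangent line $T_{M_j}\ellipse$ is isotropic, even with all vertices distinct and all sides non-isotropic; you never treat this mode. (It is in fact vacuous: if $M_1=\alpha$ is an isotropic tangency point and $M_0,M_2\neq\alpha$, then both sides at $\alpha$, being tangent to $\gamma^3_j$, must equal the unique non-isotropic tangent $B$ from $\alpha$ --- the isotropic tangent meets $\ellipse$ only at $\alpha$ --- forcing $M_0=M_2$, a collision of consecutive vertices; but this must be said.) Second, you do not exclude the collision of all three vertices at one point: your secant-to-tangent argument is perfectly consistent with that scenario, since all three sides would then converge to the common isotropic tangent $A$, which \emph{is} tangent to $\gamma^3_j$. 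To rule it out you need a finer argument, for instance: the unordered pair of tangent lines from $p\in\ellipse$ to $\gamma^3_j$ depends continuously on $p$ and equals the pair of distinct lines $\{A,B\}$ at $p=\alpha$; since at each vertex of a non-degenerate orbit the two sides are the two \emph{distinct} tangents from that vertex (Theorem \ref{main_theorem_caustics}), the two sides at a vertex tending to $\alpha$ converge to $A$ and $B$ respectively and cannot both converge to $A$. Filling these two points essentially amounts to re-proving Proposition \ref{lemma_olga}, which is precisely the lemma the paper cites in order to bypass your "main obstacle".
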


\subsection{Structure of the document and main arguments}

We exclude the case when $\ellipse$ is a circle. The document has the following structure:

\begin{enumerate}
	\item \textbf{Section \ref{sec_complex_reflection_law}} gives more details about the complex reflection law and what complex billiard trajectories are.
	\item We present a natural and short proof of Theorem \ref{main_theorem_caustics} in \textbf{Section \ref{sec:algebraic_proof}}: it uses Zariski topology and classical results on real billards.
	\item In \textbf{Section \ref{sec_invariant}}, we present another proof of Theorem \ref{main_theorem_caustics} that simultaneously
yields new results on Joachimstal invariant. The latter is a quantity $P(M,v)$ depending on a vertex $M\in\ellipse$ of a polygon $T$ inscribed in $\ellipse$ and a vector $v$ directing a side of this polygon starting at $M$. We show that this quantity: a) doesn't depend on the vertex $M$ chosen or $v$ if and only if $T$ is a billiard trajectory ; b) is directly related with the existence of a conic inscribed in $T$.
	\item In \textbf{Section \ref{sec_n_orbits}} we prove Theorem \ref{main_theorem_orbits}. We use a theorem of Cayley to establish the existence of confocal conics inscribed in an $n-$sided polygon which is itself inscribed in $\ellipse$, and also to bound their number. This theorem reduces the proof to the computation of a certain polynomial's degree.
	\item The cases of $3-$ and $4-$periodic orbits are studied in \textbf{Sections \ref{sec_3_orbits} and \ref{sec_4_orbits}}, where we compute the exact value of the $N$ appearing in Theorem \ref{main_theorem_orbits}, getting similar results as in \cite{DragRad2019}.
\end{enumerate}

\subsection{Notations and usual properties of conics}
\label{subsec_notations_conics}

In the whole article, we are dealing with the complexification of the real ellipse $\ellipse$ and with the complex ellipses $\class_{\lambda}$. Thus we recall some results about these objects.

The ellipse $\ellipse$ has four \textit{isotropic tangent lines} (which are tangent lines directed by the vectors $(1,\pm i)$, cf \cite{berger90}, Volume 2, Section 17.4.3). The name \textit{isotropic} is due to the fact that $(1,\pm i)$ are isotropic vectors of the complex quadratic form $q(x,y)=x^2+y^2$, i.e., q vanishes on them.

A simple computation shows that their corresponding tangency points have coordinates
\begin{equation}\label{equ_tang_points}
\left(\pm\frac{a^2}{\sqrt{a^2-b^2}},\pm \frac{ib^2}{\sqrt{a^2-b^2}}\right).
\end{equation}
where the signs $\pm$ are independent. This allows us to extend the definition of a \textit{focus of an ellipse} as an intersection point of its non-parallel isotropic tangent lines, cf \cite{berger90}, Volume 2, Section 17.4.3. In our case, the ellipse $\ellipse$ has four foci:
\begin{itemize}
	\item two \textit{real} foci of coordinates $(\pm c,0)$, where $c=\sqrt{a^2-b^2}$ ;
	\item two \textit{complex} foci of coordinates $(0,\pm ic)$.
\end{itemize}

The \textit{foci lines} of $\ellipse$ are defined as two distinct lines : the line joining the complex foci and the line joining the real foci.

\begin{remark} We see here that $\ellipse$ and $\class_{\lambda}$ have the same foci. Hence they have the same isotropic tangent lines !
\end{remark}

The following result will be needed, which generalizes a well known result in the real case (concerning the intersection of an ellipse with a confocal hyperbola). To state it, we recall that $q(x,y) = x^2+y^2$ is a complex quadratic form, whose associated bilinear form is $b$ defined by 
$$b\left(u,v\right) = u_xv_x+u_yv_y$$
for all $u=(u_x,u_y),v=(v_x,v_y)\in\cmplx^2$. Two vector spaces $F$, $G$ of $\cmplx^2$ are said to be orthogonal if 
$$b(u,v)=0$$
for all $u\in F$ and $v\in G$, and this definition extends naturally to lines in $\cmplx^2$: two lines are orthogonal if their directions are orthogonal.

\begin{lemma}
\label{lemma_common_points}
Let $\lambda \neq 0$. Then $\ellipse$ and $\class_{\lambda}$ have four common points, whose coordinates $(x,y)$ are such that
$$x^2 = \frac{a^2(a^2+\lambda)}{a^2-b^2}\qquad\text{and}\qquad y^2 = \frac{b^2(b^2+\lambda)}{b^2-a^2}.$$
The tangent lines to $\class_{\lambda}$ and $\ellipse$ at these points are \textit{orthogonal} (for $q$).
\end{lemma}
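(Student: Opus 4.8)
The plan is to treat $x^2$ and $y^2$ as the genuine unknowns, which turns the intersection $\ellipse\cap\class_{\lambda}$ into a linear problem. First I would subtract the equation of $\class_{\lambda}$ from that of $\ellipse$; using $\frac{1}{a^2}-\frac{1}{a^2+\lambda}=\frac{\lambda}{a^2(a^2+\lambda)}$ and the analogous identity with $b$, the difference becomes $\lambda\left(\frac{x^2}{a^2(a^2+\lambda)}+\frac{y^2}{b^2(b^2+\lambda)}\right)=0$. Since $\lambda\neq 0$ this is equivalent to the linear relation
\[ (\ast)\qquad \frac{x^2}{a^2(a^2+\lambda)}+\frac{y^2}{b^2(b^2+\lambda)}=0. \]
Because the equation of $\class_{\lambda}$ is exactly the equation of $\ellipse$ minus this difference, a point lies on both conics if and only if it satisfies $(\ast)$ together with $\frac{x^2}{a^2}+\frac{y^2}{b^2}=1$.

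Next I would solve this $2\times 2$ linear system in $(x^2,y^2)$. Its determinant equals $\frac{a^2-b^2}{a^2b^2(a^2+\lambda)(b^2+\lambda)}$, which is nonzero because $\ellipse$ is not a circle ($a^2\neq b^2$) and $a^2+\lambda,b^2+\lambda\neq 0$; hence Cramer's rule yields a unique pair $(x^2,y^2)$, and a short computation gives exactly the announced values. Both values are nonzero — again because $a^2+\lambda$ and $b^2+\lambda$ do not vanish — so the four sign choices $x=\pm\sqrt{x^2}$, $y=\pm\sqrt{y^2}$ produce four distinct common points, and there are no others.

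For the orthogonality claim I would compute tangent directions from gradients. At a common point $(x_0,y_0)$ the normal to $\ellipse$ is proportional to $\left(\frac{x_0}{a^2},\frac{y_0}{b^2}\right)$ and that to $\class_{\lambda}$ to $\left(\frac{x_0}{a^2+\lambda},\frac{y_0}{b^2+\lambda}\right)$ (these gradients never vanish, since the origin lies on neither conic, so the tangent lines are well defined). Rotating each by a quarter turn gives tangent directions $t_1=\left(-\frac{y_0}{b^2},\frac{x_0}{a^2}\right)$ and $t_2=\left(-\frac{y_0}{b^2+\lambda},\frac{x_0}{a^2+\lambda}\right)$. The crux of the argument is then the observation that
\[ b(t_1,t_2)=\frac{y_0^2}{b^2(b^2+\lambda)}+\frac{x_0^2}{a^2(a^2+\lambda)} \]
is precisely the left-hand side of $(\ast)$ evaluated at the point, and hence vanishes; this says exactly that $t_1$ and $t_2$ are $q$-orthogonal, i.e. the two tangent lines are orthogonal.

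All the computations are elementary, so the proof has no serious obstacle; the conceptual point worth highlighting is the coincidence that the orthogonality pairing $b(t_1,t_2)$ reproduces the very relation $(\ast)$ that cuts out the common points, which makes orthogonality automatic once the intersection has been located. The only step needing care is the bookkeeping of the non-degeneracy hypotheses ($a\neq b$ and $\lambda\notin\{0,-a^2,-b^2\}$), since it is exactly these that guarantee both that the four intersection points are genuinely distinct and nonzero and that the rotated vectors really direct the tangent lines.
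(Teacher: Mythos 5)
Your proof is correct and follows essentially the same route as the paper, which merely sketches ``solve the system, then check the tangent lines are orthogonal''; you supply the details the paper omits. The one refinement worth noting is that you obtain orthogonality structurally --- observing that $b(t_1,t_2)$ is exactly the relation $(\ast)$ cutting out the intersection --- instead of substituting the explicit values of $x^2$ and $y^2$ into the tangent-line equations, which makes the verification immediate.
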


\begin{proof}
The coordinates of the common points are obtained by solving the system composed by the equations of $\class_{\lambda}$ and $\ellipse$.

Then, since the equations of the tangent lines of $\ellipse$ and $\class_{\lambda}$ in one of these common points can be computed, it is not difficult to check that both tangent lines are orthogonal.
\end{proof}

Finally, we will name by $S$ the point of coordinates $(-a,0)$ of $\ellipse$.

\section{Complex reflection law}
\label{sec_complex_reflection_law}

Here we introduce the notion of complex billiards, which is somewhat similar to pseudo-euclidean billiards studied by V. Dragovic and M. Radnovic in \cite{dragovic}.

Considering an affine chart whose coordinates will be denoted by $(x,y)$, we have the inclusion $\real^2 \subset \cmplx^2 \subset \cp^2$, and $\cp^2 = \cmplx^2 \sqcup \inftyline$, where $\inftyline$ is the infinity line. As introduced and explained in \cite{glut1}, and studied in \cite{romaskevich2014}, the reflection law on an algebraic (analytic) curve in $\real^2$ can be extended to $\cp^2$ by considering the complexified version of the canonical euclidean quadratic form, that is the complex-bilinear non-degenerate quadratic form $q$. It leads to construct a notion of symmetry with respect to lines of $\cmplx^2$. Just note that $q$ has two isotropic subspaces of dimension $1$ (namely $\cmplx (1,i)$ and $\cmplx (1,-i)$). 

\subsection{Definition}

\begin{definition}
Define the \textit{cyclic} points of $\cp^2$ as the points $I = (1:i:0)$ or $J = (1:-i:0)$.
\end{definition}

\begin{definition}[\cite{glut1}, definition 1.2] A line in $\cp^2$ is said to be \textit{isotropic} if it contains either $I$ or $J$ and \textit{non-isotropic} if not. (Thus, the infinity line is automatically isotropic.)
\end{definition}

\begin{definition}[\cite{glut1}, definition 2.1]
The \textit{symmetry} with respect to a line $L$ is defined by the two following points:
	\begin{itemize}
		\item the \textit{symmetry acting on $\cmplx^2$ (on points and on lines)}: it is the unique non-trivial complex-isometric involution (isometric for the form $q$) fixing the points of the line $L$, if $L$ is non-isotropic ;
		\item the \textit{symmetry acting on lines}: if $L$ is an isotropic line passing through a point $x\notin\inftyline$, two lines $\ell$ and $\ell'$ going through $x$ are called symmetric with respect to $L$ if there exist sequences of lines $(L_n)_n$, $(\ell_n)_n$, $(\ell'_n)_n$ through points $x_n$ such that $L_n$ is non-isotropic, $\ell_n$ and $\ell'_n$ are symmetric with respect to $L_n$, $\ell_n \to \ell$, $\ell'_n \to \ell'$, $L_n \to L$ and $x_n \to x$, when $n\to\infty$.
	\end{itemize}
\end{definition}

We recall now Lemma $2.3$ \cite{glut1} which gives an idea of this notion of symmetry in the case of an isotropic line through a finite point.

\begin{lemma}[\cite{glut1}, Lemma 2.3]
\label{lemma_glutsyuk}
If $L$ is an isotropic line through a finite point $x$ and $\ell$, $\ell'$ are two lines passing through $x$, then $\ell$ and $\ell'$ are symmetric with respect to $L$ if and only if either $\ell=L$, or $\ell'=L$.
\end{lemma}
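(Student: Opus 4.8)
The plan is to study the symmetry through its action on the pencil of lines through the base point. Translating $x$ to the origin, a line through $x$ is determined by its direction, so this pencil is identified with $\proj(\cmplx^2)\cong\cp^1$, and for a non-isotropic line $L_n\ni x_n$ directed by $w_n$ the symmetry $\sigma_{L_n}$ acts on directions by the linear involution $v\mapsto 2\frac{b(v,w_n)}{q(w_n)}w_n-v$. This map fixes $w_n$ and negates $w_n\orth$, so on $\cp^1$ it is the unique non-trivial involution fixing the two points $[w_n]$ and $[w_n\orth]$; equivalently, $\ell_n$ and $\ell_n'=\sigma_{L_n}(\ell_n)$ are exactly the pairs that are harmonic conjugate with respect to $L_n$ and its $q$-orthogonal $L_n\orth$, i.e.\ have cross-ratio $-1$. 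The whole statement will follow from analysing what this harmonic relation becomes as $L_n\to L$.

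The key point is that the two fixed points collapse in the isotropic limit: since $L$ is isotropic, its direction $w$ satisfies $q(w)=0$, hence $w\orth=\cmplx w$ and $L\orth=L$. Writing $\phi$ for the $q$-orthogonality involution of $\cp^1$ (which in the slope coordinate $m$ is $\phi(m)=-1/m$, with fixed points the two isotropic slopes $\pm i$), we have $[w_n\orth]=\phi([w_n])$ and $\phi$ fixes $[w]=L$, so both poles $[w_n]$ and $[w_n\orth]$ tend to the single point $L$. For the necessity direction I would argue by contradiction: if $\ell\neq L$ and $\ell'\neq L$, then the four points $(\ell_n,\ell_n',[w_n],[w_n\orth])$ converge to $(\ell,\ell',L,L)$ with $\ell,\ell'\neq L$, so by continuity of the cross-ratio their cross-ratio converges to $(\ell,\ell';L,L)=1$; but it equals $-1$ for all $n$, a contradiction. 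Hence $\ell=L$ or $\ell'=L$.

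For the sufficiency direction, involutivity of the symmetry makes the relation symmetric in $\ell$ and $\ell'$, so it suffices to take $\ell=L$ and exhibit, for each prescribed $\ell'$, witnessing sequences. I would keep $x_n=x$ and work in an affine coordinate $\zeta$ on $\cp^1$ with $L\leftrightarrow\zeta=0$ (we may assume $w=(1,i)$, the other isotropic case being identical under $(x,y)\mapsto(x,-y)$). Writing $\alpha_n,\beta_n\to0$ for the $\zeta$-coordinates of the two poles, the involution is $\iota_n(\zeta)=\frac{(\alpha_n+\beta_n)\zeta-2\alpha_n\beta_n}{2\zeta-(\alpha_n+\beta_n)}$, and the constraint $\beta_n=\phi(\alpha_n)$ forced by $q$-orthogonality collapses this to the one-parameter family $\iota_n(\zeta)=\frac{s_n(\zeta+2i)}{2\zeta-s_n}$ with $s_n=\alpha_n+\beta_n\to0$. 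Choosing $\ell_n$ at $\zeta_n=c\,s_n$ then yields $\iota_n(\zeta_n)\to\frac{2i}{2c-1}$, which sweeps out every value of $\cmplx\setminus\{0\}$ as $c$ varies and equals $\infty$ for $c=\tfrac12$; the remaining value $0$ (the case $\ell'=L$) is reached by letting $\zeta_n\to0$ more slowly than $s_n$, e.g.\ $\zeta_n=\sqrt{s_n}$. In every case $\ell_n\to L$, $L_n\to L$ and $\ell_n'\to\ell'$, so $(L,\ell')$ is symmetric.

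The main obstacle is precisely this sufficiency step. Necessity is a one-line cross-ratio limit, but for sufficiency the naive attempt $\ell_n\equiv L$ fails: it forces $\ell'$ to be the opposite isotropic line (slope $-i$) rather than an arbitrary one. One is obliged to let $\ell_n$ tend to $L$ at a rate commensurate with the collapse rate $s_n$ of the two poles, while respecting the coupling $\beta_n=\phi(\alpha_n)$ imposed by $q$; checking that this coupled, rate-balanced limit still realises every point of $\cp^1$ is the one genuine computation behind the lemma.
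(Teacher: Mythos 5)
The paper never proves this lemma --- it is imported verbatim from Glutsyuk (\cite{glut1}, Lemma 2.3) and used as a black box --- so your argument must be judged on its own, and it is correct. Both directions check out. For necessity: reflection in a non-isotropic $L_n$ does act on the pencil of directions as the unique non-trivial involution fixing $[w_n]$ and $[w_n\orth]$, symmetric pairs are harmonic conjugates of these two points, and since the orthogonality involution $\phi(m)=-1/m$ fixes the isotropic directions, both fixed points collapse to $L$, so a constant cross-ratio $-1$ cannot converge to the limit value $1$. For sufficiency: in the chart $\zeta=m-i$ ($m$ the affine slope), orthogonality forces $\alpha_n\beta_n=-is_n$, which yields exactly your $\iota_n(\zeta)=\frac{s_n(\zeta+2i)}{2\zeta-s_n}$; the choices $\zeta_n=cs_n$ give limits $\frac{2i}{2c-1}$ (covering $\cmplx\setminus\{0\}$, with $\infty$ attained at $c=\tfrac12$) and $\zeta_n=\sqrt{s_n}$ gives limit $0$, so every $\ell'\in\cp^1$ is realised. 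You are also right about where the content lies: the naive choice $\ell_n\equiv L$ only produces the conjugate isotropic line, and the rate-matching is the one genuine computation.

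Two small points to patch, neither a gap in substance. First, in the necessity step, the identity ``cross-ratio $=-1$'' presupposes $\ell_n\notin\{[w_n],[w_n\orth]\}$; you should note that if $\ell_n$ hits a fixed point infinitely often then $\ell'_n=\ell_n$ and both converge to $L$ (using $[w_n\orth]=\phi([w_n])\to L$ again), contradicting $\ell,\ell'\neq L$ outright, so for large $n$ the cross-ratio is defined and equals $-1$. Second, your formula for $\iota_n$ is coordinate-dependent: it is correct in the chart $\zeta=m-i$, not in an arbitrary chart centered at $L$, and this should be stated. As a remark, the same mechanism is even more transparent in the basis of isotropic vectors $e_1=(1,i)$, $e_2=(1,-i)$: reflection in the line directed by $a e_1+b e_2$ sends the direction $[u:v]$ to $[a^2v:b^2u]$, i.e.\ $t\mapsto (a^2/b^2)/t$ in the slope $t=u/v$. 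Necessity is then immediate ($a^2/b^2\to\infty$ forces one slope to blow up, and $t=\infty$ is the direction of $L$), and sufficiency follows by solving $t_n=\frac{a_n^2}{b_n^2 t'_n}$ with $t'_n\to t'$ --- the same harmonic-conjugacy statement as yours, written in coordinates adapted to the degeneration.
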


\subsection{General billiard orbits}

\label{subsec_def_billiard}

\begin{definition}
A \textit{piece of non-degenerate complex billiard trajectory} is an ordered set of points $(M_0,M_1,\ldots,M_n)$ on $\ellipse$ such that
\begin{enumerate}
	\item $M_j\neq M_{j+1}$ for all $j\in\{0,\ldots,n-1\}$;
	\item for all $j\in\{1,\ldots,n-1\}$ the line $T_{M_j}\ellipse$ is non-isotropic;
	\item for all $j\in\{1,\ldots,n-1\}$, the lines $M_{j-1}M_j$ and $M_jM_{j+1}$ are symmetric with respect to the tangent line $T_{M_j}\ellipse$.
\end{enumerate}
We say that a piece of non-degenerate trajectory is a non-degenerate \textit{periodic orbit} or just an \textit{orbit} when $M_n=M_0$ and when the above statements are also true for $M_0$.

The $M_j$'s are called \textit{vertices} of the trajectory, and the lines $M_jM_{j+1}$ \textit{sides} of the trajectory.
\end{definition}

Computations will be easier with the following definitions.

\begin{definition}
A piece of non-degenerate trajectory is said to be :
\begin{enumerate}
	\item \textit{finite} if none of its vertices belong to the infinity line, and \textit{infinite} if one of them belong to $\inftyline$;
	\item \textit{isotropic} if all of its sides are isotropic, and \textit{non-isotropic} if none of its sides is isotropic.
	\item \textit{flat} if all of its sides coincide with one of both foci lines of $\ellipse$.
\end{enumerate}
\end{definition}

\begin{remark}
It is not difficult to see that a piece of trajectory has a side on a foci line if and only if it has all its sides on this foci line.
\end{remark}

Then we will use the following proposition, which follows from the fact that the reflection law with respect to a non-isotropic line permutes two isotropic lines, \cite{glut2}, corollary $2$ (there exist exactly two distinct isotropic lines passing through any point $x\notin\inftyline$).

\begin{proposition}\label{prop_non_isotropic_orbits}
A non-degenerate periodic orbit with an odd number of sides is non-isotropic and none of its sides lie on a foci line.
\end{proposition}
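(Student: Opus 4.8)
The plan is to argue by contradiction, using only the structural fact quoted just before the statement: at every vertex the tangent line is non-isotropic, and the symmetry with respect to a non-isotropic line exchanges the two isotropic lines through the point (there being, by \cite{glut2}, exactly two such lines through any finite point). I would treat the two conclusions separately.

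For the non-isotropy claim, suppose some side of the orbit is isotropic. Each isotropic side, being an honest line joining two points of $\ellipse$ and not the line $\inftyline$, contains exactly one of the cyclic points $I$, $J$; call this its \emph{type}. At a vertex $M_{j+1}$ the two consecutive sides $M_jM_{j+1}$ and $M_{j+1}M_{j+2}$ are symmetric with respect to the non-isotropic tangent $T_{M_{j+1}}\ellipse$, and since this symmetry swaps the two isotropic lines through $M_{j+1}$, the side $M_{j+1}M_{j+2}$ is again isotropic and of the opposite type. An immediate induction then shows that as soon as one side is isotropic every side is, and that the types alternate between $I$ and $J$ along the orbit. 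Writing $t_j$ for the type of $M_jM_{j+1}$, the interior reflections give $t_{j+1}=\mathrm{swap}(t_j)$, hence $t_{n-1}=\mathrm{swap}^{\,n-1}(t_0)$. Because the orbit is periodic, the reflection condition also holds at $M_0=M_n$, so the sides $M_{n-1}M_0$ and $M_0M_1$ are symmetric with respect to $T_{M_0}\ellipse$, giving $t_0=\mathrm{swap}(t_{n-1})$. When $n$ is odd one has $\mathrm{swap}^{\,n-1}=\mathrm{id}$, so $t_{n-1}=t_0$ and therefore $t_0=\mathrm{swap}(t_0)$, i.e. $I=J$, which is absurd. Hence no side can be isotropic, and the orbit is non-isotropic.

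For the foci-line claim I would invoke the Remark preceding the statement: if one side lies on a foci line, then all sides lie on that same foci line, so the orbit is flat. But a foci line meets $\ellipse$ in exactly two points (namely $(\pm a,0)$ for the real foci line and $(0,\pm b)$ for the complex one), and since consecutive vertices are distinct the vertices of a flat orbit must alternate between these two points. Closing such an alternating sequence with $M_n=M_0$ forces $n$ to be even, contradicting the hypothesis; so no side of an odd orbit can lie on a foci line.

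The step I expect to require the most care is making the notion of type and the swapping rigorous at every vertex: one must check that the tangent there is genuinely non-isotropic, which the non-degeneracy hypothesis guarantees (including at $M_0$ for a periodic orbit), so that the symmetry is the actual linear reflection exchanging the two isotropic lines through that point. One must also dispose of the boundary configurations in which a vertex lies on $\inftyline$, where this local picture and the definition of type need to be revisited; I would handle these separately, using that $\ellipse$ meets $\inftyline$ in only two points.
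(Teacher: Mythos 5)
Your proof is correct and is essentially the paper's own argument: the paper justifies this proposition in one line by exactly the fact you use — reflection in a non-isotropic line permutes the two isotropic lines through a finite point — and your parity-of-swaps argument for odd $n$, together with the two-intersection-point alternation argument for the foci lines, is the detailed version of that. The boundary case you flag (an isotropic side through an infinite vertex) resolves just as you indicate: such a side must be $\inftyline$ itself, since any non-isotropic-at-infinity side through a point of $\ellipse\cap\inftyline$ contains no cyclic point, and then every side is $\inftyline$ with vertices alternating between the two points of $\ellipse\cap\inftyline$, again forcing even period.
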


When $n=3$, periodical orbits are called triangular. The proposition \ref{prop_non_isotropic_orbits} implies that there are no non-degenerate isotropic triangular orbits.

\begin{definition}[\cite{glut1}]
A \textit{degenerate triangular complex orbit} on a complex conic $\ellipse$ is an ordered triple of points in $\ellipse$ which is the limit of non-degenerate periodic triangular orbits and which is not a non-degenerate triangular orbit. We define the \textit{sides} of a degenerate orbit as the limits of the sides of the non-degenerate orbits which converge to it. Note that the sides of a degenerate orbit are uniquely defined, see the following more precise proposition.
\end{definition}

\begin{proposition}[\cite{romaskevich2014}, lemma 3.4]
\label{lemma_olga}
A degenerate triangular orbit of an ellipse $\ellipse$ has an isotropic side $A$ which is tangent to $\ellipse$, and two coinciding non-isotropic sides $B$.
\end{proposition}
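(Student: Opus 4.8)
The plan is to treat a degenerate triangular orbit as a limit and to identify precisely which non-degeneracy condition breaks down. Let $(M_0^k,M_1^k,M_2^k)$ be non-degenerate triangular orbits on $\ellipse$ converging to the degenerate orbit $(M_0,M_1,M_2)$. The first observation I would make is that the reflection law is a \emph{closed} condition: by construction the symmetry acting on lines through an isotropic line is defined as a limit of symmetries with respect to non-isotropic lines, so the limiting triple still satisfies the reflection law at each vertex, in the generalized sense described by Lemma \ref{lemma_glutsyuk}. Consequently, the only way the limit can fail to be a non-degenerate orbit is that it violates condition (1) (two consecutive vertices coincide) or condition (2) (some tangent line $T_{M_j}\ellipse$ becomes isotropic).

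Next I would rule out the second possibility in isolation. Suppose all three vertices stay distinct but $T_{M_j}\ellipse$ is isotropic at some $M_j$. By Lemma \ref{lemma_glutsyuk} one of the two sides through $M_j$ must then equal the isotropic tangent $T_{M_j}\ellipse$ itself; but a side is a chord joining two distinct points of $\ellipse$, whereas an isotropic tangent meets $\ellipse$ only at its tangency point, a contradiction. Hence distinct vertices force all tangents non-isotropic, and together with the reflection law this would make the limit non-degenerate. Therefore two consecutive vertices must merge; after relabelling, say $M_1=M_2=:P$ with $M_0\neq P$. Reading off the sides: the chord $M_1^kM_2^k$ joins two points of $\ellipse$ converging to $P$, so its limit is the tangent $T_P\ellipse$, which I call the side $A$; the two remaining chords $M_0^kM_1^k$ and $M_2^kM_0^k$ both converge to the line $M_0P$, giving the two coinciding sides, which I call $B$.

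It then remains to prove $A$ is isotropic and $B$ is non-isotropic. For $A$, I pass to the limit in the reflection condition at $M_1^k$ (equivalently $M_2^k$): since $T_{M_1^k}\ellipse\to T_P\ellipse=A$, the limiting condition says that $A$ and $B$ are symmetric with respect to $A$. If $A$ were non-isotropic, then being fixed by the symmetry across itself would force $B=A$, impossible as $B$ is a genuine chord through $M_0\neq P$; hence $A=T_P\ellipse$ is isotropic and $P$ is one of the four isotropic tangency points \eqref{equ_tang_points}. For $B$, I use the reflection law at $M_0$: there the two incident sides coincide ($s_0=s_1=B$), so $B$ must be fixed by the symmetry across $T_{M_0}\ellipse$, forcing $B$ to be either that tangent or the $q$-normal to $\ellipse$ at $M_0$; since $B$ is a chord and not the tangent, it is the normal. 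A direct computation with the normal direction shows it is isotropic precisely at the points \eqref{equ_tang_points}, and the same Glutsyuk argument as above shows $T_{M_0}\ellipse$ is non-isotropic, so $M_0$ is not such a point and $B$ is non-isotropic.

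The step I expect to be the main obstacle is the careful bookkeeping of the limiting configuration, and in particular excluding the fully degenerate scenario in which all three vertices collide at once; the argument above only yields that \emph{some} consecutive pair merges. To obtain exactly one pair, I would pass to a subsequence along which the approximating orbits are circumscribed about a fixed confocal caustic (there are finitely many, by Theorem \ref{main_theorem_orbits}), and observe that a triangle inscribed in $\ellipse$ and circumscribed about a non-degenerate confocal conic cannot collapse to a single point, since all three sides converging to $T_P\ellipse$ would make that one line a triple tangent to a smooth conic. The secondary technical points, namely that secant chords converge to the tangent at the limit point and that the reflection law genuinely survives the limit at a collision vertex, are exactly where the closedness of the symmetry-on-lines and Lemma \ref{lemma_glutsyuk} do the essential work.
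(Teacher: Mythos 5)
The paper never proves Proposition \ref{lemma_olga}: it is imported verbatim from \cite{romaskevich2014} (Lemma 3.4), so your attempt can only be judged on its own merits. Most of it holds up. The closedness of the generalized reflection law under limits, the use of Lemma \ref{lemma_glutsyuk} to exclude an isotropic tangent line at a vertex while all vertices stay distinct (a side is a genuine chord, and a tangent line meets the conic only at its tangency point), the secant-to-tangent limit identifying $A=T_P\ellipse$, the symmetry argument forcing $A$ to be isotropic, and the fixed-line argument at $M_0$ identifying $B$ with the $q$-normal (hence non-isotropic, since the normal is isotropic exactly at the points \eqref{equ_tang_points} and you have already shown $T_{M_0}\ellipse$ is non-isotropic there) are all correct. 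Two small patches: Lemma \ref{lemma_glutsyuk} requires a finite vertex, which is automatic because isotropic tangency only occurs at the four finite points \eqref{equ_tang_points}; and at $M_0$ you should also treat the case $M_0\in\inftyline$, where the lines through $M_0$ fixed by the reflection are $T_{M_0}\ellipse$ and $\inftyline$ rather than the tangent and the normal --- both are impossible for $B$, so $M_0$ is in fact finite and your dichotomy applies.

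The genuine gap is exactly where you anticipated: excluding the total collapse $(M_0^k,M_1^k,M_2^k)\to(P,P,P)$. Your stated reason --- that $T_P\ellipse$ would become a ``triple tangent to a smooth conic'' --- is not a contradiction at all: a line can perfectly well be the limit of three distinct tangent lines of a smooth conic, it is then simply a tangent line. Worse, tangency of $T_P\ellipse$ to the caustic genuinely occurs here: confocal conics share their isotropic tangent lines (see the Remark in Subsection \ref{subsec_notations_conics}), so under total collapse $T_P\ellipse$ is a common tangent of $\ellipse$ and $\gamma^3_j$, forcing $P$ to be one of the points \eqref{equ_tang_points} and contradicting nothing. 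The step can be repaired, staying within your framework (which is non-circular, since the paper proves Theorems \ref{main_theorem_caustics} and \ref{main_theorem_orbits} and Corollary \ref{cor_deux_caustiques} without Proposition \ref{lemma_olga}): each vertex $M_1^k$ is the intersection of two \emph{distinct} tangent lines to the smooth conic $\gamma^3_j=\class_{\lambda_\pm}$, and when two distinct tangent lines of a smooth conic both converge to a tangent line $L$, their intersection point converges to the tangency point of $L$ with that conic. Hence under total collapse all vertices converge to the point where $T_P\ellipse$ touches $\gamma^3_j$, so that point equals $P$, i.e.\ the isotropic tangency point $P$ of $\ellipse$ lies on $\gamma^3_j$. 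This is impossible: comparing \eqref{equ_tang_points} with the coordinates of the four common points in Lemma \ref{lemma_common_points} shows an isotropic tangency point of $\ellipse$ lies on $\class_{\lambda}$ only when $\lambda=0$, whereas $\lambda_{\pm}\neq 0$ by Lemma \ref{lemma_param_caustics_3}. With this substitution for your ``triple tangent'' sentence, the proof is complete.
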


\section{An algebraic proof of Theorem \ref{main_theorem_caustics}}
\label{sec:algebraic_proof}

In this section we give a short algebraic proof of Theorem \ref{main_theorem_caustics}, noticing that the theorem is true for real orbits and using Zariski topology to conclude.

\begin{proposition}
\label{prop:dual_line_conics}
For every projective line $L$ passing through neither real, nor complex foci there exists a unique $\lambda\notin\{-a^2,-b^2\}$ such that the conic $\mathcal{C}_{\lambda}$ is tangent to $L$.
\end{proposition}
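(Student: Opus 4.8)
The plan is to pass to homogeneous (dual) coordinates and reduce the tangency condition to a single scalar equation, which turns out to be affine-linear in $\lambda$. I would write $L$ as $\{ux+vy+wz=0\}$ in homogeneous coordinates $(x:y:z)$ on $\cp^2$, and record $\class_{\lambda}$ by the symmetric matrix $Q_\lambda=\mathrm{diag}\!\left(\tfrac{1}{a^2+\lambda},\tfrac{1}{b^2+\lambda},-1\right)$, which is invertible precisely because $\lambda\notin\{-a^2,-b^2\}$. Since $\class_{\lambda}$ is then non-degenerate, the classical dual-conic criterion says that $L$ is tangent to $\class_{\lambda}$ if and only if $(u,v,w)\,Q_\lambda^{-1}\,(u,v,w)^{\top}=0$, that is
\begin{equation*}
(a^2+\lambda)u^2+(b^2+\lambda)v^2-w^2=0 .
\end{equation*}
First I would establish this identity (it is just the computation of the dual conic of a diagonal conic), so that the whole problem collapses to the study of one equation.

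The key observation is that this equation is affine-linear in $\lambda$: it reads $(u^2+v^2)\,\lambda=w^2-a^2u^2-b^2v^2$. Hence, as soon as the leading coefficient $u^2+v^2$ is nonzero, there is exactly one solution
\begin{equation*}
\lambda_0=\frac{w^2-a^2u^2-b^2v^2}{u^2+v^2},
\end{equation*}
which yields both existence and uniqueness at once. The main obstacle is therefore the degenerate case $u^2+v^2=0$, and I would identify it geometrically: $u^2+v^2=(u+iv)(u-iv)$ vanishes exactly when $L$ passes through one of the cyclic points $I=(1:i:0)$ or $J=(1:-i:0)$, i.e. exactly when $L$ is isotropic. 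For such a line the equation loses its $\lambda$-dependence and becomes $w^2=c^2u^2$ (using $v^2=-u^2$ and $c^2=a^2-b^2$), so $L$ is tangent either to every $\class_{\lambda}$ or to none — the former happening precisely when $L$ additionally passes through a focus. Thus the delicate point is that the hypothesis ``$L$ meets no focus'' must be used alongside non-isotropy to guarantee $u^2+v^2\neq0$.

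Finally I would check that the candidate $\lambda_0$ indeed avoids the forbidden values, once more translating algebra into geometry. A direct substitution gives $\lambda_0=-b^2\iff w^2=c^2u^2$ and $\lambda_0=-a^2\iff w^2=-c^2v^2$; and one verifies that $w^2=c^2u^2$ says exactly that $L$ passes through a real focus $(\pm c,0)$, while $w^2=-c^2v^2$ says exactly that $L$ passes through a complex focus $(0,\pm ic)$. Consequently, under the assumption that $L$ meets none of the four foci, $\lambda_0$ is admissible, $\class_{\lambda_0}$ is non-degenerate, and it is the unique confocal conic tangent to $L$. The one place demanding care — which I would flag as the real content beyond routine computation — is reconciling the isotropic/degenerate cases with the focus hypothesis: an isotropic line missing all foci (for instance $y=ix$) is tangent to no $\class_{\lambda}$ at all, so the statement is genuinely one about non-isotropic lines, which is exactly the regime in which it will later be applied.
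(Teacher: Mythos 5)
Your argument is correct where it matters, and it is essentially the paper's own proof written out in coordinates: dualizing, the confocal family becomes the pencil $(a^2+\lambda)u^2+(b^2+\lambda)v^2-w^2=0$, and your observation that this equation is affine-linear in $\lambda$ is exactly the pencil structure; where the paper quotes the classical fact that exactly one member of a pencil passes through a point off the base locus, you solve the linear equation explicitly. The one genuine difference is that your computation makes visible an edge case which the paper's appeal to the classical result hides: for an isotropic line $L$ (i.e.\ $u^2+v^2=0$) the unique pencil member through the dual point $L\etoile$ is the member at $\lambda=\infty$, namely $\tilde{x}^2+\tilde{y}^2=0$, which is not the dual of any $\class_{\lambda}$. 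As you note, an isotropic line through no focus --- the infinity line, or $y=ix$ --- is therefore tangent to no $\class_{\lambda}$ at all, so the proposition as literally stated fails for such lines and should carry a non-isotropy hypothesis; this is harmless downstream, since in Corollary \ref{cor:zariski_billard} and in the proof of Theorem \ref{main_theorem_caustics} the proposition is only applied to lines that are tangent to some $\class_{\lambda}$ or are sides of non-isotropic trajectories, and such lines are never isotropic. Your verification that the focus hypothesis excludes the forbidden values, via $\lambda_0=-b^2\Leftrightarrow w^2=c^2u^2\Leftrightarrow L$ meets a real focus and $\lambda_0=-a^2\Leftrightarrow w^2=-c^2v^2\Leftrightarrow L$ meets a complex focus, is the same argument as the paper's identification of the degenerate pencil members with $F_1\etoile\cup F_2\etoile$ (resp.\ the union of the dual lines of the complex foci), just phrased algebraically. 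In short: same approach, but your version is the more careful one, and the isotropic caveat you flag is a real (if benign) imprecision in the statement.
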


\begin{proof} We prove this result by taking projective duality given by polar duality with respect to the Euclidean metric. 

Denote by $F_1$ and $F_2$ the real foci of all $\mathcal{C}_{\lambda}$, and by $G_1$ and $G_2$ their complex foci: $F_1$, $F_2$ are on the $x$-axis, $G_1$, $G_2$ on the $y$-axis (see Subsection \ref{subsec_notations_conics}, or \cite{berger90}, Volume 2, Section 17.4). Hence the dual line $F_i\etoile$ of $F_i$ is vertical and the dual line $G_i\etoile$ of $G_i$ is horizontal.

Then note that any line $F_iG_j$ is tangent to any $\mathcal{C}_{\lambda}$ (again, see Subsection \ref{subsec_notations_conics}, or \cite{berger90}, Volume 2, Section 17.4.3). It induces a point $x_{ij}$ defined as the dual of the line $F_iG_j$, which belongs to all dual conics $\mathcal{C}_{\lambda}\etoile$ of $\mathcal{C}_{\lambda}$. Hence, the family $\mathcal{C}_{\lambda}\etoile$ is a pencil of conics through $4$ distinct points $x_{ij}$. The conics $\mathcal{C}_{\lambda}\etoile$ of the dual pencil are given by the equation $(a^2 + \lambda)\tilde{x}^2 + (b^2+\lambda)\tilde{y}^2 = 1$. They are smooth for every $\lambda\notin\{-a^2,-b^2\}$.

Now, to any line $L$ passing through neither real, nor complex foci corresponds a dual point $y_L$ different from the $x_{ij}$. By polar duality, Proposition \ref{prop:dual_line_conics} is equivalent to say that there is a unique conic passing through $y_L$ in the pencil of conics defined by the dual conics of the $\mathcal{C_{\lambda}}$. And this is a classical result on pencils of conics which is not difficult to prove, see for example \cite{Flatto}.

To see that $\lambda\notin\{-a^2,-b^2\}$, suppose the contrary, for example $\lambda=-b^2$. The conic $\mathcal{C}_{\lambda}\etoile$ is a pair of vertical lines passing through the $x_{ij}$, hence both lines are the dual lines $F_1\etoile$ and $F_2\etoile$ of the real foci. But $L$ do not contain any foci, thus its dual point $L\etoile$ do not belong to $F_1\etoile\cup F_2\etoile=\mathcal{C}_{\lambda}\etoile$. 

The same statement holds for complex foci and $-b^2$ replaced by $-a^2$.
\end{proof}

\begin{remark}
By proof of Proposition \ref{prop:dual_line_conics}, we see in fact that if $L$ goes through a real focus, its dual point $L\etoile$ belongs to a $\mathcal{C}_{\lambda}\etoile$ only when $\lambda=-b^2$. Hence $L$ is never tangent to a $\mathcal{C}_{\lambda}$ for $\lambda\neq-b^2$.
\end{remark}

\begin{corollary}
\label{cor:zariski_billard}
The collections of $(A, L, \lambda)$ such that $A$ lies in the given ellipse $\ellipse$ and $L$ is a line through $A$ that is tangent to $\mathcal{C}_{\lambda}$ form an irreducible two-dimensional algebraic surface, in which the real part (real lines tangent to real confocal
ellipses) is Zariski dense. The image of $L$ under reflection from the tangent line $T_A\ellipse$ is again tangent to $\mathcal{C}_{\lambda}$.
\end{corollary}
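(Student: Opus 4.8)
The plan is to establish the three claims of the corollary in sequence: (i) that the locus of triples $(A,L,\lambda)$ is a two-dimensional algebraic surface; (ii) that it is irreducible with its real part Zariski dense; and (iii) that the reflection property holds. The key observation is that once (i) and (ii) are in place, the reflection statement (iii) follows immediately from Zariski density: the reflection map is algebraic (regular), the tangency condition defining the surface is an algebraic condition, and the reflection property is \emph{already known to hold on the real part} by the classical theorem on real confocal caustics (Chapter 4 of \cite{tabachnikovgeometry}). An algebraic identity holding on a Zariski-dense subset of an irreducible variety holds everywhere, so this is where the whole argument pays off.

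First I would describe the ambient variety explicitly. Let $A=(x_0,y_0)$ range over $\ellipse$, let $L$ be a line through $A$ (parametrized by its projective direction, or equivalently its dual point $L\etoile$), and let $\lambda$ vary. By Proposition \ref{prop:dual_line_conics}, for a generic line $L$ (one avoiding the four foci) there is a \emph{unique} $\lambda$ with $L$ tangent to $\class_\lambda$; this $\lambda$ is a rational function of the dual point $L\etoile$ because it is the solution of the linear-in-$\lambda$ incidence equation $(a^2+\lambda)\tilde{x}^2+(b^2+\lambda)\tilde{y}^2=1$ coming from the dual pencil. Consequently $\lambda$ is determined algebraically by $(A,L)$, and the projection $(A,L,\lambda)\mapsto (A,L)$ is a birational map onto its image. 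The image is the set of pairs $(A,L)$ with $A\in\ellipse$ and $L$ a line through $A$: this is a $\proj^1$-bundle over the conic $\ellipse\cong\proj^1$, hence an irreducible surface of dimension $2$. This gives (i) and (ii) simultaneously, since an irreducible surface remains irreducible after a birational modification, and the tangency locus is the graph of the rational section $\lambda=\lambda(A,L)$.

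For the Zariski density of the real part, I would argue that the real triples $(A,L,\lambda)$ with $A$ on the real ellipse, $L$ a real line through $A$, and $\class_\lambda$ the confocal real ellipse tangent to $L$ form a real two-parameter family (two real degrees of freedom: the choice of $A\in\ellipse(\real)$ and the choice of a real direction at $A$). A real algebraic subset of a complex irreducible surface that has real dimension equal to the complex dimension of the surface is automatically Zariski dense, by the standard fact that the Zariski closure of a set containing a nonempty real-analytic submanifold of full dimension must be the whole irreducible component. I would invoke this principle directly rather than reprove it.

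Finally, for the reflection property (iii), I would set up the reflection map $R\colon (A,L)\mapsto (A,L')$, where $L'$ is the image of $L$ under the complex symmetry with respect to the tangent line $T_A\ellipse$. This is a regular (algebraic) map wherever $T_A\ellipse$ is non-isotropic, by the definition of the symmetry acting on lines in Section \ref{sec_complex_reflection_law}. The claim ``$L'$ is again tangent to $\class_\lambda$'' is equivalent to the vanishing of a single regular function $\Phi(A,L,\lambda)$ on our surface, namely the tangency discriminant of $L'$ with $\class_\lambda$. By the classical real result, $\Phi$ vanishes identically on the real part, which is Zariski dense; since $\Phi$ is regular and the surface is irreducible, $\Phi\equiv 0$ on the whole surface, proving (iii). \textbf{The main obstacle} I anticipate is handling the loci where the algebraic maps degenerate: the isotropic tangent lines $T_A\ellipse$ (where the reflection map $R$ is only defined by a limiting procedure), the four foci (where $\lambda$ is not uniquely determined), and the infinity line. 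These are proper Zariski-closed subsets, so they do not affect the density argument, but some care is needed to confirm that $\Phi$ and $R$ extend to regular functions across, or are well-defined on the complement of, these bad loci — and to check that the real part genuinely meets the good locus in a full-dimensional set. I would dispatch this by noting each exceptional set is a proper subvariety and restricting all the algebraic identities to the open dense complement before taking Zariski closures.
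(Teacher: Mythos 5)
Your proposal is correct and takes essentially the same route as the paper's (much terser) proof: the first statement is deduced from Proposition \ref{prop:dual_line_conics}, and the reflection property follows from algebraicity of the tangency condition for the reflected line together with Zariski density of the real part, where the classical real caustic theorem applies. Your version simply fills in details the paper leaves implicit (the $\proj^1$-bundle/graph-of-a-rational-section description and the density argument); the only point to phrase carefully is the density principle, which should be stated for the conjugation-fixed (hence totally real) locus rather than for an arbitrary real $2$-dimensional subset, since a complex curve inside the surface would otherwise be a counterexample.
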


\begin{proof}
The first statement of Corollary \ref{cor:zariski_billard} follows immediately from Proposition \ref{prop:dual_line_conics}. The second statement follows from the fact that it obviously holds on the Zariski dense real part and from algebraicity of the tangency condition for the reflected line.
\end{proof}

\begin{proof}[Proof of Theorem \ref{main_theorem_caustics}]
If the first statement of Theorem \ref{main_theorem_caustics} is realized, then for each $j$, by Corollary \ref{cor:zariski_billard} the line $M_{j-1}M_j$ is reflected into the $M_{j+1}M_j$ under reflection from the tangent line $T_{M_j}\ellipse$. Hence $T$ is a billard orbit.

If the second statement of Theorem \ref{main_theorem_caustics} is true, by Proposition \ref{prop:dual_line_conics} there is a unique $\lambda\notin\{-a^2,-b^2\}$ such that the conic $\mathcal{C}_{\lambda}$ is tangent to the line $M_0M_1$. By Corollary \ref{cor:zariski_billard}, each $M_{j}M_{j+1}$ is again tangent to $\mathcal{C}_{\lambda}$. 

Note that if you fix $j$, $M_{j-1}M_j$ and $M_{j+1}M_j$ are the two tangent lines to $\mathcal{C}_{\lambda}$ passing through $M_j$: indeed, suppose $M_{j-1}M_j=M_{j+1}M_j$ and take any tangent line $T$ to $\mathcal{C}_{\lambda}$ passing through $M_j$. By Corollary \ref{cor:zariski_billard}, it is reflected in a tangent line $T'$ to $\mathcal{C}_{\lambda}$. In the case when $T=T'$ the line $T$ is orthogonal to $T_{M_j}\ellipse$, and so is the line $M_jM_{j+1}$ for the same reasons. If $T\neq T'$, we have $T$ or $T'=M_jM_{j+1}$ or we would have three distinct tangent lines to $\ellipse$ passing through $M_j$ which is impossible. Hence $T=T'=M_jM_{j-1}=M_jM_{j+1}$.
\end{proof}

\section{Theorem \ref{main_theorem_caustics} and Joachimsthal invariant}
\label{sec_invariant}


\begin{remark}
This section was inspired by the study of billiards in conics conducted in Chapter $4$ - \textit{Billards inside Conics and Quadrics} of \cite{tabachnikovgeometry}. In this book, Theorem $4.4$ shows that for a set of points and directions defined as successive billiard reflections on a real conic $\ellipse$, there is an \textit{invariant} quantity. Known as Joachimsthal invariant, it is defined by
$$\frac{xv_x}{a^2}+\frac{yv_y}{b^2}$$
where $(x,y)$ are the coordinates of a vertex of an orbit, and $v$ a unitary vector having this vertex as starting point and pointing toward the next vertex. The author, Tabachnikov, further explains, without proving it, that one can find such an invariant if and only if one can find a conic tangent to the orbit.

In our case, Joachimsthal invariant doesn't work anymore and we need to change it a little bit: a square power appears, and we have to handle the case of isotropic directions, for which unitary vectors cannot be found (that are vectors $v$ such that $q(v)=1$).
\end{remark}

\subsection{A billiard invariant}
\label{subsec_invariant}

\begin{figure}[!h]
\centering

\begin{tikzpicture}[line cap=round,line join=round,>=triangle 45,x=1cm,y=1cm]
\draw [rotate around={0:(0,0)},line width=1pt] (0,0) ellipse (3.6055512754639882cm and 2cm);
\draw [->,line width=0.2pt] (-3.1850116065471195,-0.9373778593108506) -- (-2.6280785187343483,-0.10682049482641465);
\draw [->,line width=0.2pt] (-1.3064833284084274,1.8640816516113508) -- (-1.8634164162211988,1.033524287126915);
\draw [->,line width=0.2pt] (-1.3064833284084274,1.8640816516113508) -- (-0.4567448009853311,1.3368772936885167);
\draw [->,line width=0.2pt] (3.1938575709826025,-0.9280705901993418) -- (2.3441190435595054,-0.40086623227650797);
\draw [line width=1pt] (-3.1850116065471195,-0.9373778593108506)-- (-1.3064833284084274,1.8640816516113508);
\draw [line width=1pt] (-1.3064833284084274,1.8640816516113508)-- (3.1938575709826025,-0.9280705901993418);
\begin{scriptsize}
\draw[color=black] (-3.4,1) node {$\ellipse$};
\draw[color=black] (-3.338207671594807,-1.1) node {$M_0$};
\draw[color=black] (-1.3979696107556636,2.05) node {$M_1$};
\draw[color=black] (3.3336486791209254,-1.1) node {$M_2$};
\draw[color=black] (-3.063645681853419,-0.33217314749790483) node {$v_0$};
\draw[color=black] (-1.5,1.2) node {$v_1'$};
\draw[color=black] (-0.8,1.3) node {$v_1$};
\draw[color=black] (2.9126536281841306,-0.44199794339445997) node {$v_2'$};
\end{scriptsize}
\end{tikzpicture}
\caption{In Proposition \ref{prop_invariant}, we consider all quantities $P(M_0,v_0)$, $P(M_1,v_1')$, $P(M_1,v_1)$ and $P(M_2,v_2')$.}
\label{fig_prop_invariant}
\end{figure}
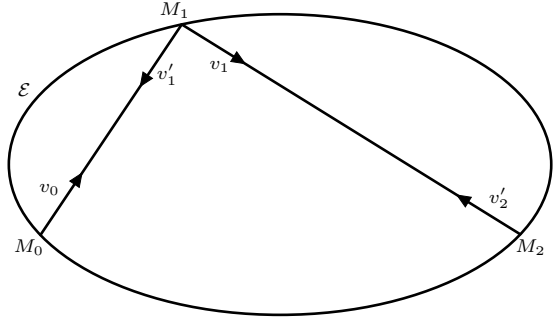

\begin{proposition}
\label{prop_invariant}
Let $T= (M_0,M_1,M_2)$ be a piece of non-degenerate and non-isotropic trajectory on $\ellipse$ with $M_0$ finite. Then the quantity
$$P(M_j,v) = \frac{\left(\frac{x_jv_x}{a^2}+\frac{y_jv_y}{b^2}\right)^2}{q(v)}$$
where $(x_j,y_j)$ are the coordinates of a finite vertex $M_j$ and $v=(v_x,v_y)$ is a directing vector of $M_{j-1}M_j$ or $M_jM_{j+1}$, is independent on the index $j\in\{0,1,2\}$ of the \textbf{finite} vertex chosen and on $v$ (see Fig. \ref{fig_prop_invariant}). 
\end{proposition}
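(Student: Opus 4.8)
The plan is to read $P$ as a ratio of two quadratic quantities and to propagate its value along the trajectory by two elementary moves. Write $Q(x,y)=\frac{x^2}{a^2}+\frac{y^2}{b^2}$ for the quadratic form defining $\ellipse$ and $\beta$ for its polar (symmetric bilinear) form, so that $\frac{x_j v_x}{a^2}+\frac{y_j v_y}{b^2}=\beta(M_j,v)$ and hence
$$P(M_j,v)=\frac{\beta(M_j,v)^2}{q(v)}.$$
In this form $P(M_j,v)$ is visibly homogeneous of degree $0$ in $v$, so it depends only on the direction of $v$ and not on its normalization (which is why no unit vector is needed). It therefore suffices to establish two invariance statements and then chain them. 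First, \emph{chord invariance}: if $M$ and $M'$ are two finite points of $\ellipse$ on a common non-isotropic line of direction $v$, then $P(M,v)=P(M',v)$. Second, \emph{reflection invariance}: if $M$ is a finite vertex with non-isotropic tangent $T_M\ellipse$ and $v,v'$ direct the two lines through $M$ symmetric with respect to $T_M\ellipse$, then $P(M,v)=P(M,v')$. Applying the first move along the side $M_0M_1$, the second at $M_1$, and the first again along $M_1M_2$ then identifies the four quantities $P(M_0,v_0)$, $P(M_1,v_1')$, $P(M_1,v_1)$, $P(M_2,v_2')$.

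For chord invariance I would parametrise $M'=M+tv$ and impose $M'\in\ellipse$: since $Q(M+tv)=1+2t\,\beta(M,v)+t^2Q(v)$ and $Q(M)=1$, the nonzero intersection parameter is $t=-2\beta(M,v)/Q(v)$, whence
$$\beta(M',v)=\beta(M,v)+tQ(v)=-\beta(M,v).$$
The squared numerator kills this sign and $q(v)$ is unchanged, so $P(M,v)=P(M',v)$. (For a genuine finite chord $Q(v)\neq0$ automatically, the line meeting $\ellipse$ in two finite points.)

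For reflection invariance I would use that, $T_M\ellipse$ being non-isotropic, the complex symmetry $\sigma$ with respect to it acts on directions as the linear $q$-isometry fixing the tangent direction $\tau$ and negating its $q$-orthogonal complement; by the reflection law $v'=\sigma(v)$ up to scaling. Because $q(\sigma w)=q(w)$ the denominator is preserved, and because $\tau$ is tangent to $\ellipse$ at $M$ one has $\beta(M,\tau)=0$; writing any $w=\alpha\tau+\gamma n$ with $n$ a $q$-normal to $\tau$ gives $\beta(M,\sigma w)=-\gamma\,\beta(M,n)=-\beta(M,w)$. Again the square removes the sign, so $P(M,v')=P(M,v)$. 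The only place where the interplay between the two forms ($Q$ governing tangency, $q$ governing the reflection) enters is precisely this computation, and it is what forces the square in the definition of $P$.

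The main obstacle is bookkeeping at infinity and isotropy rather than algebra. One must check that the hypotheses (non-degenerate, non-isotropic, interior tangents non-isotropic) make $\sigma$ the genuine linear $q$-reflection and keep every denominator $q(v)$ nonzero, so that each $P$ is well defined. The delicate case is when an intermediate vertex, say $M_1$, lies on $\inftyline$ while $M_0,M_2$ are finite: then the two chord moves can no longer be glued through $M_1$, since $\beta(M_1,\cdot)$ is unavailable. I would handle this either by a direct limiting argument, or, more cleanly, by observing that $P$ is a rational function of the trajectory data and that finite non-isotropic trajectories are Zariski dense, so the identity $P(M_0,v_0)=P(M_2,v_2')$ extends from the all-finite case by continuity.
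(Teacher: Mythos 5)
Your all-finite case is correct and is, in substance, the paper's own proof: your \emph{chord invariance} is the paper's polarization identity $\transp(M_1-M_0)A(M_1+M_0)=0$, which gives $\transp v_0AM_1=-\transp v_0AM_0$, and your \emph{reflection invariance} is the paper's dichotomy ``either $v_0+v_1$ or $v_0-v_1$ is $q$-orthogonal to $AM_1$''; trading the paper's normalization $q(v)=1$ for degree-$0$ homogeneity of $P$ in $v$ is cosmetic. So on that part there is nothing to separate the two arguments.

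The genuine issue is the case you flag but do not prove: $M_1\in\inftyline$ with $M_0,M_2$ finite. Both of your proposed fixes are only named, not executed, and the Zariski-density route is not a formality: you would need to (i) give the set of reflection triples $(M_0,M_1,M_2)$ the structure of an algebraic variety, (ii) show that the all-finite locus is dense in it, i.e.\ that no component consists entirely of triples with infinite middle vertex --- an irreducibility-type statement, e.g.\ that the variety is dominated by an open subset of $\ellipse\times\ellipse$ via $(M_0,M_1)\mapsto (M_0,M_1,M_2)$ --- and (iii) check that $P(M_0,v_0)-P(M_2,v_2')$ is regular at the boundary points in question (it is, because the sides are non-isotropic and $M_0,M_2$ are finite, but this must be said). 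That is far more machinery than the statement needs. The paper disposes of this case in a few lines, and you could too: if $M_1\in\inftyline\cap\ellipse$ then $M_1=(a:\pm ib:0)$, so $M_0M_1$, $M_1M_2$ and $T_{M_1}\ellipse$ all pass through $M_1$, hence are parallel with common direction $v=(a,\pm ib)$, which is non-isotropic since $a\neq b$; the tangent to $\ellipse$ at an infinite point passes through the origin $O$, and the $q$-reflection in a line through $O$ of direction $v$ carries the line $\{M_0+tv\}$ onto the line $\{-M_0+tv\}$, so $M_2=-M_0$ by central symmetry of $\ellipse$ (a line cannot meet the conic in three points); finally $P(M_2,v)=P(-M_0,v)=P(M_0,v)$ because the numerator of $P$ is squared. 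With that substitution your proof is complete; as written, the infinite-vertex case is a gap.
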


\begin{remark}
For periodic orbits with an odd number of sides, one can remove the non-isotropic assumption, see Proposition \ref{prop_non_isotropic_orbits}.
\end{remark}

\begin{proof}
Since a non-isotropic piece of trajectory has non-isotropic sides by definition, $q(v)\neq0$ for all $v$ taken like in the proposition we want to prove.

\textbf{First case:} If $M_0$ and $M_1$ are finite, write $M_0 = (x_0,y_0)$, $M_1=(x_1,y_1)$. Take $v_0$ a vector such that $q(v_0)=1$ and directing $M_0M_1$ and $v_1$ vector such that $q(v_1)=1$ and directing $M_1M_2$. Define the matrix
$$A=\left(\begin{matrix}1/a^2&0\\0&1/b^2 \end{matrix}\right).$$ 
Then since $\transp M_j AM_j=1$ and since $A$ is symmetric, we get
$$\transp(M_1-M_0)A(M_1+M_0) = \transp M_1AM_0-\transp M_0AM_1=0.$$
Since $v_0$ is collinear to $M_1-M_0$ we have further $\transp v_0A(M_1+M_0)=0$, thus
\begin{equation}\label{equ_first_invariant}
\transp v_0AM_1=-\transp v_0AM_0.
\end{equation}
But since $M_0M_1$ and $M_1M_2$ are symmetric with respect to the tangent line of $\ellipse$ in $M_1$ which is also orthogonal to $AM_1$ (the gradient in $M_1$ of the bilinear form defining $\ellipse$), we only have two possibilities : either $v_0+v_1$ or $v_0-v_1$ is orthogonal to $AM_1$. Hence
$$\transp(v_0+v_1)AM_1=0$$
or
$$\transp(v_0-v_1)AM_1=0.$$
In both cases
$$\left(\transp v_0AM_1\right)^2=\left(\transp v_1AM_1\right)^2$$
and using equality \eqref{equ_first_invariant}, we get
\begin{equation}\label{equ_second_invariant}
\left(\transp v_0AM_0\right)^2=\left(\transp v_1AM_1\right)^2
\end{equation}
which proves Proposition \ref{prop_invariant} for unitary vectors. For general vectors, it is enough to divide them by a square root of $q(v)$, which explains why there is a $1/q(v)$ in the invariant formula.

\textbf{Second case:} If $M_0$ is finite and $M_1$ infinite (see Fig. \ref{fig_proof_prop_invariant}), then $M_2$ is finite. Indeed, $M_0M_1$ is a finite line and $T_{M_1}\ellipse$ is not isotropic. Hence the line symmetric to $M_0M_1$ with respect to $T_{M_1}\ellipse$ is finite and parallel to $M_0M_1$ and to $T_{M_1}\ellipse$. Thus $M_2$ is finite. And therefore, we need to prove that 
$$P(M_0,v) = P(M_2,v)$$
with $v$ a vector directing $T_{M_1}\ellipse$ (because $v$ directs the lines $M_0M_1$, $M_1M_2$ and $T_{M_1}\ellipse$). But $M_2 =-M_0$ since $T_{M_1}\ellipse$ goes through the origin $O=(0:0:1)$ (by property of a tangent line at an point of $\ellipse$ on $\inftyline$) and the ellipse $\ellipse$ is symmetric across $O$ (see Fig. \ref{fig_proof_prop_invariant}). This implies that $P(M_0,v) = P(M_2,v)$.
\end{proof}

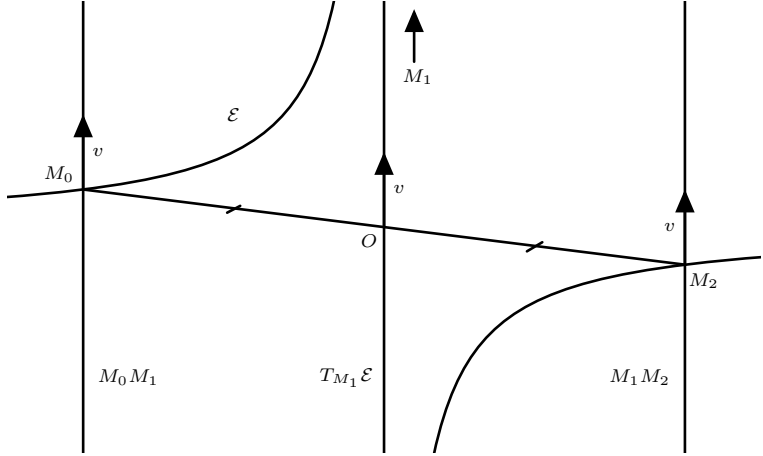
\begin{figure}[!h]
\centering
\begin{tikzpicture}[line cap=round,line join=round,>=triangle 45,x=1cm,y=1cm]
\clip(-5,-3) rectangle (5,3);
\draw [samples=50,domain=-0.99:0.99,rotate around={-45:(0,0)},xshift=0cm,yshift=0cm,line width=1pt] plot ({2*(1+(\x)^2)/(1-(\x)^2)},{2*2*(\x)/(1-(\x)^2)});
\draw [samples=50,domain=-0.99:0.99,rotate around={-45:(0,0)},xshift=0cm,yshift=0cm,line width=1pt] plot ({2*(-1-(\x)^2)/(1-(\x)^2)},{2*(-2)*(\x)/(1-(\x)^2)});
\draw [line width=1pt] (-4,-5.750207336486205) -- (-4,5.688558131117167);
\draw [line width=1pt] (0,-5.750207336486205) -- (0,5.688558131117167);
\draw [line width=1pt] (4,-5.750207336486205) -- (4,5.688558131117167);
\draw [line width=1pt] (-4,0.5)-- (4,-0.5);
\draw [line width=1pt] (-2.085893357114387,0.19163967179694663)-- (-1.9119521473062633,0.282391607349011);
\draw [line width=1pt] (1.8996291458804462,-0.322621296331418)-- (2.1038210008725913,-0.20161871559533223);
\draw [->,line width=1pt] (0.4,2.2) -- (0.4,2.9);
\draw [->,line width=1pt] (0,0) -- (0,1);
\draw [->,line width=1pt] (-4,0.5) -- (-4,1.5);
\draw [->,line width=1pt] (4,-0.5) -- (4,0.5);
\begin{scriptsize}
\draw[color=black] (-4.32034094759505,0.70) node {$M_0$};
\draw[color=black] (0.45,2) node {$M_1$};
\draw[color=black] (4.25,-0.7126491647002291) node {$M_2$};
\draw[color=black] (-0.2,-0.17438473151438424) node {$O$};
\draw[color=black] (-2,1.5) node {$\ellipse$};
\draw[color=black] (-3.8,1) node {$v$};
\draw[color=black] (3.8,0) node {$v$};
\draw[color=black] (0.2,0.5) node {$v$};
\draw[color=black] (-3.4,-2) node {$M_0M_1$};
\draw[color=black] (3.4,-2) node {$M_1M_2$};
\draw[color=black] (-0.5,-2) node {$T_{M_1}\ellipse$};
\end{scriptsize}
\end{tikzpicture}
\caption{A piece of billiard trajectory $(M_0,M_1,M_2)$ on $\ellipse$ with $M_1$ infinite as in the proof of Proposition \ref{prop_invariant}. The points $M_0$ and $M_2$ are symmetric across $O$, hence $M_2=-M_0$ and $P(M_0,v)=P(M_2,v)$. Here $\ellipse$ is represented as an hyperbola which allows us to view the tangent line at the infinity point $M_1$ as the vertical asymptote.}
\label{fig_proof_prop_invariant}
\end{figure}

\begin{corollary}
Let $T= (M_0,\ldots,M_n)$ be a piece of non-degenerate and non-isotropic trajectory on $\ellipse$. Then the quantity $P(M_j,v)$ defined as before doesn't depend on the choice of a finite vertex $M_j$ or on $v$, a directing vector of $M_{j-1}M_j$ or $M_jM_{j+1}$.
Thus we will write $P(M_j,v) = P(T)$.
\end{corollary}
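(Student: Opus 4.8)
The plan is to bootstrap the three-vertex statement of Proposition \ref{prop_invariant} to an arbitrary number of vertices by a chaining argument along consecutive triples, the only real work being the bookkeeping forced by the possible presence of vertices on $\inftyline$.

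First I would record the structural fact that no two consecutive vertices $M_i, M_{i+1}$ can both lie on $\inftyline$: if they did, the side $M_iM_{i+1}$ would be the infinity line, which is isotropic, contradicting the non-isotropy of $T$. Hence every infinite vertex is flanked by finite ones, there is at least one finite vertex (for $n\geq 1$, at least one of $M_0, M_1$ is finite), and between two nearest finite vertices there is either no vertex or exactly one infinite vertex. Next, I would argue that $P$ is well defined at each finite vertex $M_j$, i.e. independent of whether $v$ directs $M_{j-1}M_j$ or $M_jM_{j+1}$ (and, by the $q(v)$ in the denominator, independent of the scaling of $v$). This is exactly the within-vertex equality $\left(\transp v_0 A M_j\right)^2 = \left(\transp v_1 A M_j\right)^2$ obtained in the proof of Proposition \ref{prop_invariant} from the reflection relation $\transp(v_0\pm v_1)AM_j = 0$ at $M_j$; it uses only that $M_j$ is finite and that the two adjacent sides are non-isotropic, so it holds verbatim even when one or both neighbours are infinite.

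It then remains to show that the now well-defined value at $M_j$ is the same for every finite $M_j$. I would apply Proposition \ref{prop_invariant} to the triple $(M_{i-1}, M_i, M_{i+1})$ for each $i\in\{1,\dots,n-1\}$, reversing the triple when needed so that its first vertex is finite, which is legitimate since the reversal of a piece of trajectory is again one. When two nearest finite vertices are adjacent, the first case of the proposition gives equal $P$-values; when they are separated by a single isolated infinite vertex, the configuration is $(\mathrm{finite}, \infty, \mathrm{finite})$ and the second case gives the equality directly. Chaining these equalities along $i = 1,\dots,n-1$ links all finite vertices and yields a common value, which we may then name $P(T)$.

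The main obstacle is precisely this case analysis around infinite vertices: one must check that each selected triple is genuinely a non-degenerate non-isotropic piece of trajectory whose middle vertex has non-isotropic tangent (all inherited from $T$, since the middle vertex of such a triple is interior to $T$), so that the relevant case of Proposition \ref{prop_invariant} applies, and one must make sure that the two directions at an isolated finite vertex are reconciled by the within-vertex step rather than by chaining. Once the no-two-consecutive-infinite observation is in place, every triple falls into the first or the second case after at most one reversal, and the argument goes through.
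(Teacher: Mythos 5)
Your proof is correct and is essentially the paper's (implicit) argument: the corollary is stated there without proof, precisely as the chaining consequence of Proposition \ref{prop_invariant} that you spell out, with triples of consecutive vertices, reversal so that the first vertex is finite, and the within-vertex reflection identity handling vertices at infinity. The only cosmetic caveat is that a triple of the form $(\infty,\text{finite},\infty)$ falls under neither case of the proposition even after reversal; but, as you yourself note, it is never needed, since the two directions at its middle vertex are reconciled by the within-vertex step, and the cross-vertex links run through the triples centred at the infinite vertices, whose two endpoints are finite.
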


Now we see that the invariant property implies a billiard reflection property.

\begin{lemma}
\label{lemma_carac_billiard}
Let $M$ be a finite point on $\ellipse$ such that the line $T_{M}\ellipse$ is non-isotropic. Let $\ell_1$, $\ell_2$ two non-isotropic lines passing through $M$ and directed by the vectors $v_1$, $v_2$. If
\begin{equation}\label{equ_invariant_points}
P(M,v_1) = P(M,v_2)
\end{equation}
then one of the following cases holds:
\begin{enumerate}
	\item $\ell_1=\ell_2$;
	\item $\ell_1$ and $\ell_2$ are symmetric with respect to $T_{M}\ellipse$.
\end{enumerate}
\end{lemma}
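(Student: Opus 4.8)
The plan is to reformulate $P$ algebraically and then diagonalize the situation in a basis adapted to $T_M\ellipse$. With $A=\left(\begin{matrix}1/a^2&0\\0&1/b^2\end{matrix}\right)$ as in Proposition~\ref{prop_invariant}, one has $\frac{xv_x}{a^2}+\frac{yv_y}{b^2}=\transp v\,AM=b(v,AM)$, so that
$$P(M,v)=\frac{b(v,AM)^2}{q(v)}.$$
First I would record the geometric role of $AM$: differentiating the relation $\transp X A X=1$ defining $\ellipse$ shows that the tangent direction $u$ of $T_M\ellipse$ satisfies $b(u,AM)=0$, i.e. $AM$ spans the $q$-orthogonal complement of $u$. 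Since $T_M\ellipse$ is non-isotropic we have $q(u)\neq0$, and as $q$ is non-degenerate its restriction to $\langle u\rangle\orth$ is non-degenerate too, so $n:=AM$ is non-isotropic and $\{u,n\}$ is a $q$-orthogonal basis of $\cmplx^2$ with $q(u),q(n)\neq0$. (Note $M\neq0$ because the origin is not on $\ellipse$, so $n\neq0$.)

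Next, because $P$ is invariant under rescaling of $v$ and the lines $\ell_1,\ell_2$ are unchanged by such rescaling, I would normalize $v_1,v_2$ (possible over $\cmplx$ since they are non-isotropic) so that $q(v_1)=q(v_2)=1$. Writing $v_i=\alpha_i u+\beta_i n$ and using $b(u,n)=0$, one computes $b(v_i,AM)=b(v_i,n)=\beta_i\,q(n)$ and $q(v_i)=\alpha_i^2q(u)+\beta_i^2q(n)=1$. The hypothesis $P(M,v_1)=P(M,v_2)$ then reads $\beta_1^2q(n)^2=\beta_2^2q(n)^2$, hence $\beta_1^2=\beta_2^2$ since $q(n)\neq0$; feeding this back into the two normalization equations and using $q(u)\neq0$ yields $\alpha_1^2=\alpha_2^2$ as well.

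Finally I would conclude by a short sign analysis. The symmetry $\sigma$ with respect to the non-isotropic line $T_M\ellipse$ acts on directions by fixing $u$ and sending $n\mapsto -n$, so $\sigma(v_2)=\alpha_2 u-\beta_2 n$, and two lines through $M$ are symmetric with respect to $T_M\ellipse$ exactly when their direction vectors are proportional after applying $\sigma$. From $\alpha_1=\pm\alpha_2$ and $\beta_1=\pm\beta_2$ the vector $v_1$ equals one of $\pm v_2$ or $\pm\sigma(v_2)=\pm(\alpha_2 u-\beta_2 n)$: in the first case $\ell_1=\ell_2$ (Case~1), in the second $\ell_1$ and $\ell_2$ are symmetric with respect to $T_M\ellipse$ (Case~2). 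In the degenerate situations $\alpha_2=0$ or $\beta_2=0$ the two descriptions coincide, so the dichotomy still holds.

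The part demanding the most care is the bookkeeping of signs and normalizations: the hypothesis only controls the squares $\beta_i^2$ (and, via normalization, $\alpha_i^2$), so one must verify that \emph{every} admissible sign combination genuinely falls into Case~1 or Case~2, and that the boundary cases where $\sigma$ fixes or negates a direction do not create a spurious third possibility. A conceptually cleaner packaging of the same computation, which I might mention, is that the directions solving $P(M,\cdot)=k$ are the projective roots of the binary quadratic form $b(\cdot,AM)^2-k\,q$; in the basis $\{u,n\}$ this form has no cross term and is even in $\alpha$, so its at most two roots are interchanged by $\sigma$, yielding the two cases at once.
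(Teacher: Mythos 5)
Your proof is correct and follows essentially the same route as the paper's: both normalize $v_1,v_2$ to $q$-unit vectors, decompose them along the tangent direction and the normal direction $AM$, and finish with the same sign analysis ($\beta_1^2=\beta_2^2$, $\alpha_1^2=\alpha_2^2$, then checking that every sign combination lands in Case 1 or Case 2). Your write-up merely makes explicit the $q$-orthogonal basis $\{u,n\}$ and the non-degeneracy checks ($q(u),q(n)\neq 0$) that the paper's proof uses implicitly when speaking of components along $T_M\mathcal{E}$ and $T_M\mathcal{E}^{\bot}$.
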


\begin{proof}
Suppose that case $1$ is not true. Let us prove case $2$. 

We can suppose $q(v_1)=q(v_2)=1$. By the equality (\ref{equ_invariant_points}), we have
$$\transp v_1AM = \pm \transp v_2AM$$
hence

$$\transp (v_2\pm v_1)AM = 0.$$
Thus we get that $v_1+v_2$ or $v_2-v_1$ is orthogonal to $AM$ which is orthogonal to the tangent line of $\ellipse$ in $M$. Hence $v_1+v_2$ or $v_1-v_2$ is tangent to $\ellipse$ in $M$. This implies that one of these vectors is fixed by the reflection with respect to $T_M\ellipse$. Therefore this means that the components of the $v_j$'s along the direction of $T_{M}\ellipse\orth$ are the same or have opposite signs. Since the $v_j$'s are unit vectors, their components along the direction of $T_{M}\ellipse$ are also the same or have opposite signs. 

Hence we have only three possibilities: a)$\,v_1$ and $v_2$ are symmetric with respect to $T_{M}\ellipse$, b)$\,v_1$ and $v_2$ are symmetric with respect to $T_{M}\ellipse\orth$, c)$\,v_2=\pm v_1$. Possibility c) cannot happen, otherwise $\ell_1 = \ell_2$. Hence case $2$ is proven.
\end{proof}

\begin{lemma}
\label{lemma_carac_billiard_2}
Let $M_0,M_1,M_2$ be points on $\ellipse$ such that $M_0,M_2$ are finite and $M_1$ infinite. Let $v_j$ be a vector directing the line $M_1M_j$. If
\begin{equation}\label{equ_invariant_points_2}
P(M_0,v_0) = P(M_2,v_2)
\end{equation}
then one of the following cases holds:
\begin{enumerate}
	\item $M_0=M_2$;
	\item $M_0M_1$ and $M_1M_2$ are symmetric with respect to $T_{M_1}\ellipse$.
\end{enumerate}
\end{lemma}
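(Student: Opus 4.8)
The plan is to exploit the strong degeneracy forced by the hypothesis that $M_1 \in \inftyline$. Since $M_1$ lies on $\ellipse$ and on the line at infinity, it is one of the two asymptotic directions of $\ellipse$: a direction $u \in \cmplx^2$ with ${}^{t}u A u = 0$, where $A = \left(\begin{smallmatrix}1/a^2 & 0\\ 0 & 1/b^2\end{smallmatrix}\right)$ is the matrix from the proof of Proposition \ref{prop_invariant}; explicitly $u \propto (\pm i a,\, b)$, so that $q(u) = b^2-a^2 \neq 0$ once the circle is excluded, and the sides are automatically non-isotropic. Because $M_1$ is the point at infinity in the direction $u$, every line through $M_1$ --- in particular $M_0M_1$, $M_1M_2$ and $T_{M_1}\ellipse$ --- is parallel to $u$. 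Hence both $v_0$ and $v_2$ are proportional to $u$, and by the scale-invariance of $P$ in its second argument, $P(M_j,v_j) = ({}^{t}u A M_j)^2 / q(u)$ for $j=0,2$.

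First I would introduce the linear functional $f(M) = {}^{t}u A M = \tfrac{u_x x}{a^2} + \tfrac{u_y y}{b^2}$. The crucial computation is $f(u) = {}^{t}u A u = 0$, which says that $u$ is a direction of every level line $\{f = c\}$; consequently $M_0M_1 = \{f = f(M_0)\}$, $M_1M_2 = \{f = f(M_2)\}$, and $T_{M_1}\ellipse = \{f = 0\}$. The hypothesis $P(M_0,v_0) = P(M_2,v_2)$ then reads $f(M_0)^2 = f(M_2)^2$, i.e. $f(M_0) = \pm f(M_2)$, so the entire statement reduces to reading off the two signs geometrically.

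In the case $f(M_0) = f(M_2)$, the points $M_0$ and $M_2$ lie on the same line $\{f = c\}$. This line is not the tangent $\{f=0\}$, since $f(M_0) = 0$ would place the finite point $M_0$ in $T_{M_1}\ellipse \cap \ellipse = \{M_1\}$; hence it is a genuine secant through $M_1$, meeting $\ellipse$ in exactly one further, finite point, and therefore $M_0 = M_2$ (case 1). In the case $f(M_0) = -f(M_2)$, I would show that the reflection $\sigma$ in the non-isotropic line $T_{M_1}\ellipse$ satisfies $f \circ \sigma = -f$: writing any vector as $\alpha u + \beta w$ with $w$ a $q$-orthogonal complement of $u$, one has $\sigma(\alpha u + \beta w) = \alpha u - \beta w$, and since $f(u)=0$ this gives $f(\sigma(\,\cdot\,)) = -f(\,\cdot\,)$. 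Thus $\sigma$ sends $\{f=c\}$ to $\{f=-c\}$, i.e. $\sigma(M_0M_1) = M_1M_2$, which is precisely case 2 (and is consistent with the relation $M_2 = -M_0$ found in the second case of Proposition \ref{prop_invariant}).

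The main obstacle is conceptual rather than computational: it is the bookkeeping at the infinite vertex, namely identifying $u$ as the common direction of all three lines and verifying the identity $f(u)=0$ that collapses the level sets of $f$ to lines parallel to $u$. Once that degeneracy is in hand, the sign dichotomy $f(M_0) = \pm f(M_2)$ translates directly into the two alternatives, and the two remaining checks --- that $\{f = f(M_0)\}$ is a secant, and that $\sigma$ negates $f$ --- are both short.
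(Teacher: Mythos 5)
Your proposal is correct and is essentially the paper's own argument in coordinate-free dress: both proofs exploit that every line through the infinite point $M_1$ has the common asymptotic direction $u$, use scale-invariance of $P$ to rewrite the hypothesis as $\left(\transp u A M_0\right)^2=\left(\transp u A M_2\right)^2$, and split on the sign (your $f(M_0)=\pm f(M_2)$ is exactly the paper's $\varepsilon=\pm1$), with the plus sign forcing $M_0=M_2$ because a line through $M_1$ meets $\ellipse$ in only one finite point. The only real variation is the minus-sign case: the paper reruns its collinearity argument to pin down $M_2=-M_0$ and then concludes the reflection from the central symmetry of the configuration, whereas you bypass identifying $M_2$ altogether by checking $f\circ\sigma=-f$ for the reflection $\sigma$ in $T_{M_1}\ellipse$, so that $\sigma$ carries the level line $M_0M_1=\{f=f(M_0)\}$ directly onto $M_1M_2=\{f=-f(M_0)\}$ --- a slightly cleaner finish, but the same underlying geometry.
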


\begin{proof}
Suppose that case $1$ is not true. Let us prove case $2$. 

Since $M_1\in\ellipse$ is infinite, $M_1=(a:\pm ib:0)$. Thus $M_0M_1$ and $M_1M_2$ are directed by $v=(a:\pm ib)$. To simplify, suppose $v=(a,ib)=v_1=v_2$. Thus since $P(M_0,v) = P(M_2,v)$ by equality (\ref{equ_invariant_points_2}), we have
$$\frac{x_0}{a}+i\frac{y_0}{b}=\varepsilon\left(\frac{x_2}{a}+i\frac{y_2}{b}\right)$$
with $\varepsilon\in\{1,-1\}$, $M_j=(x_j,y_j)$. Hence
$$\frac{x_2-\varepsilon x_0}{a}+i\frac{y_2-\varepsilon y_0}{b}=0.$$
We show that $\varepsilon = -1$. Indeed, if $\varepsilon=1$, the latter equality means that $\overrightarrow{M_0M_2}$ is orthogonal to $v'=(1/a,i/b)$, which is orthogonal to $v$. Therefore $\overrightarrow{M_0M_2}$ is colinear to $v$ : but this is impossible, otherwise $M_0, M_1,M_2$ would be three distinct points of $\ellipse$ on the same line.

Thus $\varepsilon=-1$. Then, applying the same arguments, we have $M_0=-M_2$. Hence $M_0M_1$ reflects into $M_1M_2$.
\end{proof}

\subsection{Particular values of the invariant}
\label{subsec_particular_values}

The question we consider here is : \textit{for which non-isotropic $v$ do we have $P(M,v) = b^{-2}$ or $P(M,v) = a^{-2}$ ?}

\begin{proposition}
\label{prop_lien_invariant_foyers}
If $M$ is not a point of isotropic tangency of $\ellipse$, we have :
\begin{itemize}
	\item $P(M,v) = a^{-2}$ if and only if $v$ has the same direction as the line going through $M$ and one of the real foci of $\ellipse$.
	\item $P(M,v) = b^{-2}$ if and only if $v$ has the same direction as the line going through $M$ and one of the complex foci of $\ellipse$.
\end{itemize}
\end{proposition}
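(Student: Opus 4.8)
The plan is to rewrite the invariant in matrix form and then, for each fixed finite point $M=(x,y)\in\ellipse$, to read the equation $P(M,v)=a^{-2}$ (resp. $=b^{-2}$) as a homogeneous quadratic equation in the direction $v=(v_x,v_y)$, whose (at most two) solutions I will identify with the directions toward the foci. Recall from the proof of Proposition \ref{prop_invariant} the symmetric matrix $A=\mathrm{diag}(a^{-2},b^{-2})$, so that the numerator of $P$ reads $\transp v AM=\frac{xv_x}{a^2}+\frac{yv_y}{b^2}$ and $P(M,v)=(\transp v AM)^2/q(v)$. Throughout, the hypothesis that $M$ is not a point of isotropic tangency is exactly what keeps the relevant foci lines non-isotropic, so that $P$ is defined along them; indeed, at such a point the isotropic tangent line through $M$ contains one real and one complex focus, so two of the foci lines through $M$ would be isotropic and $q$ would vanish on them.

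First I would prove the ``if'' directions by direct substitution. Writing $F_\pm=(\pm c,0)$ for the real foci with $c=\sqrt{a^2-b^2}$ and taking $v=M-F_\pm=(x\mp c,\,y)$, I would use the relation $\frac{x^2}{a^2}+\frac{y^2}{b^2}=1$ together with $c^2=a^2-b^2$ to obtain the two identities $\transp v AM=\frac{a^2\mp cx}{a^2}$ and $q(v)=\frac{(a^2\mp cx)^2}{a^2}$; dividing gives $P(M,v)=a^{-2}$. The same computation with the complex foci $G_\pm=(0,\pm ic)$ and $v=M-G_\pm=(x,\,y\mp ic)$ yields $\transp v AM=\frac{b^2\mp icy}{b^2}$ and $q(v)=\frac{(b^2\mp icy)^2}{b^2}$, hence $P(M,v)=b^{-2}$.

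For the converse I would argue by counting roots. The equation $P(M,v)=a^{-2}$ is equivalent to $a^2(\transp v AM)^2-q(v)=0$, i.e. to the quadratic equation $\transp v Q v=0$ with the symmetric matrix $Q=a^2\,(AM)\transp{(AM)}-I$. Since $M\neq 0$ we have $AM\neq 0$, so $(AM)\transp{(AM)}$ has rank one and cannot equal $a^{-2}I$ (which has rank two); thus $Q\neq 0$, the form $\transp v Q v$ is a nonzero degree-two polynomial, and it vanishes on at most two directions of $\proj^1$. By the previous step the directions $M-F_+$ and $M-F_-$ are solutions, so when they are distinct, i.e. when $M$ is not on the $x$-axis, they are \emph{all} the solutions, which is the claim. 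The degenerate position $M=(\pm a,0)$ is then settled by a one-line substitution ($AM=(\pm a^{-1},0)$ reduces the equation to $v_y^2=0$): the unique direction is the $x$-axis, which is still the direction toward both real foci. The scheme for $P(M,v)=b^{-2}$ is identical, the degenerate position now being $M=(0,\pm b)$, where the equation reduces to $v_x^2=0$ and the $y$-axis is the common direction toward the two complex foci.

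The substitutions of the second paragraph are routine; the only genuine care is in the converse, where one must (i) verify that the candidate quadratic form is nonzero, so that ``at most two directions'' is legitimate, and (ii) dispose of the degenerate positions of $M$ on the axes, where the two foci directions collapse to a single double root. I expect (ii) — rather than any algebra — to be the main point to handle cleanly, while the standing hypothesis that $M$ is not an isotropic tangency point is what prevents the equivalence from becoming vacuous through an undefined value of $P$ along an isotropic foci line.
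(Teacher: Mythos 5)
Your proposal is correct and follows essentially the same strategy as the paper: verify by direct substitution that the directions toward the foci achieve the value $a^{-2}$ (resp. $b^{-2}$), then conclude by a counting argument that at most two directions can realize any given value of $P(M,\cdot)$, treating the vertex positions of $M$ separately. The only cosmetic difference is that you bound the count via the projective roots of the nonzero binary quadratic form $\transp v\bigl(a^2 (AM)\transp{(AM)}-I\bigr)v$, whereas the paper normalizes to $q(v)=1$ and intersects a line with that conic (weak Bezout); your explicit treatment of the degenerate vertex case is slightly more complete than the paper's, which dismisses it as easy.
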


\begin{figure}[!h]
\centering
\begin{tikzpicture}[line cap=round,line join=round,>=triangle 45,x=1cm,y=1cm]
\clip(-5, -2.2) rectangle (5,2.2);
\draw [rotate around={0:(0,0)},line width=1pt] (0,0) ellipse (3.6055512754639882cm and 2cm);
\draw [->,line width=1pt] (-1.3064833284084274,1.8640816516113508) -- (-0.388629268518376,1.4671636188346195);
\draw [line width=1pt] (-1.3064833284084274,1.8640816516113508)-- (3.5777138117424787,-0.2480470682797591);
\begin{scriptsize}
\draw [fill=black] (-3,0) circle (2.5pt);
\draw[color=black] (-3,0.4) node {$F_1$};
\draw [fill=black] (3,0) circle (2.5pt);
\draw[color=black] (3,0.4) node {$F_2$};
\draw [fill=black] (-1.3064833284084274,1.8640816516113508) circle (0.5pt);
\draw[color=black] (-1.4594996136510527,2.1) node {$M$};
\draw[color=black] (-0.618040352164444,1.8397384958624228) node {$v$};
\end{scriptsize}
\end{tikzpicture}
\caption{The non-isotropic line passing through $M$ and directed by $v$ goes through a real focus if and only if $P(M,v)=a^{-2}$, see Proposition \ref{prop_lien_invariant_foyers}.}
\end{figure}
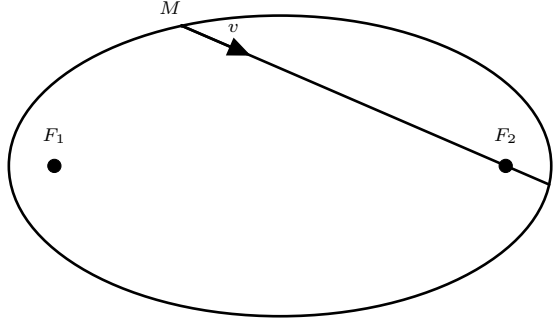

\begin{proof}
We just prove the first point, the second one is analogous.

First, for a fixed $M\in\ellipse$, and a $k\in\cmplx$, there are at most two directions $v$ such that
$$P(M,v) = k.$$
(two collinear vectors have the same direction). Indeed, the equation $\frac{xv_x}{a^2}+\frac{yv_y}{b^2}=k'$  of unknowns $v_x,v_y$ defines a complex line which intersects the affine set $v_x^2+v_y^2=1$
 in at most two points (weak form of Bezout theorem). And considering the same equation but with $-k'$ instead of $k'$, we get at most four unitary vectors such that $P(M,v) = k$, two of them being the opposite of the others. Hence there are at most two directions such that $P(M,v) = a^{-2}$.

We give here the different possibilities for those directions. The real foci of $\ellipse$ have coordinates $(\pm c,0)$ where $c=\sqrt{a^2-b^2}$. Hence $v_{\pm} = (x\pm c,y)$ are directing the lines going through $M$ and the real foci.
Then we have
$$\frac{xv_{+,x}}{a^2}+\frac{yv_{+,y}}{b^2} = \frac{x(x+c)}{a^2}+\frac{y^2}{b^2} = 1+\frac{xc}{a^2} = \frac{a^2+xc}{a^2}$$
and using the fact that $M\in\ellipse$, we have
$$\begin{array}{rcll}
a^2q(v_+) &=& a^2(x+c)^2+a^2y^2\\
 &=& a^2(x+c)^2 +(ab)^2-b^2x^2\\
 &=& c^2x^2+2a^2xc +a^4\\
 &=& (a^2+cx)^2\\
\end{array}$$
Hence, since $M$ is not an isotropic tangency point of $\ellipse$ and by \eqref{equ_tang_points}, we get that
$$q(v_+) \neq0.$$
Thus,
$$P(M,v_+) = a^{-2}$$
and the same is true with $v_-$. There is one case when  $v_-$ and $v_+$ are colinear : when $M$ is one vertex of the ellipse. But this case can be solved easily.
\end{proof}

\begin{corollary}[Forbidden values of $P(T)$]
\label{cor_valeurs_interdites}
Let $T=(M_0,\ldots,M_n)$, with $n\geq3$, be a non-degenerate and non-isotropic piece of trajectory. If $T$ has none of its sides passing through a real or a complex focus of $\ellipse$, then  $P(T) \neq a^{-2}$ and $P(T)\neq b^{-2}$.
\end{corollary}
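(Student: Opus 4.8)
The plan is to argue by contradiction, reducing everything to Proposition \ref{prop_lien_invariant_foyers}. Suppose $P(T)=a^{-2}$ (the case $P(T)=b^{-2}$ is identical, with the complex foci replacing the real ones). Since $P(T)$ is well defined, it equals $P(M_j,v)$ for any finite vertex $M_j$ and any vector $v$ directing one of the two sides meeting at $M_j$. The strategy is to exhibit a single finite vertex at which Proposition \ref{prop_lien_invariant_foyers} applies, and then read off from it that the corresponding side must pass through a focus, contradicting the hypothesis on $T$.

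First I would locate a suitable vertex. Because $n\geq 3$, the trajectory has interior vertices $M_1,\ldots,M_{n-1}$, in particular $M_1$ and $M_2$. Since $T$ is non-isotropic, none of its sides is the infinity line $\inftyline$; as $\inftyline$ is the only line joining two points of $\inftyline$, no two consecutive vertices can both be infinite, so at least one of $M_1,M_2$ is finite. Fix such a finite interior vertex $M_j$, with $1\leq j\leq n-1$.

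Next I would verify the hypothesis of Proposition \ref{prop_lien_invariant_foyers}. By non-degeneracy the tangent line $T_{M_j}\ellipse$ is non-isotropic at every interior index, so $M_j$ is not a point of isotropic tangency of $\ellipse$, and the proposition applies at $M_j$. Taking $v$ to direct the side $M_{j-1}M_j$ (which is non-isotropic, so $q(v)\neq 0$ and $P(M_j,v)$ is indeed defined), the equality $P(M_j,v)=a^{-2}$ forces $v$ to have the same direction as the line joining $M_j$ to a real focus. Since that line and the side $M_{j-1}M_j$ both pass through $M_j$ and share a direction, they coincide, so $M_{j-1}M_j$ passes through a real focus — contradicting the assumption that no side of $T$ meets a real or complex focus. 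The same reasoning with $a^{-2}$ replaced by $b^{-2}$ and the real foci replaced by the complex foci excludes $P(T)=b^{-2}$.

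The argument is essentially immediate once the invariant $P(T)$ and Proposition \ref{prop_lien_invariant_foyers} are available; the only point requiring care is the combinatorial step producing a finite interior vertex whose tangent line is non-isotropic, which is precisely where the three hypotheses $n\geq 3$, non-degeneracy, and non-isotropy each enter. I expect no serious obstacle beyond this bookkeeping.
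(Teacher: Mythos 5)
Your proof is correct and takes essentially the same route as the paper: the paper states this corollary as an immediate consequence of Proposition \ref{prop_lien_invariant_foyers} (leaving the deduction implicit), and your argument by contradiction at a finite interior vertex is exactly that deduction. The bookkeeping you supply — using $n\geq3$ and non-isotropy to produce a finite vertex among $M_1,M_2$, and non-degeneracy to rule out isotropic tangency there — is precisely what is needed to make the implicit step rigorous.
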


\begin{figure}[!h]
\centering
\begin{tikzpicture}[line cap=round,line join=round,>=triangle 45,x=0.5cm,y=0.5cm]
\clip(-4.5,-2.6) rectangle (4.5,2.8);
\draw [rotate around={0:(0,0)},line width=1pt] (0,0) ellipse (1.8028cm and 1cm);
\draw [line width=1pt] (-3.6055512754639882,0)-- (3.6055512754639882,0);
\begin{scriptsize}
\draw[color=black] (0,2.5) node {$P(T) = a^{-2}$};
\draw [fill=black] (-3,0) circle (2.5pt);
\draw[color=black] (-3,0.4) node {$F_1$};
\draw [fill=black] (3,0) circle (2.5pt);
\draw[color=black] (3,0.4) node {$F_2$};
\end{scriptsize}
\end{tikzpicture}
\hspace*{1cm}
\begin{tikzpicture}[line cap=round,line join=round,>=triangle 45,x=0.5cm,y=0.5cm]
\clip(-4.5,-2.6) rectangle (4.5,2.8);
\draw [rotate around={0:(0,0)},line width=1pt] (0,0) ellipse (1.8028cm and 1cm);
\draw [line width=1pt] (-3.6055512754639882,0)-- (0,2);
\draw [line width=1pt] (3.6055512754639882,0)-- (0,2);
\draw [line width=1pt] (0,-2)-- (3.6055512754639882,0);
\draw [line width=1pt] (-3.6055512754639882,0)-- (0,-2);
\begin{scriptsize}
\draw[color=black] (0,2.5) node {$P(T) \neq a^{-2},b^{-2}$};
\draw [fill=black] (-3,0) circle (2.5pt);
\draw[color=black] (-2.5,0) node {$F_1$};
\draw [fill=black] (3,0) circle (2.5pt);
\draw[color=black] (2.5,0) node {$F_2$};
\end{scriptsize}
\end{tikzpicture}
\caption{Trajectories with their respectives $P(T)$}
\label{fig_coroll_foyers}
\end{figure}
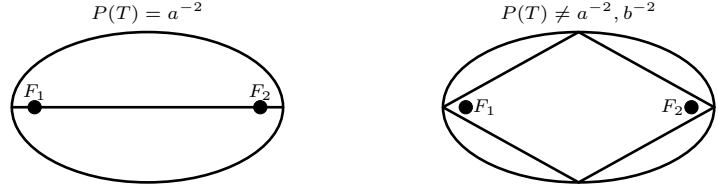

\subsection{Proof of Theorem \ref{main_theorem_caustics}}
\label{subsec_caustics}

Here we prove that the invariant $P(T)$ charaterizes pieces of trajectories which are tangent to the same conic. We first recall the following elementary fact:

\begin{lemma}\label{lemma_conique_duale}
let $C$ be a conic in $\cp^2$ given by the equation $\transp X A X=0$ where $A$ is a $3\times3$ invertible matrix, and $v=(a',b',c')\in\cmplx^3$ defining the line $\ell_v$ of equation $a'x+b'y+c'z=0$. Then $\ell_v$ is tangent to $C$ if and only if
$$\transp v A\ante v = 0.$$
\end{lemma}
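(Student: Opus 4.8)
The plan is to use the classical pole–polar correspondence attached to the non-degenerate conic $C$. As is standard for conics, I would first note that one may assume $A$ symmetric, since the quadratic form $\transp X A X$ depends only on the symmetric part $\frac12(A+\transp A)$; the hypothesis that $A$ is invertible then says precisely that $C$ is a smooth conic, so at every point $p\in C$ there is a well-defined tangent line. The key preliminary fact I would establish is that the tangent line to $C$ at a point $p\in C$ is exactly the polar line of $p$, namely the line with coefficient vector $Ap$, i.e. the line of equation $\transp p A X=0$. This is immediate by differentiating the defining equation at $p$ and using the symmetry of $A$: the gradient of $X\mapsto\transp X A X$ at $p$ is $2Ap$, so the tangent line has coefficient vector $Ap$.

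With this in hand, the set of lines tangent to $C$ is precisely the image of $C$ under the projective linear map $p\mapsto Ap$. Hence a line $\ell_v$ with coefficient vector $v$ is tangent to $C$ if and only if $v$ is proportional to $Ap$ for some $p\in C$; equivalently, setting $p=A\ante v$ — which is legitimate because $A$ is invertible — if and only if this particular $p$ lies on $C$.

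It then remains to translate the membership $A\ante v\in C$ into the stated condition. Substituting $p=A\ante v$ into $\transp p A p=0$ and using $\transp(A\ante)=A\ante$ (again by symmetry of $A$), one computes $\transp(A\ante v)\,A\,(A\ante v)=\transp v A\ante v$, so $p\in C$ is exactly the condition $\transp v A\ante v=0$. Conversely, if $\transp v A\ante v=0$, then $p:=A\ante v$ satisfies $\transp p A p=0$, so $p\in C$, and $Ap=v$ exhibits $\ell_v$ as the tangent line to $C$ at $p$. This settles both implications.

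The only genuinely delicate point — the ``main obstacle,'' though it is mild — is the identification of the tangent line at $p$ with the polar line $\transp p A X=0$, together with the bookkeeping of projective scalars: one must argue with proportionality of $v$ and $Ap$ rather than equality, and one must use the invertibility of $A$ twice, once to guarantee that $C$ is smooth so that tangents exist at every point, and once to ensure that $p\mapsto Ap$ is a bijection so that the correspondence between tangent lines and points of $C$ is exact. Everything else reduces to the single symmetric-matrix computation above.
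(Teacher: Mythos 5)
Your proof is correct. Note that the paper does not actually prove this lemma: it is introduced with ``We first recall the following elementary fact'' and stated without argument, so there is nothing to compare against; your pole--polar argument (tangent line at $p$ has coefficient vector $Ap$, hence the tangent lines are exactly the image of $C$ under $p\mapsto Ap$, and substituting $p=A\ante v$ gives the condition $\transp v A\ante v=0$) is the standard proof of this classical fact about dual conics, and both implications are handled correctly.

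One small caveat: your opening reduction (``one may assume $A$ symmetric'') is not a harmless normalization as phrased, because replacing $A$ by its symmetric part changes $A\ante$ --- so the stated condition $\transp v A\ante v=0$ would refer to a different matrix --- and the symmetric part of an invertible matrix need not even be invertible. The lemma should simply be read as implicitly assuming $A$ symmetric, which is how it is used in the paper (the matrix $B_{\lambda}$ in Proposition \ref{prop_ellipse_inscrite} is diagonal); the rest of your argument then goes through verbatim.
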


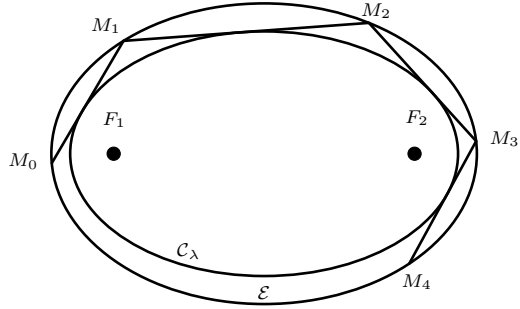
\begin{figure}[!h]
\begin{tikzpicture}[line cap=round,line join=round,>=triangle 45,x=1cm,y=1cm]
\clip(-5,-2.1) rectangle (5,2.1);
\draw [rotate around={0:(0,0)},line width=1pt] (0,0) ellipse (2.8284271247461907cm and 2cm);
\draw [rotate around={0:(0,0)},line width=1pt] (0,0) ellipse (2.57838050536101cm and 1.627281791954208cm);
\draw [line width=1pt] (-2.822840787497323,-0.12563814795175854)-- (-1.869312067799702,1.500945101124356);
\draw [line width=1pt] (-1.869312067799702,1.500945101124356)-- (1.393051102240205,1.740604582687773);
\draw [line width=1pt] (1.393051102240205,1.740604582687773)-- (2.818498874725576,0.1674277354124261);
\draw [line width=1pt] (2.818498874725576,0.1674277354124261)-- (1.928469151211218,-1.463045920814977);
\begin{scriptsize}
\draw [fill=black] (-2,0) circle (2.5pt);
\draw[color=black] (-1.9906447176999233,0.4527745617943184) node {$F_1$};
\draw [fill=black] (2,0) circle (2.5pt);
\draw[color=black] (2.030966426407765,0.4773715718500229) node {$F_2$};
\draw[color=black] (-2.1013312629505934,1.7) node {$M_1$};
\draw[color=black] (1.4898322051822654,1.9) node {$M_2$};
\draw[color=black] (3.2,0.2) node {$M_3$};
\draw[color=black] (-3.2,-0.1) node {$M_0$};
\draw[color=black] (2.030966426407765,-1.7) node {$M_4$};
\draw[color=black] (-1,-1.3) node {$\class_{\lambda}$};
\draw[color=black] (0,-1.85) node {$\ellipse$};
\end{scriptsize}
\end{tikzpicture}
\caption{The confocal caustic $\class_{\lambda}$ inscribed in a piece of billiard trajectory.}
\label{fig_thm_caustic}
\end{figure}

We are now ready to prove the first part of Theorem \ref{main_theorem_caustics}: 

\begin{proposition}
\label{prop_ellipse_inscrite}
Let $T=(M_0,\ldots,M_n)$ be a non-degenerate non-isotropic piece of billiard trajectory. We suppose that none of its sides pass through a (real or complex) focus. Then there is a unique $\lambda\in\cmplx$ such that $a^2+\lambda\neq0$, $b^2+\lambda\neq0$ and $T$ is circumscribed about the conic $\class_{\lambda}$.

We have in this case $\lambda = -(ab)^2P(T)$.
\end{proposition}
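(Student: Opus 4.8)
The plan is to test tangency of a single side against the whole confocal family, read off the value of $\lambda$ directly, and then invoke the invariance of $P$ to handle all sides at once.

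First I would fix a side of $T$, say $M_{j-1}M_j$, and choose a finite endpoint $M=(x_0,y_0)\in\ellipse$ lying on it (at least one endpoint is finite, since a side with two infinite endpoints would be the isotropic infinity line, contradicting non-isotropy); let $v=(v_x,v_y)$ direct this side, so that $q(v)\neq0$. Writing the side in homogeneous coordinates as $v_y x - v_x y + (v_x y_0 - v_y x_0)z = 0$, its coefficient vector is $n=(v_y,-v_x,\,v_x y_0 - v_y x_0)$. The conic $\class_{\lambda}$ has matrix $A=\mathrm{diag}\!\left(\tfrac{1}{a^2+\lambda},\tfrac{1}{b^2+\lambda},-1\right)$, so $A\ante=\mathrm{diag}(a^2+\lambda,\,b^2+\lambda,\,-1)$, and Lemma~\ref{lemma_conique_duale} turns the tangency of this side to $\class_{\lambda}$ into the scalar equation $\transp n A\ante n = 0$, i.e.
$$(a^2+\lambda)v_y^2+(b^2+\lambda)v_x^2-(v_y x_0-v_x y_0)^2=0.$$

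The heart of the argument is a single algebraic simplification of the left-hand side. Splitting off the $\lambda$-part gives $\lambda\,q(v)$ plus the $\lambda$-free remainder $a^2v_y^2+b^2v_x^2-(v_yx_0-v_xy_0)^2$. Using the relation $\tfrac{x_0^2}{a^2}+\tfrac{y_0^2}{b^2}=1$ (equivalently $a^2-x_0^2=\tfrac{a^2y_0^2}{b^2}$ and $b^2-y_0^2=\tfrac{b^2x_0^2}{a^2}$), I would rewrite this remainder as $v_y^2(a^2-x_0^2)+v_x^2(b^2-y_0^2)+2v_xv_yx_0y_0$ and recognize it as a perfect square, namely $\left(\tfrac{bv_xx_0}{a}+\tfrac{av_yy_0}{b}\right)^2=(ab)^2\left(\tfrac{x_0v_x}{a^2}+\tfrac{y_0v_y}{b^2}\right)^2$, which is exactly the numerator of $(ab)^2P(M,v)$. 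I expect this perfect-square identity to be the one genuinely nontrivial step; everything else is bookkeeping.

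Consequently $\transp n A\ante n = \lambda\,q(v)+(ab)^2\big(\tfrac{x_0v_x}{a^2}+\tfrac{y_0v_y}{b^2}\big)^2 = q(v)\big(\lambda+(ab)^2P(M,v)\big)$, so since $q(v)\neq0$ the side is tangent to $\class_{\lambda}$ if and only if $\lambda=-(ab)^2P(M,v)=-(ab)^2P(T)$. Because $P(T)$ is independent of the chosen vertex and direction (Proposition~\ref{prop_invariant} and the corollary following it), this single value of $\lambda$ simultaneously makes every side of $T$ tangent to $\class_{\lambda}$, while the tangency equation for one side already pins $\lambda$ down, yielding both existence and uniqueness together with the stated formula $\lambda=-(ab)^2P(T)$. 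Finally, to verify $a^2+\lambda\neq0$ and $b^2+\lambda\neq0$ I would apply Corollary~\ref{cor_valeurs_interdites}: since no side passes through a real or complex focus, $P(T)\neq a^{-2}$ and $P(T)\neq b^{-2}$, whence $\lambda=-(ab)^2P(T)\neq -b^2$ and $\lambda\neq -a^2$.
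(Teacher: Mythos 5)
Your proof is correct and takes essentially the same route as the paper's: both apply Lemma \ref{lemma_conique_duale} to the homogeneous coefficient vector of a side, simplify the tangency condition using the ellipse relation into $q(v)\left(\lambda+(ab)^2P(M,v)\right)=0$, and then combine the invariance of $P(T)$ with Corollary \ref{cor_valeurs_interdites} to obtain existence, uniqueness, and $\lambda\notin\{-a^2,-b^2\}$. The only cosmetic difference is that you organize the $\lambda$-free remainder as an explicit perfect square, whereas the paper substitutes $a^2=x_0^2+\frac{a^2y_0^2}{b^2}$, $b^2=y_0^2+\frac{b^2x_0^2}{a^2}$ directly---the same computation.
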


\begin{proof} For $s\in\cmplx$, let us define the matrix
$$B_s=\left(\begin{matrix}a^2+s&0&0\\0&b^2+s&0\\0&0&-1 \end{matrix}\right).$$ 
Since the orbit is non-isotropic, two consecutive sides cannot be infinite at the same time. Hence we suppose without loss of generality that $M_0$ is finite. Then the line $M_0M_1$ is defined by the equation in $\cp^2$ $v_yx-v_xy+(v_xy_0-v_yx_0)z=0$. Here $M_0=(x_0,y_0)$ and $v=(v_x,v_y)$ is a directing vector of $M_0M_1$ in $\cmplx^2$. Hence we have $M_0M_1 = \ell_w$ (in the notations of Lemma \ref{lemma_conique_duale}) where
$$w = (v_y,-v_x,v_xy_0-v_yx_0).$$
It allows us to compute
$$\transp wB_sw = (a^2+s)v_y^2+(b^2+s)v_x^2-\left(v_xy_0-v_yx_0\right)^2.$$
Using the fact that $M_0$ lies on the ellipse, that is, substituting 
$$a^2=x_0^2+\frac{a^2y_0^2}{b^2}, \qquad b^2=y_0^2+\frac{b^2x_0^2}{a^2}$$
to the above formula, we get
$$\transp wB_sw = sq(v)+(ab)^2\left(\frac{x_0v_x}{a^2}+\frac{y_0v_y}{b^2}\right)^2.$$
Hence the previous quantity $P(M_0,v)$ defined in Proposition \ref{prop_invariant} appears here again, since
$$\transp wB_sw = q(v)\left(s+(ab)^2P(M_0,v)\right).$$
Hence if $\lambda=-(ab)^2P(M_0,v)$, it is the only $\lambda$ for which $\transp wB_{\lambda}w=0$. And since $P(M_0,v)=P(T)$ doesn't depend on the choice of the index $j$ of a $M_j$, the same computations are true for all lines $M_jM_{j+1}$.
Thus, since $P(T)\neq a^{-2},b^{-2}$ by corollary \ref{cor_valeurs_interdites}, we have $a^2+\lambda\neq0$ and $b^2+\lambda\neq0$, $B_{\lambda}$ is invertible and $B_{\lambda}\ante$ defines the conic $\class_{\lambda}$. The above equality $\transp wB_{\lambda}w=0$ implies that all $M_jM_{j+1}$ are tangent to $\class_{\lambda}$.
\end{proof}

Now let us prove the second part of Theorem \ref{main_theorem_caustics}. It is a consequence of Proposition \ref{prop_tangency_billiard} which comes later. But first we will need the following

\begin{lemma}
\label{lemma_tangente_constante}
Let $\lambda\in\cmplx$ be such that $a^2+\lambda\neq0$ and $b^2+\lambda\neq0$. Then each line $M_0M_1$ which is tangent to $\class_{\lambda}$, where $M_0\neq M_1\in\ellipse$, is such that
\begin{enumerate}
	\item $M_0$ or $M_1$ is finite; 
	\item the line $M_0M_1$ is non-isotropic;
	\item $\lambda = -(ab)^2P(M_j,v)$\\ where $v$ is directing $M_0M_1$ and $M_j$ is a finite point among $M_0$,$M_1$.
\end{enumerate}
\end{lemma}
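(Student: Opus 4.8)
The plan is to prove the three assertions of Lemma~\ref{lemma_tangente_constante} in order, reducing everything to the dual-conic characterization of tangency from Lemma~\ref{lemma_conique_duale} together with the computation of $\transp w B_s w$ already carried out in the proof of Proposition~\ref{prop_ellipse_inscrite}. The guiding idea is that the single algebraic identity
\begin{equation}\label{equ_identity_recall}
\transp w B_s w = q(v)\left(s+(ab)^2 P(M_j,v)\right),
\end{equation}
valid whenever $M_j=(x_j,y_j)\in\ellipse$ is finite and $v$ directs the line $M_jM_{1-j}$ with $w=(v_y,-v_x,v_xy_j-v_yx_j)$, encodes all three conclusions at once, so the work is to read them off carefully while handling the degenerate configurations where a vertex lies at infinity.

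First I would dispose of assertion~(1). Suppose for contradiction that both $M_0$ and $M_1$ are infinite, i.e. both lie on $\ellipse\cap\inftyline$. The two points of $\ellipse$ on the infinity line are the isotropic points $(a:\pm ib:0)$, so the line $M_0M_1$ would be $\inftyline$ itself. But $\inftyline$ is isotropic (it contains $I$ and $J$) and, more to the point, is never tangent to a smooth affine conic $\class_{\lambda}$ with $a^2+\lambda,b^2+\lambda\neq0$; this is immediate from Lemma~\ref{lemma_conique_duale} since the dual point of $\inftyline$ is $(0:0:1)$ and one checks $\transp(0,0,1)B_{\lambda}\ante(0,0,1)=-1\neq0$ (equivalently, a smooth ellipse meets the infinity line in two distinct points, so $\inftyline$ is a secant, not a tangent). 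Hence at least one vertex is finite, and we fix a finite one as $M_j$.

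Next, for assertion~(2), I would argue that the tangency hypothesis forces $q(v)\neq0$. Since the line is tangent to $\class_{\lambda}$, Lemma~\ref{lemma_conique_duale} applied to $B_{\lambda}$ gives $\transp w B_{\lambda}w=0$, and substituting into the identity~\eqref{equ_identity_recall} yields $q(v)\bigl(\lambda+(ab)^2P(M_j,v)\bigr)=0$. If the line were isotropic we would have $q(v)=0$, and I must rule this out. The clean way is to observe that the isotropic tangent lines of $\class_{\lambda}$ are exactly the four isotropic tangent lines of $\ellipse$ (they share foci, by the Remark following the foci discussion, hence share isotropic tangents), and their tangency points on $\ellipse$ are the isotropic tangency points~\eqref{equ_tang_points}; a chord $M_0M_1$ of $\ellipse$ joining two \emph{distinct} points $M_0\neq M_1$ cannot coincide with such a tangent line, which touches $\ellipse$ at a single point. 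Thus $q(v)\neq0$, proving~(2); and since the second factor in~\eqref{equ_identity_recall} must then vanish, assertion~(3) follows at once as $\lambda=-(ab)^2P(M_j,v)$.

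The main obstacle I anticipate is entirely in the infinite-vertex bookkeeping for assertion~(3): when exactly one of $M_0,M_1$ is at infinity, the quantity $P$ is only defined at the finite vertex, and I must check that the identity~\eqref{equ_identity_recall} is still the correct expression there—i.e. that the directing vector $v$ of the finite line $M_jM_{1-j}$ is the right affine object even though $M_{1-j}$ is the ideal point $(a:\pm ib:0)$, and that $q(v)\neq0$ still holds (the side is non-isotropic by~(2), so $v$ is not proportional to $(1,\pm i)$). This requires confirming that the line $M_jM_{1-j}$ is genuinely finite and non-isotropic, which is the content of~(2) together with the fact that the ideal point of $\ellipse$ is isotropic while $v$ is not. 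Once the finiteness and non-isotropy are secured, the algebra of~\eqref{equ_identity_recall} is unchanged and~(3) is immediate; no further computation beyond what Proposition~\ref{prop_ellipse_inscrite} already established is needed.
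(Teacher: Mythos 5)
Your proof is correct and takes essentially the same route as the paper's: non-isotropy follows because a tangent isotropic line to $\class_{\lambda}$ would be one of the shared isotropic tangent lines of the confocal family, hence tangent to $\ellipse$ and unable to contain two distinct points $M_0\neq M_1$; finiteness follows because $\inftyline$ is not tangent to $\class_{\lambda}$; and assertion (3) is read off from the identity $\transp w B_s w = q(v)\left(s+(ab)^2P(M_j,v)\right)$ of Proposition \ref{prop_ellipse_inscrite}. Your write-up is merely more explicit than the paper's short proof (in particular the careful bookkeeping when one vertex is at infinity, which the paper leaves implicit), with only a harmless notational slip in using $B_{\lambda}\ante$ where the tangency criterion is $\transp w B_{\lambda} w=0$ — the computed value $-1$ is the same either way.
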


\begin{proof}
First notice that this line is non-isotropic because otherwise it would be tangent to $\class_{\lambda}$, hence to $\ellipse$, and we could not have $M_0\neq M_1$.

Furthermore, it is not the infinity line (which is not tangent to $\class_{\lambda}$), hence $M_0$ or $M_1$ is finite. Therefore, the lemma results from the computation analogous to that of the proof of Proposition \ref{prop_ellipse_inscrite} for the computation of $\transp wB_sw$.
\end{proof}

\begin{proposition}
\label{prop_tangency_billiard}
Let $\lambda\in\cmplx$ be such that $a^2+\lambda\neq0$ and $b^2+\lambda\neq0$. Then each $n-$uplet of points $T = (M_0,\ldots,M_n)\in\ellipse^n$, two consecutive points being distinct, such that for all $j\in\{1,\ldots,n-1\}$ the sides $M_{j-1}M_j$ and $M_jM_{j+1}$ realize the two tangent lines to $\class_{\lambda}$ going through $M_j$, is a non-degenerate and non-isotropic piece of billiard trajectory, with $\lambda = -(ab)^2P(T)$, whose sides avoid the foci of $\ellipse$.
\end{proposition}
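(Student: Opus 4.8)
The plan is to assemble the characterizations already established, reading the tangency hypothesis as a statement about the invariant $P$ and then invoking Lemmas \ref{lemma_carac_billiard} and \ref{lemma_carac_billiard_2} to recover the reflection law at every interior vertex. First I would record two immediate consequences of Lemma \ref{lemma_tangente_constante}, applied to each side $M_{j-1}M_j$ (a line joining the distinct points $M_{j-1},M_j\in\ellipse$ and tangent to $\class_{\lambda}$): every side is non-isotropic, so $T$ is non-isotropic; and each side has at least one finite endpoint, with $-(ab)^2P(M,v)=\lambda$ whenever $M$ is a finite endpoint and $v$ directs that side. In particular $P$ takes the constant value $-\lambda/(ab)^2$ on all sides, which is precisely the hypothesis of the invariant-based characterizations.

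Next I would establish the reflection law at a fixed interior vertex $M_j$, $j\in\{1,\dots,n-1\}$, distinguishing whether $M_j$ is finite or infinite. If $M_j$ is finite, Lemma \ref{lemma_tangente_constante} gives $P(M_j,v_1)=P(M_j,v_2)=-\lambda/(ab)^2$, where $v_1,v_2$ direct the two sides $M_{j-1}M_j$ and $M_jM_{j+1}$; since these realize the two distinct tangent lines to $\class_{\lambda}$ through $M_j$ we have $\ell_1\neq\ell_2$, and Lemma \ref{lemma_carac_billiard} (whose hypotheses hold once $T_{M_j}\ellipse$ is non-isotropic, checked below) forces the two sides to be symmetric with respect to $T_{M_j}\ellipse$. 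If $M_j$ is infinite, then since two consecutive sides of a non-isotropic trajectory cannot both be infinite, the neighbours $M_{j-1},M_{j+1}$ are finite; the equality $P(M_{j-1},v_1)=P(M_{j+1},v_2)$ coming again from Lemma \ref{lemma_tangente_constante} lets me apply Lemma \ref{lemma_carac_billiard_2}, whose alternative $M_{j-1}=M_{j+1}$ would force the two sides to coincide and is therefore excluded by $\ell_1\neq\ell_2$, leaving the symmetry of $M_{j-1}M_j$ and $M_jM_{j+1}$ with respect to $T_{M_j}\ellipse$.

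It remains to verify the standing conditions and the final identity. For the non-isotropy of $T_{M_j}\ellipse$ (condition for a non-degenerate trajectory), I would argue that $T_{M_j}\ellipse$ is isotropic exactly when $M_j$ is an isotropic tangency point of $\ellipse$; but then $M_j$ would lie on a shared isotropic tangent line of $\ellipse$ and $\class_{\lambda}$, making one of the tangent lines to $\class_{\lambda}$ through $M_j$—hence one side—isotropic, contradicting the non-isotropy already obtained. Together with $M_j\neq M_{j+1}$ (hypothesis) and the reflection law just proved, this shows $T$ is a non-degenerate non-isotropic piece of billiard trajectory, so $P(T)$ is defined and equals $-\lambda/(ab)^2$, i.e. $\lambda=-(ab)^2P(T)$. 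Finally, that the sides avoid the foci follows from the remark after Proposition \ref{prop:dual_line_conics}: a line through a real (resp. complex) focus is tangent to $\class_{\lambda}$ only for $\lambda=-b^2$ (resp. $\lambda=-a^2$), both excluded by $a^2+\lambda\neq0$ and $b^2+\lambda\neq0$.

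The main obstacle is the bookkeeping around a vertex at infinity: one must confirm that a non-isotropic trajectory cannot have two consecutive infinite vertices (else the corresponding side would be the isotropic infinity line), that the tangent line at an infinite point of $\ellipse$ is genuinely non-isotropic (using that $\ellipse$ is not a circle), and that the degenerate alternative $M_{j-1}=M_{j+1}$ in Lemma \ref{lemma_carac_billiard_2} is ruled out by the distinctness of the two tangent lines—so that the infinite-vertex case really does fall under the reflection conclusion rather than a spurious coincidence.
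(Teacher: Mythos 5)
Your overall route is the same as the paper's: Lemma \ref{lemma_tangente_constante} gives the non-isotropy of the sides, a finite endpoint on each side, and the constancy of $P$; Lemmas \ref{lemma_carac_billiard} and \ref{lemma_carac_billiard_2} then convert this into the reflection law; and foci-avoidance comes from $\lambda\neq-a^2,-b^2$. Your use of the remark following Proposition \ref{prop:dual_line_conics} for the last point is a valid substitute for the paper's appeal to Corollary \ref{cor_valeurs_interdites}, and your explicit check that $T_{M_j}\ellipse$ is non-isotropic is a detail the paper's proof leaves implicit.

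However, there is one genuine gap, and it is precisely the case to which the paper devotes the core of its proof. You dismiss the alternative $\ell_1=\ell_2$ (equivalently $M_{j-1}=M_{j+1}$) produced by Lemmas \ref{lemma_carac_billiard} and \ref{lemma_carac_billiard_2} by reading the hypothesis as saying the two sides are \emph{distinct} tangent lines; the word ``distinct'' is yours, not the statement's. The pair of tangent lines to $\class_{\lambda}$ drawn from a point $M_j\in\ellipse$ degenerates to a single (double) line exactly when $M_j\in\class_{\lambda}\cap\ellipse$, and then the hypothesis is satisfied with $M_{j-1}M_j=M_jM_{j+1}=T_{M_j}\class_{\lambda}$ and $M_{j-1}=M_{j+1}$. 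This configuration cannot be legislated away: it is the complex analogue of a real trajectory tangent to a confocal hyperbola that hits a point of $\class_{\lambda}\cap\ellipse$ and retro-reflects, and it must be covered for Proposition \ref{prop_tangency_billiard} to combine with Proposition \ref{prop_ellipse_inscrite} into the full equivalence of Theorem \ref{main_theorem_caustics}. The paper treats it by showing the coincidence case is itself a billiard reflection: if $M_{j-1}M_j=M_jM_{j+1}$, then there is only one tangent line to $\class_{\lambda}$ through $M_j$, so $M_j\in\class_{\lambda}\cap\ellipse$ and the common side is $T_{M_j}\class_{\lambda}$, which by Lemma \ref{lemma_common_points} is orthogonal to $T_{M_j}\ellipse$ and hence symmetric to itself with respect to that mirror. (At an infinite vertex your exclusion can be repaired, since the common points of $\class_{\lambda}$ and $\ellipse$ are finite for $\lambda\neq0$, so coincidence cannot occur there; but at a finite vertex the case is real and your argument silently skips it.) Adding this argument closes the gap; the rest of your proposal stands.
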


\begin{proof}
Lemma \ref{lemma_tangente_constante} implies that the sides $M_jM_{j+1}$ of $T$ are non-isotropic and for each $j$, at least one point among $M_j$ or $M_{j+1}$ is finite. 

Furthermore, the quantity $P(M_j,v)$ doesn't depend on the finite point $M_j$ of $T$ or on the vector $v$ directing $M_{j-1}M_j$ or $M_jM_{j+1}$. 

Therefore for each $j$, we have two possibilities by Lemmas \ref{lemma_carac_billiard} and \ref{lemma_carac_billiard_2}: either $M_{j-1}M_j=M_jM_{j+1}$ or both lines are symmetric with respect to $T_{M_j}\ellipse$.

Let us show that the former case is a subcase of the latter case. Indeed, if there is a $j$ such that $M_{j-1}M_j=M_jM_{j+1}$, then by properties of conics $M_{j-1}=M_{j+1}$. This implies that there exists only one tangent line to $\class_{\lambda}$ going through $M_j$. Hence $M_j\in\class_{\lambda}\cap\ellipse$ and $M_{j-1}M_j$ is the tangent line $T_{M_j}\class_{\lambda}$, which is orthogonal to the tangent line $T_{M_j}\ellipse$ by Lemma \ref{lemma_common_points}. Thus $M_{j-1}M_j$ and $M_jM_{j+1}$ are symmetric with respect to $T_{M_j}\ellipse$.

Hence for each $j$ we have a billiard reflection. Finally the sides avoid the foci, since $\lambda\neq -a^2,-b^2$ and by Corollary \ref{cor_valeurs_interdites}. This concludes the proof.
\end{proof}

\section{Proof of theorem \ref{main_theorem_orbits}}
\label{sec_n_orbits}

The finiteness of the number of conics $\gamma^n_j$, which we will call \textit{caustics}, is not difficult to prove. For a fixed integer $n\geq 3$, the set $\mathcal{T}_n$ of non-degenerate $n-$periodic orbits is an open set of an algebraic curve of $\ellipse^n\simeq\left(\cp^1\right)^n$ (otherwise we could find an open set of inital conditions $(M_0,M_1)\in\ellipse^2$ corresponding to $n-$periodic orbits, contradicting the real case). This curve has then a finite number of irreducible components. Now, for a fixed caustic, the set of $n-$periodic orbits circumscribed about it is an irreducible algebraic curve included in $\mathcal{T}_n$ (this follows from the fact that each $n-$periodic orbit is uniquely defined by its initial condition $(\ell,M_0)$, where $\ell=M_0M_1$ is a line through $M_0$ that is tangent to the given caustic, and the space of initial conditions is an elliptic curve, see \cite{poncGH}). But two different caustics cannot have the same set of circumscribed orbits: otherwise their corresponding constant $P(T)$ would be the same (Proposition \ref{prop_ellipse_inscrite}), which is impossible. Hence there is a finite number of caustics: $\gamma_1^n=\mathcal{C}_{\lambda_1},\ldots,\gamma_N^n=\mathcal{C}_{\lambda_N}$, with pairwise distinct $\lambda_j$ all different from $-a^2$ and $-b^2$.

Our goal now is to estimate the number $N$ of caustics. To do so, we will use Cayley's theorem, proven for example in \cite{poncGH}. We will just give an upper bound on $N$ (Propositions \ref{prop_finite_confocal} and \ref{prop:degree_polynomial}) and explain how we can compute its exact value (Corollary \ref{cor:exact_nb_roots} and Propositions \ref{prop:degree_polynomial} and \ref{prop:forbidden_roots}).

The following theorem of Cayley is needed. We will say that two conics $C$ and $D$ are \textit{in general position} if they intersect (transversally) at four points. Note that if $\lambda\neq 0$, $\mathcal{C}_{\lambda}$ and $\ellipse$ are in general position (see Lemma \ref{lemma_common_points}).

\begin{theorem}[Cayley]
\label{thm_cayley}
Fix $n\geq 3$. Let $C$ and $D$ be quadratic forms defining two regular conics in $\cp^2$ in general position. Write 
$$\sqrt{\det(tC+D)} = A_0+A_1t+A_2t^2\ldots$$
the analytic expansion in $0$ of the holomorphic function $t\mapsto\sqrt{\det(tC+D)}$. Then there is an $n-$sided polygon inscribed in $C$ and circumscribed about $D$ if and only if
$$\left|\begin{matrix}
A_2&\ldots&A_{m+1}\\
\vdots & \ddots & \vdots\\
A_{m+1}&\ldots&A_{2m}
\end{matrix}\right|=0, \qquad \text{with } m=\frac{n-1}{2} \;\; (n\text{ odd}),$$
or
$$\left|\begin{matrix}
A_3&\ldots&A_{m+1}\\
\vdots & \ddots & \vdots\\
A_{m+1}&\ldots&A_{2m-1}
\end{matrix}\right|=0 \qquad \text{with } m=\frac{n}{2} \;\; (n\text{ even}).$$
This condition is reduced to $A_2=0$ when $n=3$ and to $A_3=0$ when $n=4$.
\end{theorem}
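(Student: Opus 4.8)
The plan is to encode the Poncelet closure condition on an auxiliary elliptic curve and then to read off Cayley's determinant from a Riemann--Roch computation. First I would form the pencil $tC+D$ and its discriminant $f(t)=\det(tC+D)$, a cubic in $t$ (with leading coefficient $\det C\neq 0$ since $C$ is regular), and define $E$ as the smooth completion of the affine curve $s^2=f(t)$. Because $C$ and $D$ are in general position, $f$ has three distinct roots, corresponding to the three degenerate members of the pencil (each a pair of lines through two of the four base points of $C\cap D$); hence the double cover $E\to\cp^1_t$ is branched over these three roots together with $t=\infty$, and by Riemann--Hurwitz $E$ is an elliptic curve. Geometrically $E$ is the incidence curve of pairs $(p,\ell)$ with $p\in C$, with $\ell$ a tangent line to $D$, and with $p\in\ell$: the two projections onto $C$ and onto the dual conic $D^{\ast}$ realise $E$ as a double cover of each, and the parameter $t$ records the member of the pencil singled out by $(p,\ell)$.

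Second I would recall that the two sheet-swapping involutions of these projections, say $\iota_1$ (fix $p$, exchange the two tangents to $D$) and $\iota_2$ (fix $\ell$, exchange the two points of $C$ on it), each have fixed points and so take the form $P\mapsto c-P$ on $E$; consequently the Poncelet map $T=\iota_2\circ\iota_1$ is a genuine translation of $E$, say by an element $\tau$. Tracking the fibres identifies $\tau=[Q^{+}]-[Q^{-}]$, where $Q^{\pm}$ are the two points of $E$ lying over $t=0$ (the non-degenerate member $D$, the conic the polygon is tangent to), with $s(Q^{\pm})=\pm\sqrt{\det D}=\pm A_0$. An $n$-sided polygon inscribed in $C$ and circumscribed about $D$ exists precisely when $T^{n}=\mathrm{id}$, that is, when $n\tau\sim 0$ in $\mathrm{Pic}^0(E)$, which holds if and only if there is a rational function $h$ on $E$ with $\operatorname{div}(h)=n[Q^{+}]-n[Q^{-}]$.

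Third, and this is where Cayley's determinant appears, I would manufacture such an $h$ by linear algebra. Since $Q^{+}$ is not a branch point, $t$ is a local parameter there and $s$ has the expansion $s=A_0+A_1t+A_2t^2+\cdots$, which is exactly the expansion of $\sqrt{\det(tC+D)}$ in the statement. One searches for $h$ of the shape $s\,p(t)-q(t)$, with $p,q$ polynomials whose degrees are dictated by the orders of the poles of $s$ and $t$ at $\infty$, such that $h$ vanishes to order $n$ at $Q^{+}$ while having its only pole, of matching order, at $Q^{-}$. Imposing the prescribed vanishing order at $Q^{+}$, after substituting the Taylor series for $s$, becomes a homogeneous linear system in the coefficients of $p$ and $q$; the balance between free coefficients and equations depends on the parity of $n$, and solvability is equivalent to the vanishing of a Hankel determinant in the $A_i$. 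Carrying out the count should yield the $m\times m$ determinant in $A_2,\ldots,A_{2m}$ for $n=2m+1$ and the $(m-1)\times(m-1)$ determinant in $A_3,\ldots,A_{2m-1}$ for $n=2m$, specialising to $A_2=0$ when $n=3$ and $A_3=0$ when $n=4$.

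The main obstacle is precisely this final bookkeeping: fixing the exact degrees of $p$ and $q$, the precise order of vanishing forced at $Q^{+}$, and the resulting matrix size and index range, handled separately for the two parities, so that the linear system's determinant matches the Hankel matrices in the statement. The identification of $\tau$ with $[Q^{+}]-[Q^{-}]$ and the verification that $T$ is a translation rather than merely an automorphism also require attention, but both are routine once the two involutions and their fixed loci are in place.
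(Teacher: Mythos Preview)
The paper does not actually prove Cayley's theorem: it states the result and cites Griffiths--Harris (\emph{L'Enseignement Math\'ematique} 24 (1978), 31--40) for the proof. Your proposal is precisely the Griffiths--Harris argument---encode Poncelet closure as an $n$-torsion condition on the elliptic curve $s^2=\det(tC+D)$, identify the Poncelet step as translation by $[Q^+]-[Q^-]$, and produce the requisite rational function via a linear system whose solvability is governed by the Hankel determinant in the $A_i$. So your approach agrees with the reference the paper defers to, and there is nothing to compare against in the paper itself.
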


\begin{remark}
Note that the determinants we are considering in Theorem \ref{thm_cayley} can be rewritten for $n$ odd and $n$ even respectively as
$$\det (A_{i+j})_{1\leq i,j\leq m}\qquad\text{ and }\qquad\det (A_{i+j+1})_{1\leq i,j\leq m-1}.$$
\end{remark}

\begin{remark}
\label{rem_hom_coord}
We recall that an element of $\cp^2$ can be represented by a triple of the form $(x:y:z)$ where $x,y,z\in\cmplx$ are not all equal to $0$. Be careful that this representation is not unique since $(x:y:z)=(tx:ty:tz)$ for any $t\in\cmplx\etoile$. Then any polynomial $P(x,y)$ of degree $d$ can be associated to a homogeneous polynomial 
$$P^\sharp(x,y,z)=z^dP(\frac{x}{z},\frac{y}{z}).$$
Hence the zeros of $P$ in $\cmplx^2$ can be extended in $\cp^2$ to the set of zeros of $P^\sharp(x,y,z)$. In our case, the conic $\mathcal{C}_{\lambda}$ can be viewed in $\cp^2$ as the set of $(x:y:z)$ such that
$$\frac{x^2}{a^2+\lambda}+\frac{y^2}{b^2+\lambda}-z^2=0.$$
\end{remark}

\begin{proposition}
\label{prop_finite_confocal}
Let $n\geq3$. There is a polynomial $\mathcal{B}^n(\lambda)$ such that any of its roots $\lambda\notin\{-a^2,-b^2\}$ has the following property: there exists an $n-$sided polygon inscribed in $\ellipse$ and circumscribed about $\class_{\lambda}$. 

The degree of $\mathcal{B}^n$ is such that 
$$\deg\mathcal{B}^n\leq\left\{\begin{matrix}
\frac{n^2-1}{4}&\text{if } n \text{ is odd}\\
\frac{n^2}{4}&\text{if } n \text{ is even.}\\
\end{matrix}\right.$$
\end{proposition}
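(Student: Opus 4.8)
The plan is to apply Cayley's theorem (Theorem \ref{thm_cayley}) to the confocal pair $C = \ellipse$ and $D = \class_\lambda$, viewing the quantities $A_k$ in Cayley's expansion as functions of $\lambda$, and to read off $\mathcal{B}^n$ as the Cayley determinant regarded as a polynomial in $\lambda$. The existence of an $n$-sided polygon inscribed in $\ellipse$ and circumscribed about $\class_\lambda$ is equivalent, for $\lambda \neq 0$ (so that the conics are in general position by Lemma \ref{lemma_common_points}), to the vanishing of that determinant; so defining $\mathcal{B}^n(\lambda)$ to be this determinant (suitably cleared of denominators so that it is an honest polynomial) will give a polynomial whose relevant roots produce the desired polygons. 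The degree bound is then purely a bookkeeping computation on how $\lambda$ enters the $A_k$.

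First I would set up the two conics in $\cp^2$ as in Remark \ref{rem_hom_coord}. Using the homogeneous forms, $\ellipse$ is defined by the diagonal matrix with entries $(1/a^2, 1/b^2, -1)$ and $\class_\lambda$ by the diagonal matrix with entries $(1/(a^2+\lambda), 1/(b^2+\lambda), -1)$. To apply Cayley I must compute $\det(tC + D)$ where $C,D$ are these two matrices (or, more conveniently, a scalar multiple clearing the denominators). Since both matrices are diagonal, $\det(tC+D)$ is a product of three linear factors in $t$ whose coefficients depend rationally on $\lambda$, and I would expand $\sqrt{\det(tC+D)}$ as a power series in $t$ to extract the $A_k = A_k(\lambda)$. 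The key observation driving the degree bound is that each $A_k(\lambda)$ is, after clearing a common denominator coming from $(a^2+\lambda)(b^2+\lambda)$, a polynomial in $\lambda$ whose degree grows linearly in $k$; one must track this growth carefully.

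The main step is the degree count. I would argue that the Cayley determinant, which is $\det(A_{i+j})_{1\le i,j\le m}$ with $m=(n-1)/2$ for $n$ odd and $\det(A_{i+j+1})_{1\le i,j\le m-1}$ with $m=n/2$ for $n$ even, is an $(m\times m)$ or $((m-1)\times(m-1))$ determinant in entries each of controlled $\lambda$-degree. After multiplying through by an appropriate power of $(a^2+\lambda)(b^2+\lambda)$ to make the entries polynomial, the degree of the resulting determinant is bounded by the sum of the maximal degrees along a generalized diagonal; summing the per-entry degrees and simplifying produces exactly $\tfrac{n^2-1}{4}$ for $n$ odd and $\tfrac{n^2}{4}$ for $n$ even. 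The hard part will be nailing the precise $\lambda$-degree of each coefficient $A_k(\lambda)$ and verifying that the determinant does not drop degree in a way that would loosen, or the products do not inflate it in a way that would exceed, the claimed bound; this requires a careful analysis of the square-root expansion and of how the diagonal structure of the two conics constrains the growth of the $A_k$. Once $\mathcal{B}^n$ is defined as this cleared determinant and the degree bound is established, the statement about its roots $\lambda \notin \{-a^2,-b^2\}$ producing inscribed-circumscribed polygons follows directly from Cayley's theorem (Theorem \ref{thm_cayley}).
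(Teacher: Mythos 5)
Your proposal follows essentially the same route as the paper's proof: apply Cayley's theorem to the pair $(\ellipse,\class_{\lambda})$, factor the common term $1/\sqrt{(a^2+\lambda)(b^2+\lambda)}$ out of the coefficients $A_k(\lambda)$ so that the cleared coefficients $B_k(\lambda)$ are polynomials in $\lambda$ of degree at most $k$, and bound the degree of the determinant by summing entry degrees along generalized diagonals, yielding $m(m+1)=\frac{n^2-1}{4}$ for $n$ odd and at most $m^2=\frac{n^2}{4}$ for $n$ even. The only difference is one of detail: the paper nails the "linear growth in $k$" claim that you defer via the explicit expansion $B_k(\lambda)=\sum_{u+v+w=k}\frac{c_uc_vc_w}{a^{2u}b^{2v}}(a^2+\lambda)^u(b^2+\lambda)^v$, which immediately gives $\deg_{\lambda}B_k\leq k$.
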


\begin{proof}
Suppose first that $n=2m+1$ is odd and fix a $\lambda$ with $\lambda+a^2,\lambda+b^2\neq0$. To understand if there is an $n-$sided polygon inscribed in $\ellipse$ and circumscribed about $\class_{\lambda}$, we apply Cayley's theorem: there is such a polygon if and only if the determinant
$$\mathcal{A}^n(\lambda) = \left|\begin{matrix}
A_2(\lambda)&\ldots&A_{m+1}(\lambda)\\
\vdots & \ddots & \vdots\\
A_{m+1}(\lambda)&\ldots&A_{2m}(\lambda)
\end{matrix}\right|$$
vanishes, where the $A_i(\lambda)$ are the coefficients in the analytic expansion of 
$$f:t\to\sqrt{\det(tC+D_{\lambda})}$$
with $C$ and $D_{\lambda}$ being quadratic forms respectively associated to $\ellipse$ and to $\class_{\lambda}$. Thus, to prove the result we want, we have to show that the determinant $\mathcal{A}^n(\lambda)$ vanishes for a finite number of $\lambda$. Let us give a more precise fomula of $\mathcal{A}^n(\lambda)$. We have by Remark \ref{rem_hom_coord},
$$tC+D_{\lambda} = \left(\frac{t}{a^2}+\frac{1}{a^2+\lambda}\right)x^2+\left(\frac{t}{b^2}+\frac{1}{b^2+\lambda}\right)y^2-(t+1)z^2$$
hence
$$\det(tC+D_{\lambda}) =-\left(\frac{t}{a^2}+\frac{1}{a^2+\lambda}\right)\left(\frac{t}{b^2}+\frac{1}{b^2+\lambda}\right)(t+1)$$
which we factorize in
$$\det(tC+D_{\lambda}) =-\frac{1}{(a^2+\lambda)(b^2+\lambda)}\left(\frac{a^2+\lambda}{a^2}t+1\right)\left(\frac{b^2+\lambda}{b^2}t+1\right)(t+1).$$
Define the map $g:t\mapsto\sqrt{\left(\frac{a^2+\lambda}{a^2}t+1\right)\left(\frac{b^2+\lambda}{b^2}t+1\right)(t+1)}$ and write its Taylor expansion as
$$g(t) = \sum_{k=0}^{\infty} B_k(\lambda) t^k$$
Since 
$$f(t) = \frac{ig(t)}{\sqrt{(a^2+\lambda)(b^2+\lambda)}}$$
we have
$$A_k(\lambda) =\frac{iB_k(\lambda)}{\sqrt{(a^2+\lambda)(b^2+\lambda)}}.$$
This shows that $\mathcal{A}^n(\lambda)$ is a function of $\lambda$ which vanishes if and only if the determinant 
$$\mathcal{B}^n(\lambda) = \left|\begin{matrix}
B_2(\lambda)&\ldots&B_{m+1}(\lambda)\\
\vdots & \ddots & \vdots\\
B_{m+1}(\lambda)&\ldots&B_{2m}(\lambda)
\end{matrix}\right|$$
also vanishes. We thus need to compute the $B_k$'s. Write
$$\sqrt{t+1} = c_0+c_1t+c_2t^2+\ldots$$
where
\begin{equation}
\label{eq:c_k}
c_k = \frac{1}{k!}\left(\frac{1}{2}\right)\left(\frac{1}{2}-1\right)\ldots\left(\frac{1}{2}-k+1\right) = \frac{(-1)^{k+1}}{4^k(2k-1)}\binom{2k}{k}.
\end{equation}
Therefore for any $\beta$, 
$$\sqrt{\beta t+1} = c_0+c_1\beta t+c_2\beta^2 t^2+\ldots$$
where
\begin{equation}
\label{eq:B_k}
B_k(\lambda) = \sum_{u+v+w=k} \frac{c_uc_vc_w}{a^{2u}b^{2v}}(a^2+\lambda)^u(b^2+\lambda)^v.
\end{equation}
Therefore each $B_k$ is a polynomial in $\lambda$ of degree at least $k$. Hence $\mathcal{B}^n(\lambda)$ is a polynomial in $\lambda$ verifying: for any $\lambda\notin\{-a^2,-b^2\}$, $\mathcal{B}^n(\lambda)=0$ if and only if $\mathcal{A}^n(\lambda)=0$, which is true if and only if there exists an $n-$sided polygon inscribed in $\ellipse$ and circumscribed about $\class_{\lambda}$. 

Now for any permutation $\sigma$ of $\{1,\ldots,m\}$ we have
$$\deg \prod_{j=1}^{m} B_{\sigma(j)+j} = \sum_{j=1}^m \deg B_{\sigma(j)+j} \leq \sum_{j=1}^m \left(\sigma(j)+j\right) = m(m+1)$$
and since $\mathcal{B}^n(\lambda)$ is a sum of $\pm\prod_{j=1}^{m} B_{\sigma(j)+j}$ over all $\sigma$, we have that $\deg \mathcal{B}^n(\lambda) \leq m(m+1) = \frac{n^2-1}{4}$.

Now if $n=2m$ is even, the existence of $\mathcal{B}^n$ is treated exactly as in the case when $n$ is odd, but instead, $\mathcal{B}^n(\lambda)=\det(B_{i+j+1})_{1\leq i,j\leq m-1}$. Hence for any permutation $\sigma$ of $\{1,\ldots,m\}$ we have
$$\deg \prod_{j=1}^{m-1} B_{\sigma(j)+j+1} = m^2$$
and $\deg\mathcal{B}^n\leq m^2=\frac{n^2}{4}$.
\end{proof}

Now we are ready for the proof of Theorem \ref{main_theorem_orbits}:

\begin{proof}[Proof of Theorem \ref{main_theorem_orbits} (without its last statement for $n=3, 4$)]
Let $\mathcal{B}^n$ be the polynomial of Proposition \ref{prop_finite_confocal}. By construction, any root $\lambda$ of $\mathcal{B}^n$ different from $-a^2$ and $-b^2$ corresponds to a $\mathcal{C}_{\lambda}$ inscribed in an $n$-sided polygon $P$ of $\ellipse$. By Theorem \ref{main_theorem_caustics}, $P$ is a $n$-periodic billard orbit and $\mathcal{C}_{\lambda}$ is its caustic, therefore $\lambda=\lambda_j$ for a certain $1\leq j\leq N$. Conversely, all $\lambda_j$ are roots of $\mathcal{B}^n$ since a periodic billard orbit is a polygon.

The first statement of Theorem \ref{main_theorem_orbits} is obvious by theorem \ref{main_theorem_caustics}. The second statement is a consequence of the argument given at the beginning of Section \ref{sec_n_orbits}.

The third statement comes from Poncelet's theorem: by definition of the $\gamma_j$, there is at least one $n-$sided polygon inscribed in $\ellipse$ and circumscribed about $\class_{\lambda}$. Now Poncelet's theorem states that for any $p\in\ellipse$ there exists such a polygon with $p$ as a vertex.
\end{proof}

We can try to know if there are periodic orbits passing through the foci. We recall that in the real case, a billard trajectory going through the real foci accumulates on the (real) foci line.

\begin{proposition}
\label{prop:periodic_foci}
If an $n$-periodic orbit goes through the real (resp. complex) foci, then $n$ is even and its edges are on the real (resp. complex) foci line.
\end{proposition}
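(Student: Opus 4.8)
The plan is to feed the hypothesis into the invariant $P(T)$ of Section~\ref{sec_invariant}: using Proposition~\ref{prop_lien_invariant_foyers} I first force \emph{every} side of the orbit through a real focus, and then I show that the only way such a configuration can close up periodically is the flat orbit carried by the foci line. I treat the real case; the complex case is identical after exchanging the roles of $a$ and $b$ (so that $P(T)=b^{-2}$ and the relevant line is the $y$-axis $\{x=0\}$ joining the complex foci).

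Before invoking $P(T)$ I must know the orbit is non-isotropic. Assume, without loss of generality, that the side $M_0M_1$ passes through a real focus. It is non-isotropic: an isotropic line through a real focus is one of the four isotropic tangent lines of $\ellipse$, hence tangent to $\ellipse$, which would force $M_0=M_1$. Since the reflection with respect to a non-isotropic tangent line is a $q$-isometry, it preserves $q(v)\neq0$ from one side to the next; as the orbit is a cycle containing the non-isotropic side $M_0M_1$, all its sides are non-isotropic. Thus $P(T)$ is well defined, and evaluating it at a finite endpoint of $M_0M_1$, Proposition~\ref{prop_lien_invariant_foyers} gives $P(T)=a^{-2}$. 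Now each side has at least one finite endpoint (two consecutive vertices at infinity would make a side equal to the isotropic line $\inftyline$), and at any finite vertex $M_j$ both incident directions $v$ satisfy $P(M_j,v)=a^{-2}$; by Proposition~\ref{prop_lien_invariant_foyers} they therefore lie in the two-element set of directions $\{M_jF_1,\;M_jF_2\}$. Hence every side passes through a real focus.

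The next step is the focus-alternation dichotomy. At a vertex $M_j$ the two sides either coincide or are the two distinct focal lines $M_jF_1$ and $M_jF_2$. In the coincident case the common side is symmetric to itself with respect to $T_{M_j}\ellipse$, hence equals the normal to $\ellipse$ at $M_j$ (the only line through $M_j$, other than the tangent, fixed by the reflection); a direct check shows the normal meets a real focus only when $M_j=(\pm a,0)$, so the side is the foci line itself. (At an infinite vertex the two sides are parallel and, by the central symmetry $M_{j+1}=-M_{j-1}$ used in the second case of Proposition~\ref{prop_invariant}, pass through the two \emph{distinct} foci.) So at every vertex either the attached focus switches $F_1\leftrightarrow F_2$, or the side is the foci line. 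If some side is the foci line, the Remark stating that one side on a foci line forces all of them gives that every side is the foci line; then all vertices lie in $\ellipse\cap\{y=0\}=\{(\pm a,0)\}$ and, being consecutive-distinct, alternate between these two points, so $M_n=M_0$ forces $n$ even — which is exactly the claim.

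It remains to exclude the alternative: a genuine periodic orbit with no side on the foci line and strictly alternating foci; this is the main obstacle. I would model $\ellipse\cong\cp^1$ by a rational parametrization, under which the focal-chord construction through $F_1$ (resp.\ $F_2$) is the perspectivity involution $\iota_1$ (resp.\ $\iota_2$) of $\cp^1$, whose fixed points are the isotropic tangency points lying on the isotropic tangent lines through that focus. The two-step focal map $R=\iota_2\circ\iota_1$ is then a Möbius transformation fixing the two points $(\pm a,0)$, and a non-flat periodic focal orbit is precisely a nontrivial finite orbit of $R$, which would force the multiplier $\mu$ of $R$ at $(\pm a,0)$ to be a root of unity. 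The crux is to show $\mu$ is not a root of unity: $R$ preserves the real ellipse, and for the real focal billiard every trajectory converges to the major-axis vertices (the classical focusing property), so $\mu$ is real with $|\mu|\neq1$; being an algebraic function of $a,b$ it takes the same value over $\cmplx$, whence $\mu$ is not a root of unity and $R$ has no nontrivial finite orbit. This rules out the alternative, leaving only the flat foci-line orbit and completing the proof.
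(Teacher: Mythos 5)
Your proof is correct, and its decisive step is the same as the paper's: the paper also reduces everything to the two-step focal map (your $R=\iota_2\circ\iota_1$ is exactly its map $f$), viewed as a M\"obius transformation of $\ellipse\simeq\cp^1$ fixing the two major-axis vertices, and concludes by showing these are its only periodic points. You differ in two places, both to your credit. First, where the paper deduces ``every side passes through a focus, alternately, hence $n$ is even'' from the classical (and here unproved, complexified) focus-to-focus reflection law, you rederive it internally from the invariant: $P(T)=a^{-2}$ via Proposition~\ref{prop_lien_invariant_foyers}, followed by the coincide-or-alternate dichotomy at each vertex; this is self-contained and only costs extra case analysis (infinite vertices, coinciding sides). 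Second, and more substantially, the paper rules out nontrivial periodic points by asserting that $f$ is a nontrivial M\"obius transformation ``and so is $f^n$''; taken literally this is a non sequitur, since a nontrivial M\"obius map can have finite order (elliptic case), and $f^n=\mathrm{id}$ would make every point of $\ellipse$ the vertex of a closed focal chain. Your multiplier argument --- $R$ preserves the real ellipse, the classical focusing property makes the vertices attracting/repelling for the real dynamics, so the multiplier is real of modulus $\neq 1$, hence not a root of unity, hence no power of $R$ is the identity --- supplies precisely the justification the paper skips, and is the main added value of your write-up. (The remark that $\mu$ is ``an algebraic function of $a,b$'' is superfluous: $\mu$ is a single complex number and the real dynamics already determines it.)

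Two harmless imprecisions to repair. The normal to $\ellipse$ meets a real focus not only at $(\pm a,0)$ but also at the four isotropic tangency points \eqref{equ_tang_points}, where the ``normal'' coincides with the isotropic tangent line through that focus; your claim survives because vertices of a non-degenerate orbit have non-isotropic tangent lines and so are never such points --- say so explicitly. The same observation is needed when you apply Proposition~\ref{prop_lien_invariant_foyers} at a finite vertex, since that proposition excludes isotropic tangency points.
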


\begin{proof}
Since a line through a focus is reflected into a line through another focus, $n$ has to be even. Now consider the map $f$ from $\ellipse$ to $\ellipse$ defined as follows: for $A$ in $\ellipse$, let $B$ be the other intersection point of $AF_1$ with $\ellipse$ and $C$ the other intersection point of $BF_2$ with $\ellipse$, where $F_1,F_2$ are the real foci of $\ellipse$. A $n$-periodic orbit through the real foci has a vertex $M$ such that $f^n(M)=M$. Now $f$ is a non trivial automorphism of $\ellipse\simeq\cp^1$ hence is a Möbius transform, and so is $f^n$. But such tranform has at least two fixed points. Since we already know two of them to be the vertices of $\ellipse$, the latter correspond to the only possible initial points of periodic orbits through the real foci. The same holds with complex foci.
\end{proof}

From the proof of Theorem \ref{main_theorem_orbits}, we get the

\begin{corollary}
\label{cor:exact_nb_roots}
If the roots of $\mathcal{B}^n$ are simple and different from $-a^2$ and $-b^{2}$, then $N=\deg \mathcal{B}^n$.
\end{corollary}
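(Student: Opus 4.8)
The plan is to read the equality $N=\deg\mathcal{B}^n$ as a root-counting statement for the polynomial $\mathcal{B}^n$, and to use the two hypotheses—simplicity of the roots and their avoidance of $-a^2$ and $-b^2$—to convert a count of roots with multiplicity into a count of distinct caustics.

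First I would recall the dictionary between roots of $\mathcal{B}^n$ and caustics established in the proof of Theorem \ref{main_theorem_orbits}. A value $\lambda\notin\{-a^2,-b^2\}$ is a root of $\mathcal{B}^n$ if and only if $\class_{\lambda}$ is inscribed in some $n$-sided polygon of $\ellipse$; by Theorem \ref{main_theorem_caustics} such a polygon is then an $n$-periodic billiard orbit with caustic $\class_{\lambda}$, so $\lambda\in\{\lambda_1,\ldots,\lambda_N\}$, and conversely each $\lambda_j$ is a root. Because the assignment $\lambda\mapsto\class_{\lambda}$ is injective and the $\lambda_j$ are pairwise distinct (as noted at the start of Section \ref{sec_n_orbits}), the set of roots of $\mathcal{B}^n$ lying outside $\{-a^2,-b^2\}$ is exactly $\{\lambda_1,\ldots,\lambda_N\}$, a set of cardinality $N$.

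Next I would bring in the hypotheses and finish by counting. By the fundamental theorem of algebra $\mathcal{B}^n$ has $\deg\mathcal{B}^n$ roots counted with multiplicity; simplicity makes this number equal to the number of \emph{distinct} roots. The hypothesis that no root equals $-a^2$ or $-b^2$ guarantees that every distinct root already lies in $\{\lambda_1,\ldots,\lambda_N\}$, so that none of them is discarded by landing on an excluded value. Hence the number of distinct roots is precisely $N$, giving $N=\deg\mathcal{B}^n$. There is essentially no obstacle here beyond this bookkeeping: the corollary simply records that, under these genericity assumptions, the upper bound of Proposition \ref{prop_finite_confocal} is attained. The only point that needs care is to invoke the condition ``different from $-a^2,-b^2$'' exactly where it is needed—to ensure that each root is a genuine caustic parameter, so that the correspondence between roots and caustics is a complete bijection.
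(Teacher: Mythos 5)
Your proof is correct and follows essentially the same route as the paper: the paper derives this corollary directly from the proof of Theorem \ref{main_theorem_orbits}, where it is shown that the roots of $\mathcal{B}^n$ outside $\{-a^2,-b^2\}$ are exactly the pairwise distinct $\lambda_1,\ldots,\lambda_N$, so simplicity plus avoidance of $-a^2,-b^2$ gives $N=\deg\mathcal{B}^n$ by counting. Your write-up merely makes explicit the bookkeeping the paper leaves implicit.
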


\begin{remark}
In order to compute the exact value of $N$, we can first try to compute the exact value of $\deg \mathcal{B}^n$ (see Proposition \ref{prop:degree_polynomial}). Then we can understand when the assumptions of Corollary \ref{cor:exact_nb_roots} are satisfied (see Proposition \ref{prop:forbidden_roots}). Unfortunately we still do not know in which cases $\mathcal{B}^n$ has simple roots.
\end{remark}

\begin{proposition}
\label{prop:degree_polynomial}
There exist $r_1,\ldots,r_p\in\real^+$ such that for all $(a,b)$ with $a\notin\{r_1 b,\ldots,r_pb\}$, we have 
$$\deg\mathcal{B}^n=\left\{\begin{matrix}
\frac{n^2-1}{4}&\text{if } n \text{ is odd}\\
\frac{n^2}{4}&\text{if } n \text{ is even.}\\
\end{matrix}\right.$$
\end{proposition}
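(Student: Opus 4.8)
The plan is to extract the coefficient of the highest power of $\lambda$ in $\mathcal{B}^n(\lambda)$ and to show that, viewed as a function of $(a,b)$, it is a nonzero polynomial in the single ratio $a^2/b^2$; its finitely many roots will then furnish the excluded values $r_1,\ldots,r_p$.

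First I would isolate the leading coefficient of each entry $B_k$. By \eqref{eq:B_k} the top power of $\lambda$ is produced by the summands with $w=0$, so $\deg_\lambda B_k=k$ with leading coefficient
$$\beta_k:=\sum_{u+v=k}\frac{c_uc_v}{a^{2u}b^{2v}},$$
which is precisely the coefficient of $t^k$ in $\sqrt{(1+t/a^2)(1+t/b^2)}$. For $n=2m+1$ odd, every permutation $\sigma$ of $\{1,\ldots,m\}$ gives $\sum_{j}(\sigma(j)+j)=m(m+1)$, so every diagonal product $\prod_jB_{\sigma(j)+j}$ has the same degree $m(m+1)$ in $\lambda$, with leading coefficient $\prod_j\beta_{\sigma(j)+j}$; summing over $\sigma$ with signs, the coefficient of $\lambda^{m(m+1)}$ in $\mathcal{B}^n=\det(B_{i+j})_{1\le i,j\le m}$ is the Hankel determinant $\det(\beta_{i+j})_{1\le i,j\le m}$. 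The even case is identical, with $\det(\beta_{i+j+1})_{1\le i,j\le m-1}$. In either parity, $\deg\mathcal{B}^n$ attains the bound of Proposition \ref{prop_finite_confocal} exactly when this Hankel determinant is nonzero.

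I would then use homogeneity to reduce this to one variable. Setting $\alpha=1/a^2$ and $\gamma=1/b^2$, each $\beta_k$ is homogeneous of degree $k$ in $(\alpha,\gamma)$, so the determinant equals $\alpha^{m(m+1)}Q(\gamma/\alpha)$ for a polynomial $Q$ in $s=a^2/b^2$. To see $Q\not\equiv0$ it suffices to evaluate at $\gamma=0$ (the degeneration $b\to\infty$): there only the $v=0$ terms survive, $\beta_k$ becomes $c_k\alpha^k$, and $Q(0)=\det(c_{i+j})_{1\le i,j\le m}$, the Hankel determinant of the Taylor coefficients of $\sqrt{1+t}$. The crux is to show this last determinant is nonzero. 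For that I would invoke the identity $c_j=\tfrac{(-1)^{j-1}}{2\cdot4^{j-1}}C_{j-1}$ for $j\ge1$, with $C_k$ the $k$th Catalan number, coming from $\sqrt{1+t}=1+\tfrac t2\sum_{k\ge0}C_k(-t/4)^k$; pulling the factors $(-1)^i4^{-i}$ and $(-1)^j4^{-j}$ out of the rows and columns turns $\det(c_{i+j})_{1\le i,j\le m}$ into a nonzero scalar times the Catalan Hankel determinant $\det(C_{i+j-1})_{1\le i,j\le m}$ (and $\det(C_{i+j})_{1\le i,j\le m-1}$ in the even case), which are classical and equal respectively to $1$ and to an explicit positive integer, hence nonzero.

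Putting this together, $Q(0)\neq0$, so $Q$ has finitely many roots $s_1,\ldots,s_p$; with $r_j=\sqrt{s_j}$, for all $(a,b)$ such that $a/b\notin\{r_1,\ldots,r_p\}$ the leading coefficient of $\mathcal{B}^n$ is nonzero and its degree is the asserted value. The only genuinely delicate point is the nonvanishing of the Hankel determinant; every other step is formal manipulation of the series. The device that resolves it is the degeneration $b\to\infty$, which collapses the two-variable leading coefficient onto a single Catalan Hankel determinant whose value is classically known to be nonzero.
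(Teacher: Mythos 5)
Your proof follows essentially the same route as the paper's: identify the coefficient of the top power of $\lambda$ in $\mathcal{B}^n$ with the Hankel determinant of the leading coefficients $\beta_k$ of the entries $B_k$, use homogeneity in $(a^{-2},b^{-2})$, and prove nonvanishing by specializing to the pure $a^{-2}$ part (your evaluation at $\gamma=0$, i.e. $b\to\infty$, is exactly the paper's extraction of the coefficient of $a^{-2m(m+1)}$), where the determinant collapses to a Catalan--Hankel determinant with a known nonzero value. For odd $n$ your argument is correct and coincides with the paper's step by step, down to the identity $c_k=\frac{(-1)^{k+1}}{2\cdot 4^{k-1}}\cat_{k-1}$ and the classical evaluation $\det(\cat_{i+j-1})_{1\leq i,j\leq m}=1$.

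The even case, however, contains an off-by-one slip --- one that you share with the paper, and that your more explicit treatment brings to the surface. For $n=2m$ the Cayley matrix is $(m-1)\times(m-1)$ with entries $B_{i+j+1}$, so every diagonal product $\prod_{j=1}^{m-1}B_{\sigma(j)+j+1}$ has $\lambda$-degree at most $\sum_{j=1}^{m-1}(\sigma(j)+j+1)=m^2-1$. The Hankel determinant $\det(\beta_{i+j+1})_{1\leq i,j\leq m-1}$ whose nonvanishing you establish (via $\det(\cat_{i+j})_{1\leq i,j\leq m-1}=m$) is therefore the coefficient of $\lambda^{m^2-1}$, not of $\lambda^{m^2}$; so your computation proves $\deg\mathcal{B}^n=\frac{n^2}{4}-1$ generically for even $n$, and your sentence asserting that the bound $\frac{n^2}{4}$ of Proposition \ref{prop_finite_confocal} is attained cannot be correct --- that bound is never attained. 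The source of the discrepancy lies in the paper itself: in the even case of Proposition \ref{prop_finite_confocal} the sum $\sum_{j=1}^{m-1}(\sigma(j)+j+1)$ is miscounted as $m^2$, and the error propagates into the statement of Proposition \ref{prop:degree_polynomial} (whose proof in the paper treats only odd $n$). The explicit computation of Section \ref{sec_4_orbits} confirms the corrected value: $\mathcal{B}^4$ has degree $3=\frac{4^2}{4}-1$, not $4$. In short, your argument is sound, but its conclusion for even $n$ is $\deg\mathcal{B}^n=\frac{n^2}{4}-1$, and both your final claim and the proposition's statement should be amended accordingly.
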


\begin{remark}
We show in Proposition \ref{prop:circle_polynomial} that $p\geq 1$, more precisely that $1$ belongs to the collection of $\{r_1,\ldots,r_p\}$.
\end{remark}

\begin{proof}
Suppose $n=2m+1$ is odd. By Equation \eqref{eq:B_k}, $B_k$ is of degree $\leq k$ and the coefficient in front of $\lambda^k$ is
\begin{equation}
\label{eq:domin_B_k}
d(B_k)=\sum_{u+v=k} \frac{c_uc_v}{a^{2u}b^{2v}}=\frac{(-1)^k}{4^{k}}\sum_{u+v=k} \frac{1}{a^{2u}b^{2v}(2u-1)(2v-1)}\binom{2u}{u}\binom{2v}{v}.
\end{equation}
Fix a permutation $\sigma$ of $\{1,\ldots,m\}$. We have
$$\deg \prod_{j=1}^{m} B_{\sigma(j)+j} = \sum_{j=1}^m \deg B_{\sigma(j)+j} \leq \sum_{j=1}^m \left(\sigma(j)+j\right) = m(m+1)$$
and the coefficient in front of $\lambda^{m(m+1)}$ is $\prod_{j=1}^{m} d(B_{\sigma(j)+j})$.
Since $\mathcal{B}^n(\lambda)$ is a sum of $\pm\prod_{j=1}^{m} B_{\sigma(j)+j}$ over all $\sigma$, we have that $\deg \mathcal{B}^n(\lambda) \leq m(m+1)$, and the coefficient in front of $\lambda^{m(m+1)}$ is
$$d_n(a,b)=\left|\begin{matrix}
d(B_2)&\ldots&d(B_{m+1})\\
\vdots & \ddots & \vdots\\
d(B_{m+1})&\ldots&d(B_{2m})
\end{matrix}\right|.$$
Let us show that $d_n(a,b)\neq 0$ except for specific $(a,b)$ as described in Proposition \ref{prop:degree_polynomial}. Note first that each $d(B_k)$ is a homogeneous polynomial in $(a^{-1},b^{-1})$ of degree $2k$, and by Equation \eqref{eq:domin_B_k} the coefficient in front of $a^{-2k}$ is 
$$\frac{(-1)^{k+1}}{4^{k}(2k-1)}\binom{2k}{k}=2\frac{(-1)^{k+1}}{4^{k}}\cat_{k-1}$$
where $\cat_k=\frac{1}{k+1}\binom{2k}{k}$ is the $k$-th Catalan number.

Now by linearity of the determinant, $d_n(a,b)$ is also a homogeneous polynomial in $(a^{-1},b^{-1})$, and we apply the same procedure as before: for any permutation $\sigma$ of $\{1,\ldots,m\}$, we have 
$$\deg \prod_{j=1}^{m} d(B_{\sigma(j)+j}) = \sum_{j=1}^m \deg d(B_{\sigma(j)+j}) = \sum_{j=1}^m 2\left(\sigma(j)+j\right) = 2m(m+1)$$
and the coefficient in front of $a^{-2k}$ is
$$\prod_{j=1}^{m} 2\frac{(-1)^{j+\sigma(j)+1}}{4^{j+\sigma(j)}}\cat_{j+\sigma(j)-1}=\frac{(-1)^{m(m+2)}2^m}{4^{m(m+1)}}\prod_{j=1}^{m}\cat_{j+\sigma(j)-1}.$$
Since $d_n(a,b)$ is a sum of $\pm\prod_{j=1}^{m} d(B_{\sigma(j)+j})$ over all $\sigma$, we have that $\deg d_n(a,b) \leq 2m(m+1)$, and the coefficient in front of $a^{-2m(m+1)}$ is
$$\frac{(-1)^{m}}{2^{m(2m+1)}}\det H_m$$
where $H_m$ is a Hankel matrix of the sequence $(\cat_{k+1})_k$ defined as
$$H_m = \left(\begin{matrix}
\cat_1&\cat_2&\cdots&\cat_m\\
\cat_2&\cat_3&&\\
\vdots&&\ddots&\vdots\\
\cat_m&&\cdots&\cat_{2m}
\end{matrix}\right).$$
There are not so easy methods to show that $\det H_m=1$, see for example \cite{Krattenthaler}, Theorem $33$. Hence $d_n(a,b)$ is a nonzero homogeneous polynomial in $(a^{-1},b^{-1})$ and therefore there exists a at most finite collection of numbers $r_1,\ldots,r_p\in\real^+$ such that for all $a,b>0$, we have $d_n(a,b)=0$ if and only if $a\in\{r_1b,\ldots,r_pb\}$.
\end{proof}

We can ask the question if $\deg\mathcal{B}^n$ has always the maximal value (described in Proposition \ref{prop:degree_polynomial}), or if we can find $a,b>0$ such that $\deg\mathcal{B}^n$ is less than the value in Proposition \ref{prop:degree_polynomial}. Proposition \ref{prop:circle_polynomial} asserts that we can find indeed such $a,b$ by looking at the case of the circle ($a=b$).

\begin{proposition}
\label{prop:circle_polynomial}
When $a=b$ (in the case of the circle), 
$$\deg\mathcal{B}^n=\left\{\begin{array}{cl}
\frac{n-1}{2}&\text{if } n \text{ is odd}\\
\frac{n}{2}-1&\text{if } n \text{ is even.}\\
\end{array}\right.$$
\end{proposition}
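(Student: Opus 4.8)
The plan is to specialize the generating function $g$ from the proof of Proposition \ref{prop_finite_confocal} to the circle case $a=b$ and exploit the factorization that appears. When $a=b$ the two factors $\frac{a^2+\lambda}{a^2}t+1$ and $\frac{b^2+\lambda}{b^2}t+1$ coincide, so setting $\beta = 1+\lambda/a^2$ we get
$$g(t) = \sqrt{(\beta t+1)^2(t+1)} = (\beta t+1)\sqrt{t+1},$$
the last equality holding for the branch with $g(0)=1$. Expanding $\sqrt{t+1} = \sum_k c_k t^k$ with the $c_k$ of \eqref{eq:c_k} and reading off the coefficient of $t^k$, I would obtain
$$B_k(\lambda) = c_k + \beta\, c_{k-1} = (c_k+c_{k-1}) + \frac{c_{k-1}}{a^2}\,\lambda \qquad (k\geq 1).$$
Thus in the circle case each $B_k$ is a polynomial of degree exactly $1$ in $\lambda$, with leading coefficient $c_{k-1}/a^2\neq 0$. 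This collapse from the generic degree $k$ (used in Proposition \ref{prop:degree_polynomial}) is the whole structural input.

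Next I would compute the degree of the defining determinant directly. In the odd case $n=2m+1$, $\mathcal{B}^n$ is the determinant of the $m\times m$ matrix $(B_{i+j})_{1\le i,j\le m}$ of affine polynomials, whence $\deg\mathcal{B}^n\leq m=(n-1)/2$; likewise in the even case $n=2m$ one has $\mathcal{B}^n=\det(B_{i+j+1})_{1\le i,j\le m-1}$ and $\deg\mathcal{B}^n\leq m-1 = n/2-1$. To prove equality I would identify the coefficient of the top power of $\lambda$: for a matrix with entries affine in $\lambda$, the coefficient of $\lambda^{(\mathrm{size})}$ in the determinant equals the determinant of the matrix of leading coefficients. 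Using that the leading coefficient of $B_k$ is $c_{k-1}/a^2$, this top coefficient is $a^{-2m}\det(c_{i+j-1})_{1\le i,j\le m}$ in the odd case and $a^{-2(m-1)}\det(c_{i+j})_{1\le i,j\le m-1}$ in the even case. So the statement reduces to the nonvanishing of these two Hankel determinants in the sequence $(c_k)$; note the relevant indices are always $\geq 1$, so $c_0$ never intervenes.

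Finally I would evaluate these Hankel determinants by linking $(c_k)$ to the Catalan numbers exactly as in Proposition \ref{prop:degree_polynomial}. From \eqref{eq:c_k} one verifies the identity $c_k = 2(-1)^{k+1}4^{-k}\cat_{k-1}$ for $k\geq 1$, equivalent to $\frac{1}{2k-1}\binom{2k}{k}=2\cat_{k-1}$. Factoring the scalars $(-1)^{i}4^{-i}$ out of rows and columns, each Hankel determinant becomes a nonzero constant times a Hankel determinant of Catalan numbers: $\det(c_{i+j-1})_{1\le i,j\le m}$ is a nonzero multiple of $\det(\cat_{i+j})_{0\le i,j\le m-1}$, and $\det(c_{i+j})_{1\le i,j\le m-1}$ a nonzero multiple of $\det(\cat_{i+j+1})_{0\le i,j\le m-2}$. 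Both Catalan Hankel determinants equal $1$ (the same classical evaluations underlying $\det H_m=1$, see \cite{Krattenthaler}), so both leading coefficients are nonzero and the claimed degrees follow. The main obstacle is precisely this determinant evaluation, but since the two Hankel determinants of Catalan numbers that arise are exactly the classical ones equal to $1$, the difficulty is absorbed by the cited result, and the remaining work is bookkeeping of indices and scalar factors.
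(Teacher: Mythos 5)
Your proposal is correct and takes essentially the same route as the paper: both hinge on the collapse $B_k = c_k + c_{k-1}\left(1+\lambda/a^2\right)$ in the circle case (you obtain it by factoring $g(t)=(\beta t+1)\sqrt{t+1}$, the paper by computing the convolution $\sum_{u+v=w}c_uc_v$ as the Taylor coefficients of $\left(\sqrt{1+t}\right)^2=1+t$ --- the same observation), followed by a degree count on the determinant of a matrix with entries affine in $\lambda$. Where the paper only says that ``using the multilinearity of $\det$, it is not hard to see'' the stated degrees, you make the top coefficient explicit as a Hankel determinant in the $c_k$'s and reduce it, just as in the proof of Proposition \ref{prop:degree_polynomial}, to the classical Catalan evaluations $\det(\cat_{i+j})_{0\le i,j\le m-1}=\det(\cat_{i+j+1})_{0\le i,j\le m-2}=1$; this is the natural completion of the paper's sketch rather than a different argument.
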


\begin{proof}
Suppose $n=2m+1$ is odd. By Equation \ref{eq:B_k}, when $a=b=R$, for $k\geq2$
$$B_k = \sum_{w=0}^k c_{k-w}\left(1+\frac{\lambda}{a^2}\right)^w\sum_{u+v=w}c_uc_v.$$
Let us compute $\sum_{u+v=w}c_uc_v$: it is the Taylor coefficient at $t^w$ of the function $\sqrt{1+t}^2=1+t$, therefore we get that 
$$\sum_{u+v=w}c_uc_v=\left\{\begin{array}{cl}
1&\text{if } 0\leq w\leq 1\\
0&\text{if } w\geq 2.
\end{array}\right.$$
Hence $B_k = c_k+c_{k-1}x$ where $x = 1+\lambda/a^2$. Using the multilinearity of $\det$, it is not hard to see that $\mathcal{B}^n$ is of degree $m$ if $n$ is odd and $m-1$ if $n$ is even.
\end{proof}

\begin{proposition}
\label{prop:forbidden_roots}
There exist a at most finite collection of numbers $r'_1,\ldots,r'_q\in\real^+$ such that for all $(a,b)$ with $a\notin\{r'_1 b,\ldots,r'_qb\}$, $-a^2$ and $-b^2$ are not roots of $\mathcal{B}^n$.
\end{proposition}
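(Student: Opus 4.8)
The plan is to evaluate the polynomial $\mathcal{B}^n$ at $\lambda=-a^2$ and at $\lambda=-b^2$ and to show that, viewed as functions of the ratio $a/b$, these two values are not identically zero. The finiteness of the collection $r'_1,\ldots,r'_q$ then follows at once, since a nonzero one‑variable polynomial has only finitely many roots.

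First I would compute $B_k(-a^2)$ directly from Equation \eqref{eq:B_k}. Putting $\lambda=-a^2$ kills every term with $u\geq 1$, so only the terms with $u=0$ survive, and writing $\beta = 1-a^2/b^2$ one gets
$$B_k(-a^2)=\sum_{v+w=k}c_vc_w\,\beta^v=[t^k]\sqrt{(1+\beta t)(1+t)}.$$
Thus each $B_k(-a^2)$ is a polynomial in the single variable $\beta$ with purely numerical coefficients, of degree exactly $k$ and leading coefficient $c_k$ (the $v=k$ term). Consequently $\mathcal{B}^n(-a^2)$ — the Hankel determinant $\det(B_{i+j}(-a^2))_{1\le i,j\le m}$ when $n=2m+1$, or $\det(B_{i+j+1}(-a^2))_{1\le i,j\le m-1}$ when $n=2m$ — is itself a polynomial in $\beta$ with numerical coefficients. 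The symmetric computation at $\lambda=-b^2$ gives the same structure with $\beta'=1-b^2/a^2$. Since $\beta$ (resp. $\beta'$) depends only on the ratio $a/b$, it suffices to prove that $\mathcal{B}^n(-a^2)$ and $\mathcal{B}^n(-b^2)$ are not the zero polynomial in $\beta$, resp. $\beta'$, each nonzero root contributing at most one positive ratio $a/b$.

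To do so I would extract the top‑degree coefficient in $\beta$, following the sign‑and‑power bookkeeping already used in the proof of Proposition \ref{prop:degree_polynomial}. In the odd case the maximal $\beta$‑degree of $\det(B_{i+j}(-a^2))$ is $\sum_{j}(\sigma(j)+j)=m(m+1)$, attained for every permutation $\sigma$, so the coefficient of $\beta^{m(m+1)}$ equals the Hankel determinant $\det(c_{i+j})_{1\le i,j\le m}$; in the even case the same reasoning yields $\det(c_{i+j+1})_{1\le i,j\le m-1}$. Using the identity $c_k=2(-1)^{k+1}4^{-k}\cat_{k-1}$ established in Proposition \ref{prop:degree_polynomial}, I factor the row and column powers of $(-1)$ and $4$ out of the determinant, reducing it up to a nonzero scalar to a Hankel determinant of Catalan numbers: $\det(\cat_{i+j+1})_{0\le i,j\le m-1}$ in the odd case and $\det(\cat_{i+j+2})_{0\le i,j\le m-2}$ in the even case. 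The former equals $1$ and the latter equals $m$ (both are classical Hankel evaluations, see \cite{Krattenthaler}), so in either case the top coefficient is nonzero and $\mathcal{B}^n(-a^2)$ is a nonzero polynomial in $\beta$. The argument at $\lambda=-b^2$ is identical. Taking the union of the finitely many exceptional ratios arising from $-a^2$ and from $-b^2$ produces the collection $r'_1,\ldots,r'_q$.

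The routine part is the factoring of $\det(c_{\,\cdot})$; the only genuine input is the non‑vanishing of the Catalan Hankel determinants. The mild subtlety, which I expect to be the one point requiring care, is that the even case does not reduce to the determinant $\det H_m=1$ already invoked in the paper but to the shifted one $\det(\cat_{i+j+2})_{0\le i,j\le m-2}=m$, which must be cited separately.
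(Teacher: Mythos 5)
Your proof is correct and follows essentially the same strategy as the paper's: evaluate $\mathcal{B}^n$ at $\lambda=-a^2$ (resp.\ $-b^2$), observe that the result depends only on the ratio $a/b$ up to a nonzero factor, and prove non-vanishing by extracting the top-degree coefficient, which reduces after factoring out signs and powers of $4$ to a Catalan--Hankel determinant. Working with the single variable $\beta=1-a^2/b^2$ instead of the paper's homogeneous polynomial $R_n(a,b)$ in $(a,b)$ is only a cosmetic repackaging; the computations coincide (your $\frac{(-2)^m}{4^{m(m+1)}}\det(\cat_{i+j-1})_{1\le i,j\le m}$ is the paper's $\frac{(-1)^m}{2^{m(2m+1)}}\det H_m$). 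The one genuine difference is that you treat the even case explicitly, and you correctly identify the subtlety there: the paper's proof (like that of Proposition \ref{prop:degree_polynomial}) is written only for odd $n=2m+1$ and invokes only $\det H_m=1$, whereas for $n=2m$ the relevant leading coefficient is the \emph{shifted} Hankel determinant $\det(\cat_{i+j+2})_{0\le i,j\le m-2}=m$, a different (though also classical) evaluation that must be cited separately. This fills in a case the paper leaves implicit, so your write-up is, if anything, more complete on this point.
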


\begin{proof}
Suppose $n=2m+1$ is odd. By Equation \eqref{eq:B_k}, for $k\geq 2$,
\begin{equation}
\label{eq:domin_B_k}
B_k\left(-a^2\right)=\sum_{v+w=k} \frac{c_vc_w}{b^{2v}}(b^2-a^2)^v=\frac{1}{b^{2k}}\sum_{v+w=k} c_vc_wb^{2w}(b^2-a^2)^v = \frac{1}{b^{2k}}P_k(a,b)
\end{equation}
where $P_k(a,b)$ is a homogeneous polynomial in $(a,b)$ of degree $2k$. The coefficient in front of $a^{2k}$ is 
$$(-1)^kc_k=-\frac{1}{4^k(2k-1)}\binom{2k}{k}=-\frac{\cat_{k-1}}{2^{2k-1}}.$$
As in the proof of Proposition \ref{prop:degree_polynomial}, for any permutation $\sigma$ of $\{1,\ldots,m\}$,
$$\prod_{j=1}^{m} B_{\sigma(j)+j}(-a^2)=\prod_{j=1}^{m} \frac{1}{b^{2(\sigma(j)+j)}}P_{\sigma(j)+j}(a,b)=\frac{Q_{\sigma}(a,b)}{b^{2m(m+1)}}$$
where $Q_{\sigma}(a,b)$ is a homogeneous polynomial of degree 
$$\sum_{j=1}^m \deg P_{j+\sigma(j)}=\sum_{j=1}^m 2(\sigma(j)+j) = 2m(m+1)$$
whose coefficient in front of $a^{2m(m+1)}$ is
$$\prod_{j=1}^{m}\left(-\frac{\cat_{j+\sigma(j)-1}}{2^{2(j+\sigma(j))-1}}\right)=\frac{(-1)^m}{2^{m(2m+1)}}\prod_{j=1}^{m}\cat_{j+\sigma(j)-1}.$$
As in proof of Proposition \ref{prop:degree_polynomial}, $\mathcal{B}^n(-a^2)$ is a sum of products of the form $\pm\prod_{j=1}^{m} B_{\sigma(j)+j}(-a^2)$ hence can be written as 
$$\frac{R_n(a,b)}{b^{2m(m+1)}}$$
where $R_n(a,b)$ is the sum of $\varepsilon(\sigma)\prod_{j=1}^{m} Q_{\sigma(j)+j}(a,b)$ and $\varepsilon(\sigma)$ is the parity of $\sigma$. Thus $R_n(a,b)$ is a homogeneous polynomial of degree $2m(m+1)$ whose coefficient in front of $a^{2m(m+1)}$ is 
$$\frac{(-1)^m}{2^{m(2m+1)}}\det H_m = \frac{(-1)^m}{2^{m(2m+1)}}$$ as in the proof of Proposition \ref{prop:degree_polynomial}. Thus $R_n(a,b)$ is a nonzero homogeneous polynomial such that
$$\mathcal{B}^n(-a^2)=\frac{R_n(a,b)}{b^{2m(m+1)}}.$$
We can do the same with $\mathcal{B}^n(-b^2)$ to obtain the same conclusion, which finishes the proof.
\end{proof}


\section{Triangular orbits}
\label{sec_3_orbits}

As in the proof of Theorem \ref{main_theorem_orbits}, any root $\lambda\notin\{-a^2,-b^2\}$ of $\mathcal{B}^3(\lambda)$ corresponds to a caustic defined by $\mathcal{C}_{\lambda}$ for triangular orbits in the complex domain. Therefore we are going to compute $\mathcal{B}^3(\lambda)$ and its roots. This section is very similar to \cite{DragRad2019} p. 17, since we get the same results with a similar method (just notice that the conventions adopted in this paper is different from ours: in \cite{DragRad2019}, $a$ stand for $a^2$, $b$ for $b^2$ and their $\mathcal{C}_{\lambda}$ is the same as our $\mathcal{C}_{-\lambda}$). Yet, our conclusion is a little bit more general thanks to our previous study on complex caustics (Theorem \ref{main_theorem_caustics}). We have
$$\mathcal{B}^3(\lambda) = -\frac{1}{8a^4b^4}\left((a^2 - b^2)^2\lambda^2 - 2(a^4b^2 + a^2b^4)\lambda-3a^4b^4\right)$$
which is a polynomial of second degree. Its roots are
$$\lambda_{\pm} = \frac{a^2b^2(a^2+b^2)\pm 2a^2b^2\sqrt{a^4 - a^2b^2 + b^4}}{(a^2 - b^2)^2}$$
which correspond to the opposite of the solutions found in \cite{DragRad2019} as predicted.

\begin{lemma}
\label{lemma_param_caustics_3}
We have $\lambda_{+}>0$ and $-b^2<\lambda_{-}<0$.
\end{lemma}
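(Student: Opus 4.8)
The plan is to work under the paper's standing convention $a>b>0$ (so that the real foci $(\pm c,0)$ with $c=\sqrt{a^2-b^2}$ are genuine points of the $x$-axis and $\ellipse$ is not a circle), and to establish the three inequalities $\lambda_+>0$, $\lambda_-<0$ and $\lambda_->-b^2$ separately. Throughout I would abbreviate $D:=a^4-a^2b^2+b^4$, recording once that $D>0$ (for instance $D=(a^2-\tfrac12 b^2)^2+\tfrac34 b^4$), so that $\sqrt D$ is a well-defined positive real and the closed form for $\lambda_\pm$ makes sense; the denominator $(a^2-b^2)^2$ is positive since $a\neq b$.

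The sign of $\lambda_+$ is immediate: the denominator is positive and the numerator $a^2b^2(a^2+b^2)+2a^2b^2\sqrt D$ is a sum of positive terms, so $\lambda_+>0$. For $\lambda_-<0$ I would instead show the numerator is negative, i.e. $a^2+b^2<2\sqrt D$. Both sides being positive, I may square; after expansion the inequality reduces to $0<3(a^2-b^2)^2$, which holds because $a\neq b$.

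The main work is the lower bound $\lambda_->-b^2$. Here I would clear the positive denominator $(a^2-b^2)^2$, move the $-b^2$ across, and divide by $b^2>0$ — all reversible operations — to reduce the claim to $2a^4-a^2b^2+b^4>2a^2\sqrt D$. The decisive observation is that the left-hand side equals $a^4+D$, so the inequality is exactly $a^4-2a^2\sqrt D+D>0$, that is $(a^2-\sqrt D)^2>0$; notably no squaring of an inequality is required, only a perfect-square identity. This is strict precisely when $a^2\neq\sqrt D$, equivalently $a^4\neq D$, equivalently $a^2b^2\neq b^4$, which again holds since $a\neq b$.

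The only genuinely delicate point — the part I expect to be the real obstacle — is recognizing that the combination $2a^4-a^2b^2+b^4$ simplifies to $a^4+D$ and thereby completes a square against $2a^2\sqrt D$; once this is spotted, the chain of reductions is purely mechanical. I would also note in passing that the conclusion $-b^2<\lambda_-<0$ automatically forces $\lambda_-\neq-a^2$ (since $-a^2<-b^2$ under $a>b$), so that both roots $\lambda_\pm$ lie in the admissible range $\cmplx\setminus\{-a^2,-b^2\}$ required by Theorem \ref{main_theorem_caustics}.
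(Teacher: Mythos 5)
Your proof is correct and follows essentially the same route as the paper: clear the positive denominator $(a^2-b^2)^2$, and reduce each inequality to a comparison between $2a^2\sqrt{D}$ (with $D:=a^4-a^2b^2+b^4$) and a polynomial in $a,b$. The only difference is in the finish: where the paper verifies the key inequality $2a^2\sqrt{D}<2a^4-a^2b^2+b^4$ by squaring both sides and expanding, you recognize the right-hand side as $a^4+D$ and conclude via the perfect square $\left(a^2-\sqrt{D}\right)^2>0$ — an equivalent but cleaner step, since squaring both sides amounts to checking $\left(a^4-D\right)^2=b^4(a^2-b^2)^2>0$.
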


\begin{proof}
The inequality $\lambda_{-}>-b^2$ is equivalent to 
$$2a^2b^2\sqrt{a^4-2a^2b^2+b^4}-a^2b^2(a^2+b^2)<b^2(a^2-b^2)^2$$
which can be simplified in 
$$2a^2b^2\sqrt{a^4-2a^2b^2+b^4}<2a^4+b^4-a^2b^2.$$
This last inequality is true as we can see by taking the square of both its right and left sides. The same methode can be applied to show that $\lambda_{-}<0$.
\end{proof}

\noindent By Lemma \ref{lemma_param_caustics_3}, the caustics corresponding to $\lambda_{\pm}$, $\mathcal{C}_{\lambda_{+}}$ and $\mathcal{C}_{\lambda_{-}}$, are confocal ellipses which are respectively bigger and smaller than $\ellipse =\mathcal{C}_{0}$. As noticed in \cite{DragRad2019}, $\mathcal{C}_{\lambda_-}$ corresponds to the real motion. And we can add that $\mathcal{C}_{\lambda_+}$ corresponds to no real orbits by convexity. Hence we deduce the following corollary, which is a version of Theorem \ref{main_theorem_orbits} in the case when $n=3$:

\begin{corollary}
\label{cor_deux_caustiques}
There exist two distinct real ellipses $\gamma^3_1 := \class_{\lambda_-}$ and $\gamma^3_2 := \class_{\lambda_+}$ which are confocal to $\ellipse$ by construction, and such that
\begin{itemize}
	\item all complex triangles inscribed in $\ellipse$ and circumscribed about a $\gamma^3_j$ are billiard orbits; 
	\item any complex triangular billiard orbit of $\ellipse$ is circumscribed about either $\gamma^3_1$, or $\gamma^3_2$;
	\item any complex orbit inscribed in $\ellipse$ and circumscribed about a $\gamma^3_j$ is a triangular orbit.
\end{itemize}
\end{corollary}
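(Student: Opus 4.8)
The plan is to treat the three bullet points separately, relying on the explicit roots $\lambda_\pm$ of $\mathcal{B}^3$ together with Theorem \ref{main_theorem_caustics} and Poncelet's theorem. First I would record the preliminary facts that make $\gamma^3_1=\class_{\lambda_-}$ and $\gamma^3_2=\class_{\lambda_+}$ legitimate distinct confocal real ellipses. The discriminant under the square root is strictly positive, since $a^4-a^2b^2+b^4=(a^2-\tfrac12 b^2)^2+\tfrac34 b^4>0$, so $\lambda_+\neq\lambda_-$ and the two ellipses are distinct (note also $(a^2-b^2)^2\neq 0$ as the circle is excluded). Moreover Lemma \ref{lemma_param_caustics_3} gives $\lambda_+>0$ and $-b^2<\lambda_-<0$, whence $a^2+\lambda_\pm>0$ and $b^2+\lambda_\pm>0$; in particular $\lambda_\pm\notin\{-a^2,-b^2\}$, so both conics are smooth real ellipses confocal to $\ellipse$ and Theorem \ref{main_theorem_caustics} applies to each of them.

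For the first bullet, a triangle $(M_0,M_1,M_2)$ inscribed in $\ellipse$ and circumscribed about $\gamma^3_j$ is, by definition, a closed polygon with distinct consecutive vertices whose two sides through each vertex realize the two tangent lines to $\class_{\lambda_j}$ at that vertex. I would read this as the first statement of Theorem \ref{main_theorem_caustics} for the quadruple $(M_0,M_1,M_2,M_0)$ and apply that theorem cyclically (to $(M_0,M_1,M_2,M_0)$ and to its cyclic shifts), so that the reflection law holds at all three vertices, closure included. This yields that the triangle is a non-degenerate, non-isotropic billiard orbit.

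For the second bullet, let $T$ be a complex triangular billiard orbit. Since $n=3$ is odd, Proposition \ref{prop_non_isotropic_orbits} shows that $T$ is non-isotropic and has no side on a foci line, while Proposition \ref{prop:periodic_foci} (odd $n$) forbids any side from passing through a real or complex focus. Theorem \ref{main_theorem_caustics} then produces a unique $\lambda\notin\{-a^2,-b^2\}$ about which $T$ is circumscribed; as $T$ is then a $3$-sided polygon inscribed in $\ellipse$ and circumscribed about $\class_\lambda$, Cayley's theorem (Proposition \ref{prop_finite_confocal}) forces $\mathcal{B}^3(\lambda)=0$. Since $\mathcal{B}^3$ is quadratic with roots exactly $\lambda_\pm$, we get $\lambda\in\{\lambda_-,\lambda_+\}$, i.e. $T$ is circumscribed about $\gamma^3_1$ or $\gamma^3_2$.

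The third bullet is where the real work lies. I would argue that, for a fixed $\gamma^3_j$, the Poncelet map on $\ellipse$ sending a vertex to the next one (following the tangent lines to $\gamma^3_j$) is a translation of the elliptic curve of initial conditions attached to the pencil, as recalled at the start of Section \ref{sec_n_orbits} (see \cite{poncGH}). By the first bullet and the existence of at least one inscribed–circumscribed triangle, this translation has minimal order exactly $3$ (the only proper divisor being $1$, which is excluded because the map is nontrivial); hence every orbit circumscribed about $\gamma^3_j$ closes after exactly three steps and is a genuine triangle. The main obstacle is precisely this \emph{uniformity} of the closure number: one must invoke the translation structure of the Poncelet map to exclude a priori orbits circumscribed about $\gamma^3_j$ with a different number of sides, rather than merely knowing that some triangle exists. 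Combining the three bullets proves the corollary, and in passing re-establishes $N=2$ in agreement with Theorem \ref{main_theorem_orbits} and Corollary \ref{cor:exact_nb_roots}, the roots $\lambda_\pm$ being simple and distinct from $-a^2,-b^2$.
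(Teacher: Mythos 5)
Your proof is correct and takes essentially the same route as the paper, which obtains the corollary by specializing Theorem \ref{main_theorem_orbits} to $n=3$ --- whose proof rests on exactly the ingredients you use (Theorem \ref{main_theorem_caustics}, the Cayley polynomial $\mathcal{B}^3$ of Proposition \ref{prop_finite_confocal}, Propositions \ref{prop_non_isotropic_orbits} and \ref{prop:periodic_foci}, and Poncelet's theorem) --- together with the explicit roots $\lambda_{\pm}$ and Lemma \ref{lemma_param_caustics_3}. The only divergence is one of explicitness: your order-$3$ translation argument for the third bullet spells out a closure/rigidity step that the paper leaves implicit in its appeal to Poncelet's theorem and the elliptic-curve structure of the space of initial conditions.
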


\begin{example}
We consider the case when $a=2$ and $b=1$, see Figure \ref{fig_2caustics}. We compute that
$$\lambda_- = \frac{20-8\sqrt{13}}{9}\approx-0.9827 \quad\text{ and }\quad \lambda_+ = \frac{20+8\sqrt{13}}{9}\approx5.4272.$$
\end{example}

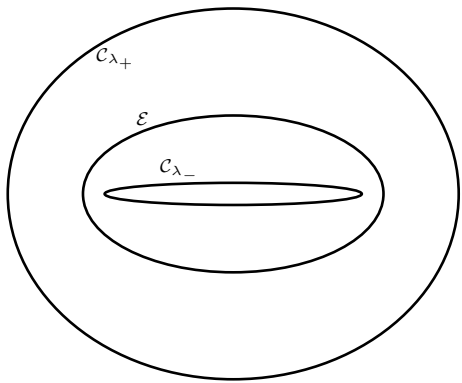
\begin{figure}
\centering
\begin{tikzpicture}[line cap=round,line join=round,>=triangle 45,x=1cm,y=1cm]
\clip(-3.5,-2.7) rectangle (3.5,2.7);
\draw [rotate around={0:(0.0025086145409116416,0)},line width=1pt] (0.0025086145409116416,0) ellipse (1.9974913854590879cm and 1.0432433360133084cm);
\draw [rotate around={0:(0.0025086145409115726,0)},line width=1pt] (0.0025086145409115726,0) ellipse (2.9974913854590883cm and 2.4664426668897765cm);
\draw [rotate around={0:(0.002508614540915727,0)},line width=1pt] (0.002508614540915727,0) ellipse (1.709768500390961cm and 0.1472859398657943cm);
\begin{scriptsize}
\draw[color=black] (-1.2,1) node {$\ellipse$};
\draw[color=black] (-1.5597148670526204,1.8) node {$\mathcal{C}_{\lambda_+}$};
\draw[color=black] (-0.7111058200015711,0.31037283751140887) node {$\mathcal{C}_{\lambda_-}$};
\end{scriptsize}
\end{tikzpicture}
\caption{The initial ellipse with its two caustics $\mathcal{C}_{\lambda_-}$ and $\mathcal{C}_{\lambda_+}$ when $n=3$, $a=2$ and $b=1$.}
\label{fig_2caustics}
\end{figure}

\noindent We can apply Corollary \ref{cor_deux_caustiques} to classify the degenerate triangular orbits:

\begin{proposition}
There are exacty $8$ degenerate triangular orbits, given by an isotropic tangency point $\alpha$ of $\ellipse$ and a point $\beta$ of $\ellipse$ such that $\alpha\beta$ is tangent to $\gamma^3_j$ for some $j=1,2$ and non-isotropic.
\end{proposition}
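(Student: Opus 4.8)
The plan is to read off the shape of a degenerate orbit from Proposition~\ref{lemma_olga}, to identify its data with a pair (isotropic tangency point, caustic), and then to count. First I would take an arbitrary degenerate triangular orbit. By Proposition~\ref{lemma_olga} it has one isotropic side $A$ tangent to $\ellipse$ and two coinciding non-isotropic sides $B$. The contact point of $A$ is one of the four isotropic tangency points $\alpha$ of \eqref{equ_tang_points}; since such an $A$ meets $\ellipse$ only at $\alpha$ (with multiplicity two), the two vertices carried by $A$ must both coincide with $\alpha$. Hence, up to relabelling, the orbit is $(\beta,\alpha,\alpha)$ with a double vertex $\alpha$ and a simple vertex $\beta\in\ellipse$, and the repeated side is $B=\alpha\beta$, which is non-isotropic. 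Because the orbit is a limit of non-degenerate triangular orbits, each circumscribed about $\gamma^3_1$ or $\gamma^3_2$ by Corollary~\ref{cor_deux_caustiques}, and since tangency is a closed condition, the side $\alpha\beta$ is tangent to one of the $\gamma^3_j$. This already shows that every degenerate orbit has the announced form.

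Next I would establish the converse and count. Fix an isotropic tangency point $\alpha$ and an index $j$. Lemma~\ref{lemma_param_caustics_3} gives $\lambda_j\notin\{-a^2,-b^2,0\}$, so $\gamma^3_j$ is smooth and, by Lemma~\ref{lemma_common_points}, does not pass through $\alpha$; thus exactly two tangent lines to $\gamma^3_j$ issue from $\alpha$. The crucial point is that one of them must be discarded: since $\gamma^3_j$ is confocal to $\ellipse$ it shares its four isotropic tangent lines (Remark after \eqref{equ_tang_points}), and the isotropic tangent $T_\alpha\ellipse$ at $\alpha$ is one such common tangent passing through $\alpha$, hence one of the two tangents from $\alpha$. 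As $\alpha$ is not a focus, $T_\alpha\ellipse$ is the only isotropic line through $\alpha$ tangent to $\gamma^3_j$, so the second tangent is non-isotropic and singles out a unique $\beta\neq\alpha$, namely its further intersection with $\ellipse$. This produces exactly one admissible pair $(\alpha,\beta)$ for each of the $4\times 2$ choices of $(\alpha,j)$.

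I would then verify that these eight pairs are pairwise distinct and genuinely realized. Distinct $\alpha$ give distinct double vertices and hence distinct orbits; for a fixed $\alpha$ the lines attached to $j=1$ and $j=2$ differ because a non-isotropic line avoiding the foci is tangent to a unique confocal conic (Proposition~\ref{prop:dual_line_conics}), and $\alpha\beta$ avoids the foci since $\lambda_j\notin\{-a^2,-b^2\}$ (Remark after Proposition~\ref{prop:dual_line_conics}). To see that each pair is an actual degenerate orbit I would realize it as a limit: by Poncelet's theorem together with Corollary~\ref{cor_deux_caustiques} there is, through every point of $\ellipse$ near $\alpha$, a non-degenerate triangular orbit circumscribed about $\gamma^3_j$, and letting the moving vertex tend to $\alpha$ I would argue that the associated triangles converge to $(\beta,\alpha,\alpha)$, the collapsing side tending to $T_\alpha\ellipse$.

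I expect the realizability step to be the main obstacle: passing from \emph{``$(\alpha,\beta)$ satisfies the tangency condition''} to \emph{``$(\alpha,\beta)$ is the limit of honest non-degenerate orbits''} requires controlling the Poncelet family as the vertex approaches the isotropic tangency point, precisely where the reflection law degenerates, and checking that the limiting triple is exactly $(\beta,\alpha,\alpha)$ rather than a further degeneration. By contrast, the structural description and the final count are direct once the common isotropic tangent has been correctly removed from the two tangents through $\alpha$.
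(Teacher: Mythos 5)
Your proposal is correct and takes essentially the same approach as the paper's proof: Proposition \ref{lemma_olga} gives the shape of a degenerate orbit (isotropic side tangent to $\ellipse$, doubled non-isotropic side), closedness of the tangency condition forces that side to be tangent to one of the $\gamma^3_j$ of Corollary \ref{cor_deux_caustiques}, and the count is $4\times 2=8$. The realizability step you single out as the main obstacle is not addressed by the paper's own (very terse) proof either, which only establishes the announced form and the upper bound; your additional care about discarding the common isotropic tangent through $\alpha$, about distinctness of the eight configurations, and about the limiting Poncelet construction goes beyond, rather than falls short of, what the paper records.
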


\begin{proof}
Degenerate triangular orbits are limits of non-degenerate (non-isotropic) triangular orbits, hence are circumscribed about a $\gamma^3_j$. We apply Proposition \ref{lemma_olga}: one side $A$ is isotropic and tangent to a $\gamma^3_j$. This gives only $4$ possible positions. The other side is non-isotropic, which gives two other possibilities ($B$ should be tangent to the same $\gamma_j^3$), once $A$ is fixed.
\end{proof}

\section{$4-$periodic orbits}
\label{sec_4_orbits}

We apply in this section the same ideas as in Section \ref{sec_3_orbits} for $n=4$. We compute $\mathcal{B}^4$ and its roots
$$\mathcal{B}^4(\lambda) = \frac{1}{16a^6b^6}\left((a^2-b^2)^2(a^2+b^2)\lambda^3 + (a^3b-ab^3)^2\lambda^2 - (a^6b^4 + a^4b^6)\lambda-a^6b^6\right).$$
We can check that its roots are
$$\lambda_1 = -\frac{a^2b^2}{a^2-b^2}, \qquad \lambda_2 = -\frac{a^2b^2}{a^2+b^2},\qquad \lambda_3 = \frac{a^2b^2}{a^2-b^2}.$$
They satisfy
\begin{equation}
\label{ineq:lambda_i}
\lambda_1<-b^2<\lambda_2<0<\lambda_3.
\end{equation}
Note also that
\begin{equation}
\label{ineq:lambda_1}
\lambda_1
\left\{\begin{matrix}
&<-a^2&\text{ when } a<\sqrt{2}b\\
&=-a^2&\text{ when } a=\sqrt{2}b\\
&>-a^2&\text{ when } a>\sqrt{2}b\\
\end{matrix}\right.
\end{equation}
Denote by $\mathcal{C}_i$ the caustic $\mathcal{C}_{\lambda_i}$ where $i=1,2,3$, with $\mathcal{C}_i$ not defined when $a=\sqrt{2}b$. By Inequations \eqref{ineq:lambda_i}, $\mathcal{C}_2$ and $\mathcal{C}_3$ are confocal ellipses, respectively smaller and bigger than $\ellipse=\mathcal{C}_0$. By Inequation \eqref{ineq:lambda_1}, the conic $\mathcal{C}_1$ is a hyperbola if and only if $a>\sqrt{2}b$.

\begin{corollary}
\label{cor_trois_caustiques}
When $a\neq\sqrt{2}b$ (resp. when $a=\sqrt{2}b$), there exist three (resp. two) distincts conics $\mathcal{C}_1$, $\mathcal{C}_2$, $\mathcal{C}_3$ (resp. $\mathcal{C}_2$, $\mathcal{C}_3)$ which have the previous described properties, and such that
\begin{itemize}
	\item all complex quadrilaterals inscribed in $\ellipse$ and circumscribed about a $\mathcal{C}_j$ are billiard orbits; 
	\item any complex quadrilateral billiard orbit of $\ellipse$, which do not have its edges on a foci line, is circumscribed about a $\mathcal{C}_j$;
	\item any complex orbit inscribed in $\ellipse$ and circumscribed about a $\mathcal{C}_j$ is a quadrilateral orbit.
\end{itemize}
\end{corollary}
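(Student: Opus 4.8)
The plan is to follow the template of the triangular case (Corollary~\ref{cor_deux_caustiques}): read the caustics off the roots of $\mathcal{B}^4$, decide which are admissible, and transport the three asserted properties from Theorems~\ref{main_theorem_caustics} and~\ref{main_theorem_orbits}. Recall from the proof of Theorem~\ref{main_theorem_orbits} that the caustics of $4$-periodic orbits are exactly the $\mathcal{C}_\lambda$ with $\lambda$ a root of $\mathcal{B}^4$ lying outside $\{-a^2,-b^2\}$. First I would observe that $\lambda_1,\lambda_2,\lambda_3$ are pairwise distinct, which is immediate from~\eqref{ineq:lambda_i} since it places them as $\lambda_1<-b^2<\lambda_2<0<\lambda_3$; in particular none equals $-b^2$, and as $a>b$ forces $-a^2<-b^2$, both $\lambda_2$ and $\lambda_3$ also avoid $-a^2$. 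The only possible obstruction is $\lambda_1=-a^2$, which by~\eqref{ineq:lambda_1} happens precisely when $a=\sqrt{2}b$. This gives the stated dichotomy: three admissible (and, being distinct, simple) roots, hence three distinct caustics $\mathcal{C}_1,\mathcal{C}_2,\mathcal{C}_3$, when $a\neq\sqrt{2}b$; and only $\lambda_2,\lambda_3$, hence $\mathcal{C}_2,\mathcal{C}_3$, when $a=\sqrt{2}b$. Their geometric nature is already recorded just before the statement. Since $\mathcal{B}^4$ is the explicit cubic displayed above, Corollary~\ref{cor:exact_nb_roots} yields $N=\deg\mathcal{B}^4=3$ when $a\neq\sqrt{2}b$, while for $a=\sqrt{2}b$ the inadmissible root $-a^2$ is discarded and a direct count gives $N=2$.

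With the admissible $\lambda_j$ fixed, the first two bullet points are specializations of Theorem~\ref{main_theorem_orbits}. Its first statement gives that every quadrilateral inscribed in $\ellipse$, circumscribed about an admissible $\mathcal{C}_j$ and with distinct consecutive vertices, is a non-degenerate non-isotropic billiard orbit, which is the first bullet. Its second statement gives that every such orbit avoiding the foci is circumscribed about one of the $\mathcal{C}_j$; to reach the phrasing of the second bullet I would feed in Proposition~\ref{prop:periodic_foci}, whose contrapositive shows that an orbit with no edge on a foci line passes through no focus, so the hypothesis of Theorem~\ref{main_theorem_orbits} is met.

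The genuinely new point, and the step I expect to cost the most care, is the third bullet: a Poncelet polygon circumscribed about an admissible $\mathcal{C}_j$ must be a true quadrilateral and not one that closes up sooner. Since each $\mathcal{C}_j$ is smooth and in general position with $\ellipse$, the Poncelet period is a well-defined invariant of the pair $(\ellipse,\mathcal{C}_j)$ dividing $4$, so only periods $1$ and $2$ have to be excluded. Period $1$ means a single point and is forbidden by distinctness of consecutive vertices. To exclude period $2$ I would argue that a $2$-periodic orbit is a chord normal to $\ellipse$ at both endpoints; writing the normal direction as $(x/a^2,y/b^2)$ and imposing normality at both ends forces the endpoints to be aligned with the center, so the chord is a principal axis (a conclusion that survives over $\cmplx$ because $a\neq b$). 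But $\{y=0\}$ meets $\mathcal{C}_\lambda$ where $x^2=a^2+\lambda$ and $\{x=0\}$ where $y^2=b^2+\lambda$, so each axis is a secant, never a tangent, of $\mathcal{C}_\lambda$ unless $\lambda\in\{-a^2,-b^2\}$. As the admissible $\lambda_j$ avoid these values, no $2$-periodic orbit is circumscribed about $\mathcal{C}_j$, and the period is therefore exactly $4$. This settles the last bullet and completes the proof.
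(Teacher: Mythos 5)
Your proof is correct, and for everything the paper itself addresses it takes the same route: the paper states Corollary \ref{cor_trois_caustiques} without any proof, as the $n=4$ specialization of Theorem \ref{main_theorem_orbits} once $\mathcal{B}^4$ and its roots are computed, and your handling of the dichotomy (all three roots are simple and admissible unless $a=\sqrt{2}b$, in which case $\lambda_1=-a^2$ by \eqref{ineq:lambda_1} must be discarded, the other two remaining admissible by \eqref{ineq:lambda_i}), of the first bullet via statement 1 of the theorem, and of the second bullet via statement 2 combined with Proposition \ref{prop:periodic_foci}, is exactly the intended specialization, including the appeal to Corollary \ref{cor:exact_nb_roots} for $N$. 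Where you genuinely add content is the third bullet, which the paper never justifies (neither here nor in the triangular analogue, Corollary \ref{cor_deux_caustiques}): your argument is sound, resting on the fact that a billiard orbit circumscribed about $\mathcal{C}_j$ is a Poncelet polygon for the pair $(\ellipse,\mathcal{C}_j)$ (this is Theorem \ref{main_theorem_caustics}, which applies because sides tangent to $\mathcal{C}_j$ are automatically non-isotropic and avoid the foci), that the Poncelet map is a translation of an elliptic curve whose order divides $4$, and that orders $1$ and $2$ are excluded. Two remarks on that step. Your exclusion of order $2$ via bi-normal chords (a $2$-periodic orbit lies on an axis, and each axis meets $\mathcal{C}_{\lambda}$ in two distinct points whenever $\lambda\notin\{-a^2,-b^2\}$) is correct, but heavier than necessary: for any two conics in general position a Poncelet digon is impossible outright, since it would require two distinct tangent lines of $\mathcal{C}_j$ through the same two distinct points of $\ellipse$. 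Also, order exactly $4$ only yields that every closed orbit circumscribed about $\mathcal{C}_j$ has length a multiple of $4$, i.e.\ it is the quadrilateral possibly traversed several times; your closing phrase ``the period is therefore exactly $4$'' glosses over this identification, though the paper's own wording carries the same ambiguity, so this is not a gap chargeable to you.
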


\begin{example}
We consider the case when $a=2$ and $b=1$, see Figure \ref{fig_3caustics}. We compute that
$$\lambda_3=-\lambda_1= \frac{4}{3}\approx1,333\quad\text{ and }\quad \lambda_2 = \frac{4}{5}= 0,8.$$
\end{example}

\begin{figure}
\centering
\begin{tikzpicture}[line cap=round,line join=round,>=triangle 45,x=1cm,y=1cm]
\clip(-3,-1.8) rectangle (3,1.8);
\draw [rotate around={0:(0.004465809016780549,0)},line width=1pt] (0.004465809016780549,0) ellipse (1.9955341909832196cm and 1.0008587655128343cm);
\draw [rotate around={0:(0.004465809016780692,0)},line width=1pt] (0.004465809016780692,0) ellipse (1.784232681737871cm and 0.45060828188388125cm);
\draw [rotate around={0:(0.004465809016780525,0)},line width=1pt] (0.004465809016780525,0) ellipse (2.2943056805364748cm and 1.5110923588129084cm);
\draw [samples=50,domain=-0.99:0.99,rotate around={0:(0.0044658090167803855,0)},xshift=0.0044658090167803855cm,yshift=0cm,line width=1pt] plot ({1.6329479850417183*(1+(\x)^2)/(1-(\x)^2)},{0.5602850319501439*2*(\x)/(1-(\x)^2)});
\draw [samples=50,domain=-0.99:0.99,rotate around={0:(0.0044658090167803855,0)},xshift=0.0044658090167803855cm,yshift=0cm,line width=1pt] plot ({1.6329479850417183*(-1-(\x)^2)/(1-(\x)^2)},{0.5602850319501439*(-2)*(\x)/(1-(\x)^2)});
\begin{scriptsize}
\draw[color=black] (-0.9777216831311774,1.02) node {$\ellipse$};
\draw[color=black] (-0.8726664469978024,0.57) node {$\mathcal{C}_2$};
\draw[color=black] (-1.1272233653209804,1.5) node {$\mathcal{C}_3$};
\draw[color=black] (2.7,0.9599478885188634) node {$\mathcal{C}_1$};
\end{scriptsize}
\end{tikzpicture}
\caption{The initial ellipse with its three caustics in the case when $n=4$, $a=2$ and $b=1$.}
\label{fig_3caustics}
\end{figure}
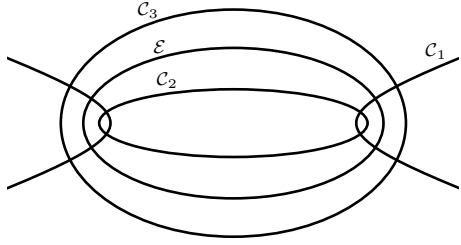

Let us investigate to which type of orbits corresponds each of the $\mathcal{C}_i$. To do so, we fix a point $S$ to be the vertex $(-a,0)$ of the initial ellipse $\ellipse$, and we want to determine the $4$-periodic orbits having $S$ as a vertex. One is already known: it is the \textit{flat} orbit whose vertices are on the foci-line and denoted by $T_0^4$, see Figure \ref{fig_flat_orbit}.

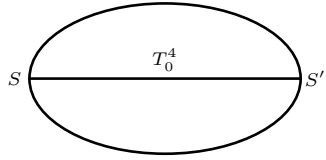
\begin{figure}[!h]
\centering
\begin{tikzpicture}[line cap=round,line join=round,>=triangle 45,x=0.5cm,y=0.5cm]
\clip(-4.5,-2.6) rectangle (4.5,2.6);
\draw [rotate around={0:(0,0)},line width=1pt] (0,0) ellipse (1.8028cm and 1cm);
\draw [line width=1pt] (-3.6055512754639882,0)-- (3.6055512754639882,0);
\begin{scriptsize}
\draw[color=black] (-4,0) node {$S$};
\draw[color=black] (4,0) node {$S'$};
\draw[color=black] (0,0.5) node {$T_0^4$};
\end{scriptsize}
\end{tikzpicture}
\caption{The flat $4$-periodic orbit $T_0^4$}
\label{fig_flat_orbit}
\end{figure}

By Theorem \ref{main_theorem_orbits}, each one of the other orbits is tangent to a $\mathcal{C}_i$: denote by $T^4_i$ the $4$-periodic orbit tangent to $\mathcal{C}_i$, where $i=1,2,3$. Let $v_i=(v_{x,i}, v_{y,i})$ be a vector directing a side of $T_i^4$ starting from $S$, and such that $q(v_i)=1$. By Proposition \ref{prop_tangency_billiard}, one has 
\begin{equation}
\label{equ:v_from_lambda}
\lambda_i=-(ab)^2P(T_i^4) = -b^2v_{x,i}^2.
\end{equation}
Thus we can determine $v_i$ for each $i$.\\

\noindent\underline{\textbf{Case $i=1$:}} We compute that
$$v_{1} = \frac{1}{\sqrt{a^2-b^2}}(\pm a,\pm ib).$$
A line passing through $S$ and directed by one of the solution for $v_2$ intersects the ellipse in $S$ and a point at infinity $M_{\pm}=(a:\pm ib:0)$. We get the orbit $(S,M_+,S',M_-)$ where $S'=-S$, see Figure \ref{fig_infinite_4_orbit}.

\begin{figure}[!h]
\centering
\definecolor{cqcqcq}{rgb}{0.7529411764705882,0.7529411764705882,0.7529411764705882}
\begin{tikzpicture}[line cap=round,line join=round,>=triangle 45,x=1cm,y=1cm]
\clip(-3,-2) rectangle (3,2);
\draw [samples=50,domain=-0.99:0.99,rotate around={0:(0,0)},xshift=0cm,yshift=0cm,line width=1pt] plot ({1.3694832979641993*(1+(\x)^2)/(1-(\x)^2)},{1.4575717809415425*2*(\x)/(1-(\x)^2)});
\draw [samples=50,domain=-0.99:0.99,rotate around={0:(0,0)},xshift=0cm,yshift=0cm,line width=1pt] plot ({1.3694832979641993*(-1-(\x)^2)/(1-(\x)^2)},{1.4575717809415425*(-2)*(\x)/(1-(\x)^2)});
\draw [line width=1pt,domain=-10.057909801527712:-1.3695961860964068] plot(\x,{(--8.149010140242552--6.024969698131765*\x)/-5.490945816698725});
\draw [line width=1pt,domain=-10.057909801527712:-1.3695961860964068] plot(\x,{(-8.345933921333584-6.0174292086845655*\x)/-5.582872525911064});
\draw [line width=1pt,domain=1.3696656621142531:11.063769561650973] plot(\x,{(-8.41061568482461--6.044491052779263*\x)/5.535817039638287});
\draw [line width=1pt,domain=1.3696656621142531:11.063769561650973] plot(\x,{(--8.052514052191567-5.974926176733984*\x)/5.512835362335203});
\begin{scriptsize}
\draw[color=black] (1.6,1.3) node {$\ellipse$};
\draw[color=black] (-1.05,0) node {$S$};
\draw[color=black] (1.05,0) node {$S'$};
\end{scriptsize}
\end{tikzpicture}
\caption{The infinite $4$-periodic orbit $T_1^4$. A change of coordinates $y\to iy$ to represent the the orbit properly ; this operation changes $\ellipse$ into a hyperbola. The infinite points of the orbit, $M_+$ and $M_-$, lie "at the end" of each branch of the hyperbola.}
\label{fig_infinite_4_orbit}
\end{figure}
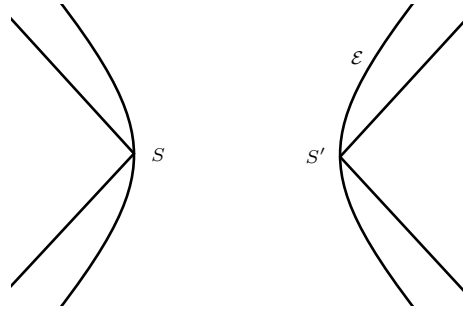

\noindent\underline{\textbf{Case $i=2$:}} By Equation \eqref{equ:v_from_lambda} and $q(v_2)=1$ we get 
$$v_{2} = \frac{1}{\sqrt{a^2+b^2}}(\pm a,\pm b).$$
It corresponds to the well-known real orbit $(S,P,S',P')$ where $P=(0,b)$, $P'=-P$ and $S'=-S$, see Figure \ref{fig_classical_4_orbit}. 

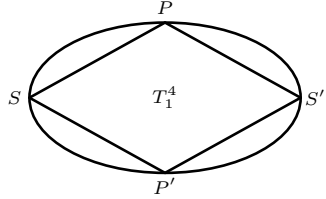
\begin{figure}[!h]
\centering
\begin{tikzpicture}[line cap=round,line join=round,>=triangle 45,x=0.5cm,y=0.5cm]
\clip(-4.5,-2.6) rectangle (4.5,2.6);
\draw [rotate around={0:(0,0)},line width=1pt] (0,0) ellipse (1.8028cm and 1cm);
\draw [line width=1pt] (-3.6055512754639882,0)-- (0,2);
\draw [line width=1pt] (3.6055512754639882,0)-- (0,2);
\draw [line width=1pt] (0,-2)-- (3.6055512754639882,0);
\draw [line width=1pt] (-3.6055512754639882,0)-- (0,-2);
\begin{scriptsize}
\draw[color=black] (-0,2.4) node {$P$};
\draw[color=black] (-4,0) node {$S$};
\draw[color=black] (4,0) node {$S'$};
\draw[color=black] (0,-2.35) node {$P'$};
\draw[color=black] (0,0) node {$T_1^4$};
\end{scriptsize}
\end{tikzpicture}
\caption{The $4$-periodic orbit $T_2^4$.}
\label{fig_classical_4_orbit}
\end{figure}

\noindent\underline{\textbf{Case $i=3$:}} Here we have
$$v_{3} = \frac{1}{\sqrt{a^2-b^2}}(\pm a,\pm i\sqrt{2a^2-b^2}).$$
A line passing through $S$ and directed by one of the solutions for $v_3$ intersects the ellipse in $S$ and in one of the points 
$$N_{\pm}=\frac{1}{a^2-b^2}(-a^3,\pm ib\sqrt{2a^2-b^2})$$
depending on the signs we choose for $v_3$'s coordinates.

The question is: why are the points $N_{\pm}$ not on the $x$- or $y$-axis by symmetry of the $4$-periodic orbit ? The reason is that the line passing through $S$ and directed by $v_3$ is reflected in the same line at one of the points $N_{\pm}$. Indeed, the system of equations in $v$
$$P(N_+,v)=P(T_3^4) \qquad\text{ and }\qquad q(v)=1$$
(see Lemma \ref{lemma_carac_billiard_2}) has a unique solution (up to multiplication by $-1$), which is in fact $v=v_3$. Thus the corresponding orbit is $(S,N_+,S,N_-)$, see Figure \ref{fig_complex_4_orbit}.

\begin{remark}
Observe that the points $N_{\pm}$ belong to $\ellipse$ and to $\mathcal{C}_3$. Indeed, $SN_+$ and $N_+S$ realize the two tangent lines to $\mathcal{C}_3$ (by Theorem \ref{main_theorem_caustics}), thus there is only one such tangent line and therefore $N_+\in\mathcal{C}_3$. The same argument is true with $N_-$.
\end{remark}

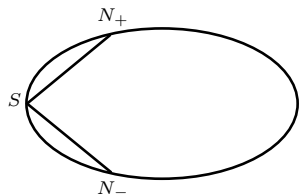
\begin{figure}[!h]
\centering
\definecolor{cqcqcq}{rgb}{0.7529411764705882,0.7529411764705882,0.7529411764705882}
\begin{tikzpicture}[line cap=round,line join=round,>=triangle 45,x=0.5cm,y=0.5cm]
\clip(-4.5,-2.6) rectangle (4.5,2.6);
\draw [rotate around={0:(0,0)},line width=1pt] (0,0) ellipse (1.8028cm and 1cm);
\draw [line width=1pt] (-3.588510623485942,0)-- (-1.3873680477621981,1.8159986473652563);
\draw [line width=1pt] (-3.588510623485942,0)-- (-1.3615197675189206,-1.8218796799437253);
\begin{scriptsize}
\draw[color=black] (-3.925012735702111,0.11464880614245819) node {$S$};
\draw[color=black] (-1.3,2.3) node {$N_+$};
\draw[color=black] (-1.3,-2.3) node {$N_-$};
\end{scriptsize}
\end{tikzpicture}
\caption{The $4$-periodic orbit $T_3^4$. Its edges $SN_+$ and $SN_-$ are reflected into themselves in $N_+$ and $N_-$ respectively. Here the drawing is biased, because we cannot represent $T_3^4$ which has complex vertices.}
\label{fig_complex_4_orbit}
\end{figure}

\newpage

\end{document}